\documentclass[10pt]{amsart}
\usepackage{cases}
\usepackage[utf8]{inputenc}
\usepackage{amsmath,amsthm,amssymb,amscd}
\usepackage{stmaryrd}
\usepackage[all,cmtip]{xy}
\usepackage{mathrsfs}
\usepackage{color}
\usepackage{amsmath}
\usepackage{amsxtra}
\usepackage{amscd}
\usepackage{amsthm}
\usepackage{amsfonts}
\usepackage{amssymb}
\usepackage{tikz-cd}
\usepackage{bbm}
\usepackage{hyperref}
\usepackage[mathscr]{eucal}
\usepackage[english]{babel}
\usepackage[autostyle]{csquotes}
\usepackage{mathtools}

\numberwithin{equation}{section}
\newtheorem{theorem}[equation]{Theorem}
\newtheorem{thm}{Theorem}
\theoremstyle{plain}
\newtheorem{lemma}[equation]{Lemma}
\newtheorem{proposition}[equation]{Proposition}
\newtheorem{definition}[equation]{Definition}
\newtheorem{corollary}[equation]{Corollary}

\newtheorem*{corollary*}{Corollary}

\newtheorem{remark}{Remark}

\newenvironment{myproof}[2] {\emph{Proof of {#1} {#2}.}}{\hfill$\square$}

\def\GL{\mathrm{GL}}
\def\SL{\mathrm{SL}}

\def\GSpin{\mathrm{GSpin}}
\def\SU{\mathrm{SU}}
\def\Spin{\mathrm{Spin}}

\def\GU{\mathrm{GU}}

\def\Nilp{(\mathrm{Nilp})}
\def\det{\mathrm{det}}
\def\Lie{\mathrm{Lie}}
\def\inv{\mathrm{inv}}

\DeclareMathOperator{\End}{End}
\DeclareMathOperator{\chara}{char}
\DeclareMathOperator{\Adm}{Adm}

\def\calM{\mathcal{M}}
\def\calN{\mathcal{N}}
\def\calO{\mathcal{O}}

\def\gothS{\mathfrak{S}}

\def\AAA{\mathbb{A}}

\def\CC{\mathbb{C}}
\def\DD{\mathbb{D}}
\def\FF{\mathbb{F}}
\def\GG{\mathbb{G}}

\def\PP{\mathbb{P}}
\def\QQ{\mathbb{Q}}

\def\XX{\mathbb{X}}
\def\ZZ{\mathbb{Z}}

\def\Sh{\gothS h}
\def\Shim{\mathrm{Sh}}
\def\Dieu{Dieudonn\'{e} module}

\newcommand{\length}{\mathrm{length}}
\newcommand{\vol}{\mathrm{vol}}

\newcommand{\Spf}[1]{\mathrm{Spf} (#1)}

\setlength{\textwidth}{460pt} \setlength{\oddsidemargin}{0pt}
\setlength{\evensidemargin}{0pt} \setlength{\topmargin}{0pt}
\setlength{\textheight}{620pt}
\setlength{\parskip}{0.1cm}
\usepackage[top=1.5in, bottom=1.5in, left=1.2in, right=1.2in]{geometry}
\setcounter{tocdepth}{1}

\address{\parbox{\linewidth} {Haining Wang,\\ Department of Mathematics,\\ McGill University,\\ 805 Sherbrooke St W,\\ Montreal, QC H3A 0B9, Canada.~ }}
\email{wanghaining1121@outlook.com}

\subjclass[2000]{Primary 11G18, Secondary 20G25}
\date{\today}
\begin{document}

\title[On GU(2,2) type Rapoport-Zink spaces]{On the Bruhat-Tits stratification for GU(2,2) type Rapoport-Zink space: unramified case}

\author{Haining Wang}
\keywords{\emph{Shimura varieties, Bruhat-Tits building, affine Deligne-Lusztig varieties}}
\maketitle 
\begin{abstract}
In this note we study the supersingular locus of the $\GU(2,2)$ Shimura variety modulo a prime which is unramified in the imaginary quadratic extension. The supersingular locus of this Shimura variety can be related to the basic Rapoport-Zink space whose special fibre is described by the Bruhat-Tits stratification. The description for this supersingular locus in the case where the prime is inert in imaginary quadratic field is already known to Howard and Pappas by exploiting the exceptional isomorphism. Our method is more direct without using the exceptional isomorphism.
\end{abstract}
\tableofcontents

\section{Introduction}
\subsection{Motivation} In this note we study the supersingular locus of the $\GU(2,2)$ type Shimura variety at an odd prime $p$ which is unramified in the imaginary quadratic field. When $p$ is odd and inert in the corresponding imaginary quadratic field, the results in this note are already known to Howard and Pappas \cite{HP14}. Their method exploits the exceptional isomorphism between $\SU(2,2)$ and $\Spin(4,2)$. In this note, we show that one can work directly with the unitary group. For the $\GU(2,2)$ type Shimura variety, the supersingular locus agrees with the so-called basic locus for the Newton stratification and thus the problem of describing the supersingular locus is equivalent to the problem of describing the underlying reduced scheme of the basic Rapoport-Zink space according to the Rapoport-Zink uniformizaiton theorem. Our description is via the so-called \emph{Bruhat-Tits stratification}. This terminology comes from the pioneering work of \cite{Vol-can10} and \cite{VW-invent11} where the supersingular locus of the $\GU(1, n-1)$ type Shimura variety is studied. The Bruhat-Tits stratification roughly speaking comes from classifying the relative position between the lattices coming from the Dieudonn\'{e} modules of the $p$-divisible groups parametrized by the Rapoport-Zink space and the lattices representing the faces of the Bruhat-Tits building of the automorphism group of the isocrystal with additional structures associated to the base point of the Rapoport-Zink space. Our direct approach has certain advantage over the approach via the exceptional isomorphism: one can expect to use the results in this article to describe the $\GU(2,2)$ type Rapoport-Zink spaces with parahoric level structures using a similar approach as in \cite{Wang-2}. On the other hand, the $\GSpin$ type Rapoport-Zink space used by Howard and Pappas is of Hodge type and the construction of the Rapoport-Zink space with parahoric level structure is not constructed yet.  Our approach is also modelled over the group theoretic approach of \cite{GH-Cam15} and can be seen as a translation of their result from the equi-characteristic case to the present mixed characteristic case. 

\subsection{Integral model of Shimura variety}\label{integral-model} Let $E$ be an imaginary quadratic extension of $\QQ$ with ring of integers $\mathcal{O}_{E}$ and let $*$ be the non-trivial automorphism of $E$ over $\QQ$. Let $V$ be a Hermitian space with Hermitian form $(\cdot, \cdot)$ over $E$ of signature $(2,2)$. We denote by $G=\GU(V)$ the group of unitary similitudes of $V$.  We consider the Hodge cocharacter $h: \GG_{m}\rightarrow G(\CC)$ sending $z\in \CC^{\times}$ to $\text{diag}(z, z, 1, 1)$.

We assume there is an $\calO_{E}$ lattice $\Lambda\subset V$ which is selfdual with respect to $(\cdot,\cdot )$. 
Note $h$ defines a decomposition of $V_{\CC}=V_{1}\oplus V_{2}$ where $V_{1}$ is the subspace of $V$ that $h(z)$ acts  by $z$ and $V_{2}$ is the subspace of $V$ that $h(z)$ acts  by $\bar{z}$. We fix an open compact subgroup $U^{p}$ of $G(\AAA^{p}_{f})$ that we assume is sufficiently small and we define $U_{p}= G(\ZZ_{p})$. Finally we let $U=U_{p}U^{p}$. 

Given the PEL datum $(E, *, V, (\cdot,\cdot), h, \Lambda)$ cf.\cite{Ko92}, we consider the following moduli problem $\Sh_{U^{p}}$ over $\ZZ_{(p)}$. For a scheme $S$ over $\ZZ_{(p)}$,   $\gothS h_{U^{p}}(S)$ classifies the set of isomorphism classes of the quadruple $(A, \iota , \lambda, \eta )$ where
\begin{itemize}
\item[-]  $A$ is an abelian scheme of relative dimension $4$ over $S$;
\item[-]  $\lambda: A\rightarrow A^{\vee}$ is a polarization which is $\mathcal{O}_{E}$-linear and of degree prime to $p$;
\item[-] $\iota: \calO_{E} \rightarrow \End_{S}(A)$ is a morphism such that $\lambda\circ \iota(a^{*})\circ\lambda^{-1}= \iota(a)^{\vee}$ and satisfies the Kottwitz condition \begin{equation}\label{Kowtt}\det(\iota(a); \Lie(A))=\det(a; V_{1})\end{equation} for all $a\in \calO_{E}$. Note this equality has to be interpreted as the way explained in \cite[page 390]{Ko92};
\item[-] $\eta: V\otimes_{\QQ} {\AAA}^{(p)}_{f} \rightarrow V^{(p)}(A)$ is a $U^{p}$-orbit of $E \otimes_{\QQ}\AAA^{(p)}_{f}$-linear isomorphisms which respects the Weil-pairing on the righthand side and the form $(\cdot,\cdot)$ on the lefthand side. Here $V^{(p)}(A)=\prod_{p^{\prime}\neq p}T_{p^{\prime}}(A)\otimes \AAA^{(p)}_{f}$ is the prime to $p$-part of the rational Tate module of $A$. 
\end{itemize}

Let $\FF$ be an algebraically closed field containing $\FF_{p}$. In this note we are concerned with the structure of  the supersingular locus $\Shim^{ss}_{U^{p}}$ in $\Sh_{U^{p}}\otimes \FF$ considered as a reduced closed subscheme. The structure depends on whether $p$ is split or $p$ is inert in $E$. Indeed, our starting point of analyzing the supersingular locus is the uniformization theorem of Rapoport-Zink \cite[ Theorem 6.1]{RZ-Aoms} which takes the following form
\begin{equation}\label{RZ-uniform}
\alpha: I(\QQ)\backslash \calN_{\text{red}}\times G(\AAA^{p}_{f})/ U^{p} \xrightarrow{\sim  }  \Shim^{ss}_{U^{p}}. 
\end{equation}
Here $ \calN_{\text{red}}$ is the underlying reduced scheme of a Rapoport-Zink space $\calN$ and $I$ is an inner form of $G$. When $p$ is split in $E$, then the Rapoport-Zink space is of EL type and when $p$ is inert in $E$, then the Rapoport-Zink space is of PEL type. We will discuss the two cases separately in the next two subsections. First, we fix a base point $(\bf{A},\boldsymbol{\iota}, \boldsymbol{\lambda},\boldsymbol{ \eta})$ over $\FF$ in $\Shim^{ss}_{U^{p}}$. Let $E_{p}=E\otimes \QQ_{p}$.  We pass $\bf{A}$ to its associated $p$-divisible group $(\XX, \iota, \lambda)$ with  $\iota: E_{p}\rightarrow \End(\XX)_{\QQ}$ and $\lambda: \XX\rightarrow \XX^{\vee}$ induced by the above PEL structure on $\bf{A}$. Let $W_{0}=W(\FF)$ be the Witt ring of $\FF$ and let $K_{0}=W(\FF)_{\QQ}$ be the fraction field of $W_{0}$. 

\subsection{EL type}\label{EL-case}  In the case $p$ is split, $E_{p}=\QQ_{p}\times \QQ_{p}$ and $\XX=\XX_{0}\times \XX_{1}$. Let $\Nilp$ be the category of $W_{0}$-schemes S, with $p$ locally nilpotent on $S$. Then we are led to consider the Rapoport-Zink space $\calN$ which is the set valued functor that associates a scheme $S\in\Nilp$ the set of isomorphism classes of the pair $(X,  \rho_{X})$ where 
\begin{itemize}
\item[-] $X$ is a two dimensional $p$-divisible group over $S$;
\item[-] $\rho_{X}: X\times \bar{S}\rightarrow \XX_{0}\times \bar{S} $ is a quasi-isogeny over $\bar{S}$ which is the locus of $S$ where $p=0$.
\end{itemize}

This functor is representable by a smooth formal scheme $\calN$ over $\Spf{W_{0}}$. The formal scheme $\calN$ decomposes into connected components $\calN=\bigsqcup_{i\in \ZZ}\calN(i)$ where $\calN(i)$ is the sub formal scheme given by the condition that $\rho_{X}$ has height $i$. Then the underlying reduced scheme $\calN_{red}(0)$ of $\calN(0)$ can be described by the so called \emph{Bruhat-Tits stratification}. For the $p$-divisible group $\XX_{0}$, we consider its automorphism group $J_{b}$ whose derived group is isomorphic to $\SL_{B_{p}}(2)$ for the quaternion division algebra $B_{p}$ over $\QQ_{p}$. The Bruhat-Tits building of this group over $\QQ_{p}$ is a tree with two kinds of vertices correspond to the nodes $\{1, 3\}$ and $\{0, 2\}$ in the affine Dynkin diagram of type $\widetilde{A}_{3}$. They correspond to two different kinds of strata in the Burhat-Tits stratification. Our main result about this Rapoport-Zink space is the following theorem which one can find a more precise version in Theorem \ref{main-p-split}.

\begin{thm}\label{main-p-split-intro}
 Let $\calM=\calN_{red}(0)$. We have a stratification $$\calM=\calM^{\circ}_{\{1,3\}}\sqcup \calM_{\{0,2\}}$$ called the Bruhat-Tits stratification. The subscheme $\calM^{\circ}_{\{1,3\}}$ is open and dense in $\calM$ and $\calM_{\{0,2\}}$ is the set of superspecial points. The irreducible components of $\calM$ are isomorphic to $\PP^{1}$.
\end{thm}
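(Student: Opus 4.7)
The plan is to adapt the group-theoretic approach of G\"ortz-He \cite{GH-Cam15} and the lattice-theoretic approach of Vollaard-Wedhorn \cite{VW-invent11} to the present EL setting, translating the moduli problem into lattice combinatorics in the isocrystal of $\XX_{0}$ and then reading off the stratification from the Bruhat-Tits building of $J_{b}$.

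First, covariant Dieudonn\'e theory identifies $\calM(\FF)$ with the set of $W_{0}$-lattices $M$ in the rational Dieudonn\'e module $N$ of $\XX_{0}$ satisfying $pM \subset FM \subset M$, $\dim_{\FF}(M/FM) = 2$, and the height-zero normalization defining $\calN(0)$. Since $\XX_{0}$ is supersingular, $N$ is isoclinic of slope $1/2$; the subspace $\mathbb{V} := \{x \in N : F^{2}x = px\}$ is then a canonical four-dimensional $\QQ_{p^{2}}$-rational form of $N$ on which the $\sigma$-semilinear operator $F$ acts as a square root of $p$, making $\mathbb{V}$ into a rank-two right module over the quaternion division algebra $B_{p}$ generated by $\QQ_{p^{2}}$ and $F$. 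A standard descent argument then identifies admissible Dieudonn\'e lattices in $N$ with certain $\calO_{B_{p}}$-stable lattices in $\mathbb{V}$.

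Next, I would interpret this lattice data via the Bruhat-Tits building $\calB(J_{b}, \QQ_{p})$ of $J_{b}(\QQ_{p}) = \GL_{2}(B_{p})$. This building is a tree with two types of vertices, labelled by the node subsets $\{0,2\}$ and $\{1,3\}$ of $\widetilde{A}_{3}$ when $\SL_{2}(B_{p})$ is viewed as an inner form of $\SL_{4}$. Under the correspondence above, each vertex $v$ controls a set of admissible Dieudonn\'e lattices in a prescribed relative position: at a type $\{0,2\}$ vertex the set reduces to a single rigid Dieudonn\'e lattice, giving an isolated superspecial point; at a type $\{1,3\}$ vertex $v$ the admissible lattices are parametrized by a line in an explicit two-dimensional $\FF$-vector space attached to $v$, producing a family indexed by $\PP^{1}(\FF)$.

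Finally, I would upgrade this set-theoretic bijection to a scheme-theoretic stratification. For each type $\{1,3\}$ vertex $v$ I would realize the family above as a closed subscheme $\calM_{v} \hookrightarrow \calM$ and show $\calM_{v} \cong \PP^{1}$; the $\calM_{v}$ are then the irreducible components of $\calM$, while the type $\{0,2\}$ vertices correspond to their pairwise intersections and form the superspecial locus $\calM_{\{0,2\}}$. The open stratum $\calM^{\circ}_{\{1,3\}} = \bigsqcup_{v}(\calM_{v} \setminus \calM_{\{0,2\}})$ is then open and dense in $\calM$ by construction. The main technical obstacle is the scheme-theoretic identification $\calM_{v} \cong \PP^{1}$: I would handle this either by a direct deformation calculation using Grothendieck-Messing theory for the universal $p$-divisible group over $\calM_{v}$, or by realizing $\calM_{v}$ as a closed affine Deligne-Lusztig variety inside a partial affine flag variety and invoking the structure results of \cite{GH-Cam15}.
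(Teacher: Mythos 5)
Your overall strategy is sound and lines up with the paper's Section~2: pass to covariant Dieudonn\'e modules, introduce the rational form $C_{0}=N_{0}^{\tau=1}$ (your $\mathbb{V}$) with its $\calO_{B_{p}}$-structure, identify $J_{b}(\QQ_{p})\cong\GL_{B_{p}}(C_{0})\cong\GL_{2}(B_{p})$, read off the tree, and exhibit each one-dimensional stratum as $\PP(L_{W_{0}}/\Pi L_{W_{0}})\cong\PP^{1}$. Your final step, upgrading the bijection to a scheme isomorphism, is also handled plausibly --- the paper uses projectivity of the lattice strata (deduced from \cite[Prop.\,2.9, Cor.\,2.29]{RZ-Aoms}) plus Zariski's main theorem, which is close in spirit to your Grothendieck-Messing alternative; your suggestion of instead invoking the affine Deligne-Lusztig description from \cite{GH-Cam15} is precisely what the paper does in its final section as a comparison, so that route works too.

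The genuine gap is in your second step, specifically the sentence ``A standard descent argument then identifies admissible Dieudonn\'e lattices in $N$ with certain $\calO_{B_{p}}$-stable lattices in $\mathbb{V}$.'' This is false as stated: a general $M\in\calM(\FF)$ is \emph{not} $\tau$-stable, hence does not descend to a lattice in $C_{0}$, and there is no tautological way to attach a vertex of the building to it. What you actually need --- and what carries the real combinatorial content of the theorem --- is the paper's Proposition~\ref{crucial_lemma_split}: for every admissible $M$ there exists a $\tau$-stable lattice $L_{W_{0}}$ with $\Pi L_{W_{0}}\subset M\subset L_{W_{0}}$. The proof splits according to the $a$-number $a(M)=\dim_{\FF}M/(FM+VM)$. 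If $a(M)=2$, $M$ is superspecial, hence $\tau$-stable, and one takes $L_{W_{0}}=M$. If $a(M)=1$, the candidate is $L_{W_{0}}=M+\tau(M)$, and verifying that this is $\tau$-stable requires the Norman-Oort identity $F^{2}M+pM=FM\cap VM=V^{2}M+pM$ from \cite[(11b)--(11d)]{NO-Ann80} (Lemma~\ref{NO_lemma}). Without this input, your claim that ``each vertex $v$ controls a set of admissible Dieudonn\'e lattices'' establishes containment of the lattice strata in $\calM$, but not that they \emph{cover} $\calM$, so the stated stratification $\calM=\calM^{\circ}_{\{1,3\}}\sqcup\calM_{\{0,2\}}$ is not yet proved. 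You should replace the ``standard descent'' phrase by this $a$-number argument (or an equivalent).

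One smaller point of presentation: in the paper's bookkeeping a ``vertex lattice'' is any $\calO_{B_{p}}$-lattice $L$, and the strata $\calM_{L}\cong\PP^{1}$ arise from those $L$ with $\vol(L_{W_{0}})=1$, while the superspecial points are recovered as $\FF_{p^{2}}$-rational points of these $\PP^{1}$'s, equivalently as intersections $\calM_{L}\cap\calM_{L'}$ of adjacent strata. Your description (type $\{1,3\}$ vertices give $\PP^{1}$'s, type $\{0,2\}$ vertices give isolated superspecial points) is an equivalent and arguably cleaner framing, but you should reconcile the two indexings explicitly, for example by observing that a superspecial $M$ is itself of the form $L''_{W_{0}}$ for a $\tau$-stable lattice $L''$ of the complementary parity.
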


\subsection{PEL type}\label{PEL-case} In the case $p$ is inert, then $E_{p}=\QQ_{p^{2}}$ and we denote by $\psi_{0}, \psi_{1}$ the two embeddings of $\QQ_{p^{2}}$ in $K_{0}$. We consider the following set valued functor $\calN$ on $\Nilp$. To a scheme $S\in\Nilp$, $\calN$ classifies the set of isomorphism classes of the quadruple $(X,\lambda_{X}, \iota_{X},  \rho_{X} )$ where 
\begin{itemize}
\item[-] $X$ is a two dimensional $p$-divisible group over $S$;
\item[-] $\lambda_{X}: X\rightarrow X^{\vee}$ is a prime to $p$ polarization over $S$;
\item[-] $\iota_{X}: \ZZ_{p^{2}}\rightarrow \End_{S}(X)$ satisfies $$\det(T-\iota(a); \Lie(X))= (T-\psi_{0}(a))^{2}(T-\psi_{1}(a))^{2}$$ in $\calO_{S}[T]$;
\item[-] $\rho_{X}: X\times \bar{S}\rightarrow \XX\times \bar{S} $ is a quasi-isogeny where $\bar{S}$ is the locus of $S$ where $p=0$.
\end{itemize}

This moduli space is representable by a smooth formal scheme $\calN$. The formal scheme $\calN$ can be decomposed into $\calN=\bigsqcup_{i\in\ZZ}\calN(i)$ where the open and closed sub formal scheme $\calN(i)$ is isomorphic to $\calN(0)$. Denote by $\calM=\calN_{red}(0)$ the underlying reduced scheme of $\calN(0)$. Then our main result concerns the structure of $\calM$. We again consider the group $J_{b}$ of automorphisms of $\XX$ that preserve the additional structures.  This group is isomorphic to a different unitary group of degree $4$.  The Bruhat-Tits building of the derived group of $J_{b}$ is obtained as the fixed point of the Bruhat-Tits building of the group $\SL(4)$ over $K_{0}$ under the Frobenius action. This action on the affine Dynkin diagram of type $\widetilde{A}_{3}$ fixes the nodes $1,3$ and exchanges the nodes $0$ and $2$. If we label the vertices of the base alcove of the building of $\SL(4)$ over $K_{0}$ according to the nodes on the affine Dynkin diagram of type $\widetilde{A}_{3}$, then the strata of the Bruhat-Tits stratification correspond to the vertices of the building of $J_{b}$ over $\QQ_{p}$ attached to the nodes $1$, $3$ and $\{0,2\}$ as well as the edge corresponding to $\{1,3\}$. The main result is the following and one can find more details in Theorem \ref{main-result-inert}.

\begin{thm}\label{main-p-inert-intro}
The scheme $\calM=\calN_{red}(0)$ is pure of dimension $2$ and admits the Bruhat-Tits stratification $$\calM=\calM^{\circ}_{\{1\}}\sqcup \calM^{\circ}_{\{3\}} \sqcup \calM^{\circ}_{\{1,3\}}\sqcup \calM_{\{2\}}.$$ 
The irreducible components of the closures of $\calM^{\circ}_{\{1\}}$ and $\calM^{\circ}_{\{3\}}$ are isomorphic to the Fermat hypersurface $x^{p+1}_{0}+x^{p+1}_{1}+x^{p+1}_{2}+x^{p+1}_{3}=0$.  The irreducible components of the closure of $\calM^{\circ}_{\{1,3\}}$ are isomorphic to $\PP^{1}$ and $\calM_{\{2\}}$ consists of superspecial points.
\end{thm}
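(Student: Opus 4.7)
The plan is to pass to covariant Dieudonn\'{e} theory and present $\calM$ as the underlying reduced scheme of an affine Deligne-Lusztig variety for $G=\GU(2,2)$, then stratify it by the positions of lattices in the Bruhat-Tits building of $J_b$, following the template of \cite{Vol-can10,VW-invent11} and the group-theoretic approach of \cite{GH-Cam15}.

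First I would set up the lattice description. Let $N$ be the rational covariant Dieudonn\'{e} module of $\XX$, equipped with its $\sigma$-linear Frobenius $F$, the perfect alternating form $\langle\cdot,\cdot\rangle$ induced by $\lambda_{\XX}$, and the $\ZZ_{p^2}$-action. A point of $\calN(0)(\FF)$ corresponds to a $W_0$-lattice $M\subset N$ of the correct volume such that $pM\subset FM\subset M$, $M$ is self-dual for $\langle\cdot,\cdot\rangle$, $M$ is stable under $\ZZ_{p^2}$, and the induced grading $M/FM=(M/FM)_{0}\oplus(M/FM)_{1}$ along $\psi_0,\psi_1$ has both summands of dimension $2$ over $\FF$ (the Kottwitz condition). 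On the Frobenius-fixed space $\rmT=N^{F^{2}=p}$ the polarization restricts to a Hermitian form over $\QQ_{p^2}/\QQ_p$ and $J_b$ is identified with its unitary similitude group, so the Bruhat-Tits building of its derived group is the Frobenius-fixed subcomplex of the building of $\SL(N)$ over $K_0$, whose $\widetilde{A}_3$ combinatorics yields the vertex types $\{1\},\{3\},\{0,2\}$ and the edge type $\{1,3\}$ listed in the theorem.

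Next I would define the stratification by attaching to each lattice $M$ the unique minimal facet of the building of $J_b$ whose parahoric stabilizer contains the stabilizer of $M$; this partitions $\calM$ into $\calM^\circ_{\{1\}}\sqcup\calM^\circ_{\{3\}}\sqcup\calM^\circ_{\{1,3\}}\sqcup\calM_{\{2\}}$. For a vertex $v$ of type $\{1\}$ or $\{3\}$, the closed stratum $\overline{\calM^\circ_{\{v\}}}$ parametrizes lattices $M$ contained in the self-dual lattice chain determined by $v$, and by translating the Hodge filtration one recognises it as the Deligne-Lusztig variety inside the flag variety of the reductive quotient of the corresponding parahoric (a form of $\SU(4)$ over $\FF_p$) attached to the Coxeter element of the Weyl group. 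A classical calculation then identifies this Deligne-Lusztig variety with the Fermat surface $x_{0}^{p+1}+x_{1}^{p+1}+x_{2}^{p+1}+x_{3}^{p+1}=0$ in $\PP^3_{\FF}$. The closure $\overline{\calM^\circ_{\{1,3\}}}$ is similarly realised as the Deligne-Lusztig variety for a maximal proper parabolic in $\SU(4)$, which is $\PP^1$, while $\calM_{\{2\}}$ is the finite set of superspecial points. Because incidence of simplices in the building translates directly into incidence of strata, one obtains purity of dimension $2$ and the claimed closure relations.

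The main obstacle will be the explicit identification of the top-dimensional strata with the Fermat hypersurfaces: one must choose a basis of $\rmT$ adapted to the vertex $v$, describe the functor of points of $\overline{\calM^\circ_{\{v\}}}$ on an affine chart, and translate the semilinear Verschiebung condition together with the unitary constraint into the explicit Fermat equation. Since \cite{GH-Cam15} works in equi-characteristic, this calculation does not transfer verbatim but must be redone in mixed characteristic, and will occupy the bulk of the technical work.
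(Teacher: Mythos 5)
Your proposal follows the same overall route as the paper: pass to covariant Dieudonn\'{e} modules, identify $J_{b}$ with a unitary (similitude) group over $\QQ_{p}$, use vertex lattices in the building of $J_{b}^{\mathrm{der}}$ to index closed strata, and identify these with classical Deligne--Lusztig varieties for $\mathrm{U}(4)$, obtaining the Fermat surface and $\PP^{1}$. The Coxeter-type ADLV picture from \cite{GH-Cam15} that you invoke as motivation also appears in the paper, but only as an after-the-fact comparison in the final section; the actual proof runs through explicit lattice manipulations.

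However, you misjudge where the effort lies. The explicit Fermat computation you flag as the main obstacle is essentially standard once one has the isomorphism of the closed lattice stratum with the variety $Y^{(\pm)}_{V(L)}$ of isotropic lines/hyperplanes in the Hermitian $\FF_{p^{2}}$-space $V(L)$; the Fermat equation is classical for unitary DL varieties of this size. The two steps that actually require work and that your sketch glosses over are: (i) showing that every Dieudonn\'{e} lattice $D$ with $pD^{\vee}\subset^{2}D\subset^{2}D^{\vee}$ is captured by a vertex lattice. You propose to use ``the unique minimal facet whose parahoric stabilizer contains the stabilizer of $M$,'' but it is not a priori clear that this facet produces a vertex lattice of the right type fitting $D$ into a chain $pL^{\vee}\subset pD^{\vee}\subset D\subset L$ or its dual; the paper instead establishes this constructively in Proposition~\ref{chain1}, using the bound $\dim(D+\tau(D))/D\le 1$ (Lemma~\ref{Pappas}) and a case analysis on whether $D+\tau(D)$ or $D\cap\tau(D)$ is $\tau$-stable, with an inductive argument on $L_{j}(D)=D+\tau(D)+\cdots+\tau^{j}(D)$. (ii) Upgrading the pointwise bijections $\calM_{L}(\FF)\leftrightarrow Y^{(\pm)}_{V(L)}(\FF)$ to scheme isomorphisms. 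This requires defining $\calM_{L}$ as a subfunctor via the ``isogeny trick'' (the auxiliary $p$-divisible groups $\XX_{L^{\pm}}$ and the requirement that $\rho_{X,L^{\pm}}$ be true isogenies), proving representability and projectivity via \cite[Prop.\ 2.9, Cor.\ 2.29]{RZ-Aoms}, constructing the comparison morphism $f$ from the crystal $\DD(X)(R)$, and concluding by Zariski's main theorem since the DL variety is normal and projective. Neither step is automatic, and your ``translating the Hodge filtration'' phrase does not capture the need for this machinery. The skeleton of your argument is correct; with these two steps filled in explicitly it would align with the paper's proof.
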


\subsection{Acknowlegement}The author would like to thank Henri Darmon and Pengfei Guan for supporting his postdoctoral studies and staffs from McGill university for their help. We would like to thank the referee for helpful comments and suggestions to improve the expositions.

\subsection{Notations}Let $p$ be an odd prime and let $\FF$ be an algebraically closed field containing $\FF_{p}$. Let $W_{0}=W(\FF)$ be the Witt ring of $\FF$. Let $\sigma$ be the Frobenius on $\FF$. We denote by $\QQ_{p^{2}}$ the unramifeid quadratic extension of $\QQ_{p}$ and by $\ZZ_{p^{2}}$ its ring of integers. If $M_{1}\subset M_{2}$ are two $W_{0}$-modules, we write $M_{1}\subset^{d} M_{2}$ if the colength of the inclusion is $d$. Let $B_{p}$ be the quaternion division algebra over $\QQ_{p}$ and let $\calO_{B_{p}}$ be its maximal order. Let $E$ be an imaginary quadratic extension of $\QQ$ and let $\calO_{E}$ be the ring of integers of $E$. We denote by $*$ the nontrivial automorphism of $E$ over $\QQ$. We assume that $p$ is unramified in $E$ in this note. If $R$ is ring and $L$ is an $R$-module and $R^{\prime}$ is an $R$-algebra, we use the notation $L_{R^{\prime}}=L\otimes_{R} R^{\prime}$.  Let $G$ be a reductive group over $\QQ_{p}$, we denote by $B(G)$ the set of $\sigma$-conjugacy classes in $G(K_{0})$ following Kottwitz \cite{Ko-Comp85}.

\section{ Rapoport-Zink space of EL type}
\subsection{Rapoport-Zink space in the $p$ split case} In the case $p$ is split, $E_{p}=\QQ_{p}\times \QQ_{p}$, then $\bf{\iota}$ induces a decomposition $\XX=\XX_{0}\times \XX_{1}$.  The polarization $\lambda_{\XX}$ and the determinant condition identifies the dual of first factor as the second factor. Therefore we are led to consider the functor $\calN$ defined in Section \ref{EL-case}. Recall that for $S\in\Nilp$, $\calN(S)$ is the set of isomorphism classes of $(X,  \rho_{X})$ where 
\begin{itemize}
\item[-] $X$ is a two dimensional $p$-divisible group over $S$;
\item[-] $\rho_{X}: X\times \bar{S}\rightarrow \XX_{0}\times \bar{S} $ is a quasi-isogeny over $\bar{S}$ where $\bar{S}$ is the locus of $S$ where $p=0$.
\end{itemize}

Let $M_{0}$ be the {\Dieu} of $\XX_{0}$ and $N_{0}$ be its isoclinic isocrystal of slope $\frac{1}{2}$. Let $G=\GL(4)$ and we denote by $b\in B(G)$ the $\sigma$-conjugacy class associated to $N_{0}$.  We would like to have a concrete description of the group of automorphisms of the isocrystal $N_{0}$. This is the group defined by
\begin{equation}\label{J-split}
 J_{b}(\QQ_{p})=\{g\in G(K_{0}): g^{-1}b\sigma(g)=b\}.
\end{equation} 
We set $\tau= V^{-1}F$ where $F$ is the Frobenius operator on $N_{0}$ and $V^{-1}$ is the operator $p^{-1}F$. Then we consider the slope zero isocrystal $(N_{0}, \tau)$ and let $C_{0}$ be the $\QQ_{p^{2}}$ vector space $N^{\tau=1}_{0}$ of dimension 4. Notice that we can write $B_{p}$ as $\QQ_{p^{2}}+\QQ_{p^{2}}\Pi$ with $\Pi^{2}=p$ and by letting $\Pi$ act on $C_{0}$ by $V$ we can equip $C_{0}$ with a structure of a $B_{p}$-module. 

\begin{lemma}
The group $J_{b}(\QQ_{p})$ can be identified with $\GL_{B_{p}}(C_{0})$.
\end{lemma}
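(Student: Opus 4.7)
The plan is to identify an element of $J_b(\QQ_p)$ with its restriction to $C_0 = N_0^{\tau=1}$, and show that the resulting map lands in $\GL_{B_p}(C_0)$ and is a bijection. The key input is the slope-zero ``descent'' that turns the étale $\sigma^2$-isocrystal $(N_0,\tau)$ into the $\QQ_{p^2}$-vector space $C_0$.

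First I would unpack the definition. By \eqref{J-split}, writing $F=b\sigma$ for the Frobenius on $N_0$, the condition $g^{-1}b\sigma(g)=b$ is equivalent to $gF=Fg$ for a $K_0$-linear automorphism $g$ of $N_0$. Such a $g$ automatically commutes with $V=pF^{-1}$ and therefore with $\tau=V^{-1}F$. Restriction to $\tau$-fixed points thus gives a $\QQ_{p^2}$-linear map $g|_{C_0}:C_0\to C_0$. Since $g$ also commutes with $V$, its restriction commutes with $V|_{C_0}$, which as computed in the text acts as the element $\Pi$ of $B_p=\QQ_{p^2}+\QQ_{p^2}\Pi$. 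Indeed, for $x\in C_0$ one has $F^2 x = px$, hence $V^2 x = p^2 F^{-2}x = p x$, and for $a\in\QQ_{p^2}\subset K_0$, $V(ax)=\sigma^{-1}(a)V(x)=\sigma(a)V(x)$, so $V|_{C_0}$ satisfies the defining relations of $\Pi$ and equips $C_0$ with a left $B_p$-module structure. Consequently $g|_{C_0}\in\End_{B_p}(C_0)$, and since $g$ is invertible so is its restriction.

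Second, I would construct the inverse via slope-zero descent. The functor $(N,\tau)\mapsto N^{\tau=1}$ from $\sigma^2$-isocrystals of slope zero over $K_0$ to finite-dimensional $\QQ_{p^2}$-vector spaces is an equivalence of categories, with quasi-inverse $C\mapsto C\otimes_{\QQ_{p^2}}K_0$ equipped with $1\otimes\sigma^2$. Applied to $(N_0,\tau)$, which has slope zero because $\tau=p^{-1}F^2$, this gives a natural $K_0$-linear isomorphism $C_0\otimes_{\QQ_{p^2}}K_0\xrightarrow{\sim} N_0$ (both sides have $K_0$-dimension $4$). Hence any $\QQ_{p^2}$-linear map $h:C_0\to C_0$ extends uniquely to a $K_0$-linear, $\tau$-equivariant map $\tilde h:N_0\to N_0$. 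If in addition $h$ is $B_p$-linear, then $\tilde h$ commutes with the $K_0$-linear extension of $V|_{C_0}$, and because $C_0$ generates $N_0$ as a $K_0$-vector space, this extension is the original $V$ on all of $N_0$. Therefore $\tilde h$ commutes with both $\tau$ and $V$, hence with $F=V\tau$, so $\tilde h\in J_b(\QQ_p)$ once it is invertible (which is automatic if $h$ is).

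The two constructions are visibly mutually inverse and respect composition, producing the desired identification $J_b(\QQ_p)\cong\GL_{B_p}(C_0)$. The main obstacle is bookkeeping with semilinearities: one must verify cleanly that $\tau$ is $\sigma^2$-linear (so that $C_0$ is a $\QQ_{p^2}$-vector space, not merely a $\QQ_p$-vector space), that $V$ commutes with $\tau$ and descends to a $\sigma$-semilinear operator on $C_0$ satisfying $V^2=p$, and that the resulting $B_p$-action is compatible with the $K_0$-linear extension step; once these compatibilities are established, the equivalence of categories for slope-zero isocrystals finishes the argument.
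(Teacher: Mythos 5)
Your proof is correct and takes essentially the same approach as the paper: the paper notes that $g$ commuting with $F$ forces it to commute with $V$ and $\tau$, hence restrict to a $B_p$-linear automorphism of $C_0$, and then declares the conclusion "clear." What you have done is make that "clear" explicit by invoking the slope-zero descent equivalence $(N_0,\tau)\leftrightarrow C_0$ and carefully checking that the $B_p$-structure (in particular the $\sigma$-semilinearity of $V|_{C_0}$ and the relation $V^2=p$ on $C_0$) is exactly what matches up with commuting with $V$ on $N_0$ after extension of scalars. This is a welcome expansion of the paper's terse argument, not a different route.
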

\begin{proof}
By definition \eqref{J-split},  $J_{b}(\QQ_{p})= \{g\in \End(N_{0})^{\times}_{\QQ}: Fg=gF\}$. Since $g$ commutes with $F$, it commutes with $V$ and $\tau$. Hence it commutes with the $B_{p}$-linear action on $C_{0}$. Then the conclusion is clear.
\end{proof}

\subsection{Set structure in the $p$ split case} Denote by $\calN(i)$ the open and closed sub formal scheme of $\calN$ defined by the condition that the quasi-isogeny $\rho$ has height $i$. Then by \cite[Proposition 1.18]{Vol-can10}, we have an isomorphism $\calN(0)\cong \calN(i)$ and we denote by $\calM=\calN_{red}(0)$ the underlying reduced scheme of $\calN(0)$. Let $M\subset N_{0}$ be a {\Dieu}, we define $\vol(M)=\length (M/M\cap M_{0})-\length (M_{0}/M\cap M_{0})$. 
\begin{lemma}
The set of $\FF$-points of $\calM$ can be identified with 
\begin{equation}\label{set-split}
\calM(\FF)=\{M\subset N_{0}: pM\subset^{2} VM\subset^{2} M, \vol(M)=0 \}.
\end{equation}

\end{lemma}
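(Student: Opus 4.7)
The plan is to apply Dieudonn\'e theory to translate the moduli problem into a lattice-theoretic description inside the isocrystal $N_{0}$. Dieudonn\'e theory gives an equivalence between the category of $p$-divisible groups over $\FF$ and the category of finite free $W_{0}$-modules equipped with a $\sigma$-semilinear operator $F$ and a $\sigma^{-1}$-semilinear operator $V$ satisfying $FV=VF=p$. Under this equivalence the base point $\XX_{0}$ corresponds to $M_{0}$, and a quasi-isogeny $\rho_{X}\colon X\to\XX_{0}$ corresponds to an isomorphism of rational Dieudonn\'e modules $M(X)\otimes_{W_{0}}K_{0}\xrightarrow{\sim}N_{0}$. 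In this way I would identify $M(X)$ with a $W_{0}$-lattice $M\subset N_{0}$ stable under both $F$ and $V$.

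Next I would unwind the numerical conditions. The requirement that $X$ be a two-dimensional $p$-divisible group of height $4$ translates into $M$ being a $W_{0}$-lattice of rank $4$ with $\length_{W_{0}}(M/VM)=\dim X=2$ and $\length_{W_{0}}(M/pM)=\mathrm{height}(X)=4$. Using $FV=p$, the condition $FM\subset M$ is equivalent to $pM\subset VM$, so the $F$- and $V$-stability together collapse into the single chain $pM\subset VM\subset M$. The additivity identity $\length(M/pM)=\length(M/VM)+\length(VM/pM)$ then forces $\length(VM/pM)=4-2=2$, yielding the colength prescription $pM\subset^{2}VM\subset^{2}M$.

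Third, I would pin down the connected-component condition. Since $\calN=\bigsqcup_{i\in\ZZ}\calN(i)$ is indexed by the height of the quasi-isogeny, and since under Dieudonn\'e theory the height of $\rho_{X}$ measures the relative position of $M$ and $M_{0}$ inside $N_{0}$, an elementary computation shows that this height equals $\length(M/M\cap M_{0})-\length(M_{0}/M\cap M_{0})=\vol(M)$. Hence the condition $(X,\rho_{X})\in\calN(0)(\FF)$ is equivalent to $\vol(M)=0$.

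For the reverse direction, given any $W_{0}$-lattice $M\subset N_{0}$ satisfying the stated conditions, the inherited $F$ and $V$ from $N_{0}$ endow $M$ with a Dieudonn\'e module structure whose associated $p$-divisible group $X$ has dimension $2$ and height $4$ by the length identities above, and the inclusion $M\hookrightarrow N_{0}$ induces a quasi-isogeny $X\to\XX_{0}$ of height zero. This furnishes the inverse of the map constructed above. The argument is essentially bookkeeping; the only subtle point, and the main (modest) obstacle, is verifying that the pair of colength conditions $pM\subset^{2}VM\subset^{2}M$ simultaneously encodes the correct dimension and the correct height, which is precisely the content of the length additivity identity invoked above.
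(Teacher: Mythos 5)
Your proof is correct and takes essentially the same route as the paper, which simply asserts that the identification is clear and that the colength condition $pM\subset^{2}VM\subset^{2}M$ encodes the two-dimensionality of the $p$-divisible group. You have merely spelled out the standard Dieudonn\'e-theoretic bookkeeping (the equivalence of categories, $FM\subset M\iff pM\subset VM$ via $FV=p$, the length additivity $\length(M/pM)=\length(M/VM)+\length(VM/pM)$, and the height of $\rho_{X}$ matching $\vol(M)$) that the paper leaves implicit.
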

\begin{proof}
This is clear. The colength condition follows from the fact that the $p$-divisible groups are two dimensional.
\end{proof}

We are going to analyze the set $\calM(\FF)$ by decomposing it into pieces $\calM_{L}(\FF)$ indexed by the so-called \emph{vertex lattices} $L$ in $C_{0}$.

\begin{definition}
A vertex lattice $L$ in $C_{0}$ is a $\ZZ_{p^{2}}$-lattice in $C_{0}$ with the property that $\Pi L\subset L$ or in other words a $\calO_{B_{p}}$-lattice in $C_{0}$.
\end{definition}

\begin{remark}
Recall that the derived group $\tilde{J}$ of $J_{b}$ is $\SL_{B_{p}}(C_{0})$ and the Bruhat-Tits building of $\SL_{B_{p}}(C_{0})$ is a tree \cite[Example 2.7]{Tits-local}. The vertex lattices correspond to vertices of this tree.
\end{remark}

The next lemma shows that vertex lattices correspond to superspecial Dieudonn\'{e} modules.

\begin{lemma}\label{vertex}
Let $L$ be a vertex lattice, then $L_{W_{0}}$ is a Dieudonn\'{e} submodule in $N_{0}$ and is superspecial.
\end{lemma}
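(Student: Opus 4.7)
The plan is to exploit the identity $F|_{C_{0}} = V|_{C_{0}} = \Pi$, which reduces everything to the $\calO_{B_{p}}$-stability of $L$. Since $C_{0} = N_{0}^{\tau=1}$ with $\tau = V^{-1}F$, the relation $\tau x = x$ for $x \in C_{0}$ rearranges to $Fx = Vx$; and by construction $\Pi$ acts on $C_{0}$ as $V$, so $F$ and $V$ both restrict to $\Pi$ on $C_{0}$.

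First I would record that $N_{0} = C_{0}\otimes_{\QQ_{p^{2}}} K_{0}$ (this is the standard $\tau$-descent for slope-zero isocrystals after twisting), which implies $L_{W_{0}} := L\otimes_{\ZZ_{p^{2}}} W_{0}$ is genuinely a $W_{0}$-lattice in $N_{0}$. Next, I would check stability under $F$ and $V$: for $x \in L$ one has $F(x) = V(x) = \Pi x \in \Pi L \subset L$ by the vertex lattice condition, and extending by the $\sigma$-semilinearity of $F$ (resp.\ $\sigma^{-1}$-semilinearity of $V$) yields $FL_{W_{0}} \subset L_{W_{0}}$ and $VL_{W_{0}} \subset L_{W_{0}}$. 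This establishes that $L_{W_{0}}$ is a Dieudonn\'{e} submodule of $N_{0}$.

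For the superspecial claim, the same semilinearity argument shows that both $FL_{W_{0}}$ and $VL_{W_{0}}$ are equal to the $W_{0}$-module generated by $\{\Pi x : x \in L\}$, namely $(\Pi L)_{W_{0}}$. Hence $FL_{W_{0}} = VL_{W_{0}}$, which is the superspecial condition (equivalent to $F^{2}L_{W_{0}} = p L_{W_{0}}$). As a side remark, one may verify this lands in the set described by \eqref{set-split}: since $L$ is free of rank $2$ over $\calO_{B_{p}}$, the quotient $L/\Pi L$ is isomorphic to $\calO_{B_{p}}/\calO_{B_{p}}\Pi \cong \FF_{p^{2}}$ repeated twice, so $\Pi L \subset^{2} L$ over $\ZZ_{p^{2}}$ and consequently $p L_{W_{0}} \subset^{2} V L_{W_{0}} \subset^{2} L_{W_{0}}$.

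I do not expect any real obstacle here; the only point that requires any care is tracking the $\sigma$-semilinearity when one passes from statements about $L \subset C_{0}$ to statements about $L_{W_{0}} \subset N_{0}$, but this is purely formal once one has $F = V = \Pi$ on $C_{0}$.
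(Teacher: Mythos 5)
Your proof is correct and follows essentially the same route as the paper: the paper's proof likewise observes that $L$ being $\tau$-invariant forces $FL_{W_0} = VL_{W_0} = \Pi L_{W_0}$, from which both the Dieudonn\'{e} stability and $F^2 L_{W_0} = pL_{W_0}$ follow. You merely spell out the underlying semilinearity bookkeeping ($F = V = \Pi$ on $C_0$, extension by $\sigma$- and $\sigma^{-1}$-semilinearity to $L_{W_0}$) a bit more explicitly, which is fine and adds no new ideas.
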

\begin{proof}
Since $L$ is $\tau$-invariant, it follows that $L_{W_{0}}$ is $\tau$-stable and thus $\Pi L_{W_{0}}=V L_{W_{0}}=F L_{W_{0}}$. Then it is clear that $pL_{W_{0}}\subset VL_{W_{0}}\subset L_{W_{0}}$. It also follows that $F^{2}L_{W_{0}}=p L_{W_{0}}$ and $L_{W_{0}}$ is superspecial.
\end{proof}

For each vertex lattice $L$, we define the corresponding \emph{lattice stratum} to be 
\begin{equation}\label{set-description}
\calM_{L}(\FF)=\{M\in\calM(\FF): \Pi L_{W_{0}}\subset M\subset L_{W_{0}}\}. 
\end{equation}
The following proposition guarantees that $\calM(\FF)=\bigcup_{L}\calM_{L}(\FF)$ where $L$ runs through all the vertex lattices.

\begin{proposition}\label{crucial_lemma_split}
For each $M\in \calM$, we can find a $\tau$-stable lattice $L\subset C_{0}$ such that $\Pi{L_{W_{0}}}\subset M\subset L_{W_{0}}$. 
\end{proposition}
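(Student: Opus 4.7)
The plan is to construct a $\tau$-fixed $W_{0}$-lattice $\Lambda \subset N_{0}$ satisfying $V\Lambda \subset M \subset \Lambda$; the desired vertex lattice is then $L := \Lambda^{\tau=1} \subset C_{0}$, which is automatically a $\ZZ_{p^{2}}$-lattice with $\Pi L \subset L$ (because $V\Lambda \subset \Lambda$ follows from $V\Lambda \subset M \subset \Lambda$), and the sandwich $\Pi L_{W_{0}} = V\Lambda \subset M \subset \Lambda = L_{W_{0}}$ then gives the conclusion. I would first record the elementary inclusions coming from $M$ being a Dieudonn\'{e} module: $FM \subset M$ yields $\tau M = V^{-1}FM \subset V^{-1}M$, and $V\tau M = FM \subset M$, whence $V(M + \tau M) = VM + FM \subset M$. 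Thus $M + \tau M$ already sits inside the window $[M, V^{-1}M]$ and is pushed into $M$ by $V$; the only missing ingredient is $\tau$-stability.

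Next I would compute $\tau$ explicitly by choosing a decomposition of $N_{0}$ into two copies of the simple slope-$\tfrac{1}{2}$ isocrystal, with basis vectors $e_{1}, e_{2}$ on each factor satisfying $V(e_{1}) = e_{2}$ and $V(e_{2}) = pe_{1}$. A direct calculation gives $\tau(e_{i}) = V^{-1}F(e_{i}) = e_{i}$, so in this basis $\tau$ is simply $\sigma^{2}$ acting on coordinates, and $C_{0}$ is the $\QQ_{p^{2}}$-span of the $e_{i}$. In particular $\tau^{n}M = \sigma^{2n}M$ for every $n$, and since $M$ is a finitely generated $W_{0}$-lattice it is defined over some $W(\FF_{p^{2d}})$, which forces the $\tau$-orbit $\{\tau^{n}M\}_{n \in \ZZ}$ to be finite.

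Finally I would set
\[
\Lambda \;:=\; V^{-1}\!\Bigl(\bigcap_{n \in \ZZ}\tau^{n}M\Bigr)
\]
and verify the required properties. Finiteness of the $\tau$-orbit makes this a finite intersection of lattices, hence a $W_{0}$-lattice; $\tau$-stability is immediate from the $\tau$-invariance of $\bigcap_{n}\tau^{n}M$ together with $\tau V^{-1} = V^{-1}\tau$; and $V\Lambda \subset M$ holds because $\Lambda \subset V^{-1}M$ (the $n=0$ term of the intersection). The non-trivial condition is $M \subset \Lambda$, equivalent to $VM \subset \tau^{n}M$ for every $n$ in the orbit, i.e., $VM$ lies in the $\tau$-stable core of $M$.

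This last inclusion is the main obstacle. It is where I expect the hypothesis $\vol(M) = 0$ to enter decisively, together with the colength conditions $pM \subset^{2} VM \subset^{2} M$: these constraints pin $M$ to the basic locus and should rigidify its $\tau$-orbit enough to force $VM$ into $\bigcap_{n}\tau^{n}M$. I would verify it by a case analysis on the orbit of $M$, splitting off the trivial superspecial case $\tau M = M$ (where $L = M$ works) from the generic case in which the orbit has non-trivial length, and in the latter reducing via the trivialization from Step~2 to a finite combinatorial check on the quotient $V^{-1}M/VM$ using the $\GL_{4}(W_{0})$-coset description of $M$.
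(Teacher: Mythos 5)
Your proposal has the right skeleton but leaves a genuine gap exactly at the step you yourself flag as the ``main obstacle,'' and the route you sketch for closing it is not the one that works.

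There are two problems. First, the finiteness of the $\tau$-orbit $\{\tau^{n}M\}_{n\in\ZZ}$ is not justified: $M$ is an $\FF$-point of $\calM$ with $\FF$ algebraically closed, so there is no reason $M$ should be defined over some $W(\FF_{p^{2d}})$ — a point on a positive-dimensional stratum with a transcendental coordinate gives a lattice whose $\sigma^{2}$-orbit is infinite. Thus your argument that $\Lambda = V^{-1}\bigl(\bigcap_{n}\tau^{n}M\bigr)$ is a lattice does not go through as written. (The intersection \emph{is} a lattice, but only because it contains the $\tau$-stable lattice $FM + VM$ — and establishing that requires precisely the fact you have not yet proven.) Second, the inclusion $M \subset \Lambda$, i.e.\ $VM \subset \tau^{n}M$ for all $n$, is the entire content of the proposition and you leave it unverified. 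The hypothesis you expect to be decisive, $\vol(M)=0$, is not the relevant input: it merely normalizes the height of the quasi-isogeny and plays no role here. What is actually needed is a structural result about rank-$4$ Dieudonn\'{e} modules with $a$-number $1$, namely Norman--Oort's Lemma \ref{NO_lemma}(2): $F^{2}M + pM = FM \cap VM = V^{2}M + pM$.

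The paper's proof is shorter and more direct: after dispatching the superspecial case ($a(M)=2$), it sets $L_{W_{0}} := M + \tau(M)$ and uses the Norman--Oort identity to show in one line that $L_{W_{0}}$ is already $\tau$-stable (comparing $pL_{W_{0}} = pM + F^{2}M$ with $p\tau^{-1}L_{W_{0}} = pM + V^{2}M$), whence $\Pi L_{W_{0}} = VL_{W_{0}} = FM + VM \subset M$. In the generic case your $\Lambda$ coincides with this $L_{W_{0}}$ a posteriori (since $\Pi L_{W_{0}} \subset \bigcap_{n}\tau^{n}M \subsetneq M$ and $\Pi L_{W_{0}} \subset^{1} M$ forces $\bigcap_{n}\tau^{n}M = \Pi L_{W_{0}}$), so your intersection-based construction is not wrong in spirit; it just cannot be made rigorous without first proving that a single step $M + \tau M$ already stabilizes, at which point the intersection is redundant. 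I would encourage you to look up the Norman--Oort facts (cited in the paper as \cite{NO-Ann80}, items (11a)--(11d)) — they are exactly the tool that replaces your proposed combinatorial case analysis.
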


Before we prove this proposition, we need to recall a few basic facts about Dieudonn\'{e} modules that are stated in \cite[(11a)-(11d)]{NO-Ann80}. We define the $a$-number of the Dieudonn\'{e} module $M$ by  
\begin{equation}
a(M)=\dim_{\FF} M/ (FM+VM).
\end{equation}
Notice that $a(M)=2$ if and only if $M$ is superspecial. 
\begin{lemma}\label{NO_lemma}
Suppose $M$ has $a(M)=1$, then we have
\begin{enumerate}
\item  $F^{2}(M)/pM$ is the unique $\FF[F,V]$-submodule of $F(M)/pM$ of rank $1$;
\item $F^{2}(M)+pM=FM\cap VM= V^{2}(M)+pM $.
\end{enumerate}
\end{lemma}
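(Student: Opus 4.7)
My plan is to extract both parts of the lemma from a careful analysis of $F$ acting as a $\sigma$-semilinear endomorphism of the $\FF$-vector space $FM/pM$.

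The first step is a dimension count. From $a(M)=1$ and $\dim_\FF FM/pM = \dim_\FF VM/pM = 2$ (the standing assumption $pM\subset^{2}VM\subset^{2}M$), inclusion--exclusion in $M/pM$ gives $\dim_\FF (FM\cap VM)/pM = 1$. Independently, the $\sigma$-semilinear bijection $F:M\xrightarrow{\sim}FM$ carries $FM+VM$ onto $F(FM+VM)=F^2M+FVM=F^2M+pM$ and therefore induces an isomorphism $M/(FM+VM)\xrightarrow{\sim}FM/(F^2M+pM)$; hence $(F^2M+pM)/pM$ is also one-dimensional in $FM/pM$, and symmetrically $(V^2M+pM)/pM$ is one-dimensional in $VM/pM$.

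The second step identifies the image and kernel of the semilinear endomorphism $F$ on $FM/pM$, $Fm+pM\mapsto F^2m+pM$. The image is by definition $(F^2M+pM)/pM$. Since $F$ is injective on $N_{0}$ and $F^{-1}(pM)=VM$ (as $pM=FVM$), one has $F^2m\in pM \iff Fm\in VM$, so the kernel is $(FM\cap VM)/pM$. Moreover $VF=p\equiv 0 \bmod p$ implies $V$ acts as zero on $FM/pM$, so an $\FF[F,V]$-submodule of $FM/pM$ is nothing but an $F$-stable subspace.

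The crucial step is to show the image and kernel above coincide, i.e.\ $F^2=0$ on $FM/pM$ (equivalently $F^3M\subseteq pM$). In the supersingular (local-local) setting of the Rapoport--Zink space, $F$ is topologically nilpotent on $M/pM$, hence nilpotent on the subspace $FM/pM$; any nilpotent $\sigma$-semilinear endomorphism of a two-dimensional $\FF$-vector space squares to zero, by the standard descending-chain argument on $\mathrm{im}(\phi^k)$ (surjectivity of a semilinear endomorphism would force bijectivity, contradicting nilpotence). From $\mathrm{im}F\subseteq\ker F$ and equal dimensions we conclude $F^2M+pM=FM\cap VM$; the symmetric argument with $V$ gives $V^2M+pM=FM\cap VM$, finishing part~(2).

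Part~(1) is then immediate: a one-dimensional $F$-stable line $L\subset FM/pM$ either lies in $\ker F$ (hence equals it by dimension), or $F(L)$ is nonzero and contained in $L\cap\mathrm{im}F$, forcing $L=\mathrm{im}F$; by part~(2) both candidates equal $(F^2M+pM)/pM$, so $L$ is unique. The main obstacle is the nilpotence step: the hypothesis $a(M)=1$ alone does not suffice (Dieudonn\'e modules with an \'etale or multiplicative summand give counterexamples), and one must invoke the local-local nature of the $p$-divisible groups parametrized by the Rapoport--Zink space to apply the descending-chain argument.
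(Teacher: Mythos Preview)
The paper does not prove this lemma at all: its entire proof is a one-line citation to statements 11(b)--(d) of Norman--Oort \cite{NO-Ann80}. Your write-up, by contrast, supplies a genuine self-contained argument, and it is correct. The dimension counts in your first step are right (using $pM\subset^{2}VM\subset^{2}M$ and the $\sigma$-semilinear bijection $F:M/(FM+VM)\xrightarrow{\sim}FM/(F^{2}M+pM)$); the identification of the image and kernel of $F$ acting on $FM/pM$ is correct; and the key nilpotence step---that a nilpotent $\sigma$-semilinear endomorphism of a two-dimensional $\FF$-space squares to zero because a surjective semilinear endomorphism of a finite-dimensional space is automatically bijective---is exactly the right mechanism. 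Part~(1) then follows cleanly from part~(2) as you indicate.

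Your closing caveat is also on point and worth keeping: the hypothesis $a(M)=1$ alone does not force $F$ to be nilpotent on $M/pM$; one needs the $p$-divisible group to be local-local. In the paper this is guaranteed because every $M$ under consideration lies in the isoclinic slope-$\tfrac{1}{2}$ isocrystal $N_{0}$, and the same hypothesis is implicit in the Norman--Oort context. So compared to the paper's bare citation, your argument is more informative: it both proves the result and surfaces the hidden supersingularity assumption on which it rests.
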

\begin{proof}
This is precisely the statement of $11(b), 11(c)$ and $11(d)$ in \cite{NO-Ann80}. 

\end{proof}

\begin{myproof}{Proposition}{\ref{crucial_lemma_split}} 
Suppose $a(M)=2$, then $M$ is superspecial and therefore $M$ is $\tau$-stable and the statement is trivial.

Suppose $M$ is not $\tau$-stable then $a(M)=1$. Consider $L_{W_{0}}= M+\tau(M)$. Then $pL_{W_{0}}=pM+F^{2}M$ and $p\tau^{-1}(L_{W_{0}})=p(\tau^{-1}(M)+M)=V^{2}M+pM$. Hence $L_{W_{0}}=\tau(L_{W_{0}})$ by Proposition \ref{NO_lemma} (2). Since $\Pi$ act on $L_{W_{0}}$ by $V$, $\Pi L_{W_{0}}=FM+VM\subset M$.
\end{myproof}

Let $L$ be a vertex lattice, for each $M\in \calM_{L}(\FF)$, $M$ either fits in 
\begin{equation}\label{P1-chain1}
VM\subset^{1} \Pi L_{W_{0}} \subset^{1} M\subset^{1} L_{W_{0}}
\end{equation}
or it fits in 
\begin{equation}\label{P1-chain2}
VM\subset^{0} \Pi L_{W_{0}} \subset^{2} M\subset^{0} L_{W_{0}}.
\end{equation}

We define the Ekedahl-Oort strata of $\calM_{L}(\FF)$ by
\begin{equation}
\begin{split}
&\calM^{\circ}_{L}(\FF)=\calM_{L}(\FF)- \bigcup_{L^{\prime}\neq L}\calM_{L^{\prime}}(\FF)\\
&\calM_{L, \{0,2\}}(\FF)=\calM_{L}(\FF)\cap \bigcup_{L^{\prime}\neq L}\calM_{L^{\prime}}(\FF).\\
\end{split}
\end{equation}
Then we have the natural decomposition $$\calM_{L}(\FF)=\calM^{\circ}_{L}(\FF)\sqcup \calM_{L,\{0,2\}}(\FF)$$ which we refer to as the Ekedahl-Oort stratification of $\calM_{L}(\FF)$. 
Notice that for $M\in\calM_{L, \{0,2\}}(\FF)$, $M=L_{W_{0}}\cap L^{\prime}_{W_{0}}$ for another vertex lattice $L^{\prime}$. This is easy to see by considering the index among lattices. Therefore $M=L_{W_{0}}\cap L^{\prime}_{W_{0}}$ corresponds to a vertex lattice contained in $L$.
Let $V=L_{W_{0}}/\Pi L_{W_{0}}$ and it is a two dimensional vector space over $\FF$.  Let $k$ be any field extension of $\FF$.

\begin{proposition}\label{P1-set}
There is bijection between  $\calM_{L}(k)$ and $\PP(V)(k)$. The map is given by sending $M$ to $M/\Pi L_{W_{0}}$. Moreover the isomorphism respects the stratification on both sides in the sense that  $\calM_{L, \{0,2\}}(k)$ corresponds to the $\FF_{p^{2}}$-rational points of $\PP(V)(k)$ and $\calM^{\circ}_{L}(k)$ corresponds to the complement of $\FF_{p^{2}}$-rational points in $\PP(V)(k)$.
\end{proposition}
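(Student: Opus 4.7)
The plan is to show that $\phi : M \mapsto M/\Pi L_{W_{0}}$ defines a bijection $\calM_{L}(k) \xrightarrow{\sim} \PP(V)(k)$ that identifies the two stratifications. For well-definedness, one notes that each $M \in \calM_{L}(\FF)$ falls into chain \eqref{P1-chain1}, namely $\Pi L_{W_{0}} \subset^{1} M \subset^{1} L_{W_{0}}$, so $M/\Pi L_{W_{0}}$ is a one-dimensional subspace of the two-dimensional $\FF$-space $V$. The alternative chain \eqref{P1-chain2} (which would force $M = L_{W_{0}}$) is excluded by the volume normalization $\vol(M) = 0$: a colength-one inclusion shifts $\vol$ by $\pm 1$, so for the vertex lattices producing the positive-dimensional strata one has $\vol(L_{W_{0}}) = 1$ and therefore $L_{W_{0}} \notin \calM$.

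To construct the inverse, I would take a $k$-line $\ell \subset V \otimes_{\FF} k$, let $M_{\ell}$ be its preimage under $L_{W_{0}} \otimes k \twoheadrightarrow V \otimes k$, and verify $M_{\ell} \in \calM_{L}(k)$. Since $V L_{W_{0}} = \Pi L_{W_{0}} \subset M_{\ell}$ by Lemma \ref{vertex}, one has $V M_{\ell} \subset V L_{W_{0}} \subset M_{\ell}$, so $M_{\ell}$ is Verschiebung-stable. A length count in the chain $p L_{W_{0}} \subset^{2} \Pi L_{W_{0}} \subset^{1} M_{\ell} \subset^{1} L_{W_{0}}$ forces $V M_{\ell} \subset^{1} \Pi L_{W_{0}}$, which yields the required colengths $p M_{\ell} \subset^{2} V M_{\ell} \subset^{2} M_{\ell}$. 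The map $\ell \mapsto M_{\ell}$ is inverse to $\phi$, giving the bijection.

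For the stratifications, I would use that $M$ is superspecial iff $\tau M = M$, where $\tau = V^{-1}F$ is $\sigma^{2}$-semilinear. Because $L \subset C_{0} = N_{0}^{\tau=1}$ is pointwise fixed by $\tau$, the operator $\tau$ acts on $L_{W_{0}}$ as $1 \otimes \sigma^{2}$ and descends to a $\sigma^{2}$-semilinear action on $V$ whose fixed-point locus is the natural $\FF_{p^{2}}$-form $L/\Pi L \subset V$. Hence $M_{\ell}$ is $\tau$-stable iff $\ell$ is, iff $\ell$ is $\FF_{p^{2}}$-rational. When this holds, $M_{\ell} = L'_{W_{0}}$ for some vertex lattice $L'$ with $\Pi L \subset L' \subsetneq L$, placing $M_{\ell}$ in $\calM_{L} \cap \calM_{L'} \subset \calM_{L,\{0,2\}}$; otherwise $M_{\ell}$ is not a vertex lattice and lies only in $\calM_{L}^{\circ}$. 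The main obstacle is less the pointwise bijection itself than its upgrade to a scheme isomorphism for non-perfect extensions $k/\FF$: this requires realizing $\phi$ functorially, for instance via a universal display over $\calM_{L}$ in the sense of Zink.
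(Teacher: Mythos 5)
Your proposal follows essentially the same strategy as the paper's proof: define $\phi : M \mapsto M/\Pi L_{W_{0}}$, check it is a bijection onto lines in $V$, identify the superspecial stratum with the $\FF_{p^{2}}$-rational lines via the $\tau$-action on $V$, and appeal to Zink's theory of windows for non-perfect extensions $k/\FF$. Where your argument differs from the (very terse) proof in the paper, it is actually more careful in two places. First, you use the volume normalization $\vol(M)=0$ to show that every $M \in \calM_{L}(\FF)$ satisfies chain \eqref{P1-chain1} and not \eqref{P1-chain2}; the paper instead asserts the bijection "by \eqref{P1-chain1} and \eqref{P1-chain2}," which is confusing since chain \eqref{P1-chain2} forces $M = L_{W_{0}}$ and therefore cannot contribute a line in $V$. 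Your observation that the vertex lattices relevant here have $\vol(L_{W_{0}})=1$ makes the implicit hypothesis explicit and actually rules chain \eqref{P1-chain2} out. Second, for the stratification you argue both directions: the paper only shows that an $\FF_{p^{2}}$-rational line gives a vertex lattice (hence a point of $\calM_{L,\{0,2\}}$), whereas you also note the converse — that a non-$\tau$-stable $M$ lies in $\calM_{L'}$ for no $L' \neq L$ — though you could be a bit more explicit that this uses the uniqueness $L' = M + \tau(M) = L$ forced by colength counting. Minor nitpicks: when verifying $M_{\ell}$ is a Dieudonné module you check $V$-stability but not $F$-stability (which follows identically from $FM_{\ell} \subset FL_{W_{0}} = \Pi L_{W_{0}} \subset M_{\ell}$), and your final remark about upgrading to a scheme isomorphism is really the content of the subsequent unnumbered proposition, not of this one. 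None of these affect correctness.
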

\begin{proof}
For $k=\FF$, the map is given by sending $M$ to $M/\Pi L_{W_{0}}$ and it is clearly a bijection by \eqref{P1-chain1} and \eqref{P1-chain2}. For the second claim, notice that giving a $\FF_{p^{2}}$-point is equivalent to giving a line $l_{0}$ in $L/\Pi L$. Denote by $L_{0}$ the preimage of $l_{0}$ under the natural reduction map. Then $L_{0}$ is a $\ZZ_{p^{2}}$ lattice that $\Pi L_{0}\subset \Pi L\subset L_{0}\subset L$ and hence is a vertex lattice.  For general $k$, one can use the same proof by replacing {\Dieu} with Zink's theory of windows of displays \cite{Zink-pro99}. 
\end{proof}

\subsection{Bruhat-Tits stratification in the $p$-split case} Let $L$ be a vertex lattice. Now we will equip $\calM_{L}(k)$ with a scheme theoretic structure.  By \ref{vertex}, $L_{W_{0}}$ is a superspecial Dieudonn\'{e} module contained in $N_{0}$ and we can associate a $p$-divisible group $\XX_{L}$ to it and a quasi-isogeny $\rho_{L}: \XX_{L}\rightarrow \XX_{0}$. Similarly we can define $\XX_{\Pi L}$ associated to $\Pi L$ and  a quasi-isogeny $\rho_{\Pi L}: \XX_{\Pi L}\rightarrow \XX_{0}$. We define  quasi-isogenies through the composite
\begin{equation}\rho_{X,L}: X\xrightarrow{\rho_{X}} \XX_{0}\xrightarrow{\rho^{-1}_{L}}\XX_{L}\end{equation}
and
\begin{equation}\rho_{\Pi L, X}: \XX_{\Pi L}\xrightarrow{\rho_{\Pi L}} \XX_{0}\xrightarrow{\rho^{-1}_{X}} X.\end{equation}

Then we consider the subfunctor $\calM_{L}$ of $\calM$ given by all $(X, \rho_{X})\in \calM$ that the quasi-isogenies $\rho_{X,L}$ and $\rho_{\Pi L, X}$ are true isogenies.

\begin{lemma}\label{projective-split}
The subfunctor $\calM_{L}$ is representable by a projective scheme whose $\FF$ points are given as in \eqref{set-split}.
\end{lemma}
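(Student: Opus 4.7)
The plan is to realize $\calM_L$ as a closed subscheme of a Grothendieck Quot scheme attached to a finite flat group scheme, and thereby obtain both representability and projectivity at once.

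First, I would verify that $\calM_L$ is a closed subfunctor of $\calM$. On any $S\in \Nilp$, the locus where a given quasi-isogeny of $p$-divisible groups is an actual isogeny is cut out by a closed condition (cf.\ \cite{RZ-Aoms}), so requiring this simultaneously for $\rho_{X,L}$ and $\rho_{\Pi L, X}$ defines a closed subfunctor of $\calM$. For the $\FF$-point description, covariant Dieudonn\'{e} theory sends $X$, $\XX_L$, $\XX_{\Pi L}$ to $M$, $L_{W_0}$, $\Pi L_{W_0}$ respectively, and the two quasi-isogenies being honest isogenies translates exactly to the lattice chain $\Pi L_{W_0}\subset M \subset L_{W_0}$. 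Combined with the conditions already present in \eqref{set-split}, this yields precisely the subset described in \eqref{set-description}.

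To obtain projectivity, I would note that the inclusion $\Pi L_{W_0}\subset L_{W_0}$ induces an isogeny $\phi \colon \XX_{\Pi L}\to \XX_L$ whose kernel $H$ is a finite flat group scheme over $\Spf{W_0}$ of order bounded by the length of $L_{W_0}/\Pi L_{W_0}$. For $(X,\rho_X)\in \calM_L(S)$, the composite $\XX_{\Pi L,S}\to X\to \XX_{L,S}$ coincides with $\phi_S$, so $X$ is canonically identified with $\XX_{\Pi L, S}/K_X$ for a unique finite flat subgroup scheme $K_X\subset H_S$; conversely, every such $K_X$ whose quotient satisfies the two-dimensionality constraint on the Lie algebra produces a point of $\calM_L$. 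Thus $\calM_L$ embeds as a closed subfunctor of the Grothendieck Quot scheme of subgroup schemes of $H$, which is projective over $\Spf{W_0}$, and the residual Lie-algebra condition is again closed; this exhibits $\calM_L$ as a projective scheme.

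The main obstacle lies in transporting the Dieudonn\'{e}-module description from $\FF$-points to families over arbitrary $S\in \Nilp$, which requires invoking Grothendieck-Messing crystals, Zink's theory of displays and windows, or the general representability machinery of \cite{RZ-Aoms} in order to legitimize the bijection $(X,\rho_X)\leftrightarrow K_X$ in families. Once this correspondence is in place, projectivity of $\calM_L$ is automatic from the projectivity of the Grothendieck Quot scheme of $H$.
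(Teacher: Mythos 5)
Your proof is correct and rests on the same key observation as the paper's: a point $(X,\rho_X)$ of $\calM_L$ is sandwiched by honest isogenies between the fixed $p$-divisible groups $\XX_{\Pi L}$ and $\XX_L$. The paper obtains closedness from \cite[Proposition 2.9]{RZ-Aoms} and then invokes the boundedness/projectivity machinery of \cite[2.29--2.31]{RZ-Aoms}; you instead unpack that machinery by hand, identifying $X$ with $\XX_{\Pi L}/K_X$ for a finite flat subgroup $K_X$ of the fixed finite flat group scheme $H=\ker(\XX_{\Pi L}\to\XX_L)$ and embedding $\calM_L$ as a closed subscheme of the (projective) scheme of finite flat subgroup schemes of $H$, with the Lie-algebra (Kottwitz) condition cutting out a further closed subscheme. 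This is precisely the proof that lies behind the cited Rapoport--Zink corollaries, so the two arguments differ only in whether they cite or re-derive that step; your version is more self-contained, while the paper's is shorter because it leverages the general framework that is used repeatedly throughout.
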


\begin{proof}
The subfunctor is representable by a closed subscheme of $\calM$  by \cite[Proposition 2.9]{RZ-Aoms}. Moreover $\calM_{L}$ is bounded in the sense of \cite[2.30]{RZ-Aoms}. This can be seen from the fact that we have construced isogenies
$$\XX_{\Pi L, R} \xrightarrow{\rho_{\Pi L, X}} X  \xrightarrow{\rho_{L,X}}\XX_{L, R}.$$
Thus $\calM_{L}$ is closed subscheme of a projective scheme by \cite[Corollary 2.29]{RZ-Aoms}. Thus it is projective itself.\end{proof}

We define the Ekedahl-Oort strata of $\calM_{L}$ by  
\begin{equation}
\begin{split}
&\calM^{\circ}_{L}=\calM_{L}- \bigcup_{L^{\prime}\neq L}\calM_{L^{\prime}}\\
&\calM_{L, \{0,2\}}=\calM_{L}\cap \bigcup_{L^{\prime}\neq L}\calM_{L^{\prime}}\\
\end{split}
\end{equation}
And we call the natural decomposition $\calM_{L}=\calM^{\circ}_{L}\sqcup \calM_{L, \{0,2\}}$ the Ekedahl-Oort stratification of $\calM_{L}$. 
Now we define a morphism $f: \calM_{L}\rightarrow \PP(V_{L})$. Let $R$ be a reduced $\FF$-algebra of finite type. Given $(X, \rho_{X})\in \calM_{L}(R)$, we consider the Grothedieck-Messsing crytal $\DD(X)(R)$ cf. \cite{BBM82}. The isogenies $\rho_{L, X}$ and  $\rho_{\Pi L, X}$ give an inclusion $\DD(\XX_{\Pi L})(R)=\Pi L_{W_{0}}\otimes R\subset\DD(X)(R)\subset \DD(\XX_{L})(R)=L_{W_{0}}\otimes R$ and we define $f$ simply by sending $(X, \rho_{X})\in \calM_{L}(R)$ to $\DD(X)(R)/\Pi L_{W_{0}}\otimes R$ in $L_{W_{0}}\otimes R/ \Pi L_{W_{0}}\otimes R$.

\begin{proposition}
 The map $f: \calM_{L}\rightarrow \PP(V_{L})$ is an isomorphism.  Moreover the isomorphism respects the stratification on both sides in the sense that $\calM_{L,\{0,2\}}$ corresponds to the $\FF_{p^{2}}$-points on $\PP(V)$ and $\calM^{\circ}_{L}$ corresponds to the complement of $\FF_{p^{2}}$-points in $\PP(V)$.
\end{proposition}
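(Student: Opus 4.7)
The plan is to upgrade the set-theoretic bijection of Proposition \ref{P1-set} to a scheme-theoretic isomorphism by constructing an explicit inverse $g : \PP(V_{L}) \to \calM_{L}$ via Zink's theory of windows, and then verifying that $f$ and $g$ are mutually inverse; the stratification compatibility then follows directly from Proposition \ref{P1-set}. First I would check that $f$ is indeed a well-defined morphism. For $(X, \rho_{X}) \in \calM_{L}(R)$ with $R$ a reduced $\FF$-algebra of finite type, the true isogenies $\XX_{\Pi L, R} \to X \to \XX_{L, R}$ built into the definition of $\calM_{L}$ induce, via the covariant crystal $\DD$, inclusions $\Pi L_{W_{0}} \otimes R \hookrightarrow \DD(X)(R) \hookrightarrow L_{W_{0}} \otimes R$ whose formation commutes with base change in $R$. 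The quotient $Q := \DD(X)(R)/\Pi L_{W_{0}} \otimes R$ is then a finitely generated $R$-module of fiberwise rank one at every geometric point of $\mathrm{Spec}\,R$ by Proposition \ref{P1-set}; since $R$ is reduced, $Q$ is a line subbundle of $V_{L} \otimes R$, which is the data of an $R$-point of $\PP(V_{L})$.

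Next I would construct the inverse. Given a line subbundle $\ell \subset V_{L} \otimes \calO_{S}$ on an $\FF$-scheme $S$, set $M_{\ell} \subset L_{W_{0}} \otimes \calO_{S}$ to be the preimage of $\ell$ under the canonical projection $L_{W_{0}} \otimes \calO_{S} \twoheadrightarrow V_{L} \otimes \calO_{S}$. Since $L$ is a vertex lattice, Lemma \ref{vertex} gives $F L_{W_{0}} = V L_{W_{0}} = \Pi L_{W_{0}}$, so both $F$ and $V$ send $L_{W_{0}} \otimes \calO_{S}$ into $\Pi L_{W_{0}} \otimes \calO_{S} \subset M_{\ell}$, and the chain $p M_{\ell} \subset V M_{\ell} \subset M_{\ell}$ holds with the correct colengths. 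These data assemble into a Dieudonn\'{e} window for a $p$-divisible group $X_{\ell}$ over $S$ in the sense of \cite{Zink-pro99}, and the flanking inclusions $\Pi L_{W_{0}} \otimes \calO_{S} \subset M_{\ell} \subset L_{W_{0}} \otimes \calO_{S}$ produce isogenies $\XX_{\Pi L, S} \to X_{\ell} \to \XX_{L, S}$ which make $(X_{\ell}, \rho_{X_{\ell}})$ an object of $\calM_{L}(S)$ and thereby define $g$.

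The identity $f \circ g = \mathrm{id}_{\PP(V_{L})}$ holds by construction. For $g \circ f = \mathrm{id}_{\calM_{L}}$, the point is that the two isogenies attached to a point of $\calM_{L}(R)$ determine $\DD(X)(R)$ as a submodule of $L_{W_{0}} \otimes R$, and Zink's equivalence between windows and $p$-divisible groups then recovers $X$ itself. The compatibility with the Ekedahl-Oort stratification is the scheme-theoretic translation of the last sentence of Proposition \ref{P1-set}: by definition $\calM_{L, \{0,2\}}$ is the locus where $M$ arises as $L_{W_{0}} \cap L^{\prime}_{W_{0}}$ with $L^{\prime}$ another vertex lattice, which happens precisely when the line $M/\Pi L_{W_{0}}$ is $\tau$-stable, equivalently $\FF_{p^{2}}$-rational. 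The main technical obstacle, I expect, is justifying that $M_{\ell}$ genuinely defines a window in Zink's sense over an arbitrary reduced finite-type $\FF$-algebra $R$; the essential input is the superspeciality of $L_{W_{0}}$ provided by Lemma \ref{vertex}, which makes $V L_{W_{0}} \otimes R \subset M_{\ell}$ automatic and eliminates any Hodge-filtration obstruction, but the verification of the nilpotence/normal-decomposition axioms for $(M_{\ell}, \Pi L_{W_{0}} \otimes \calO_{S})$ must be spelled out carefully.
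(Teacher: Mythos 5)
Your proposal takes a genuinely different route from the paper, which never constructs an explicit inverse. The paper's proof is a Zariski's main theorem argument: Proposition \ref{P1-set} (extended to all field extensions $k/\FF$ via Zink's theory) shows $f$ is bijective on points; Lemma \ref{projective-split} shows $\calM_L$ is projective; hence $f$ is birational, quasi-finite and proper, and since the target $\PP(V_L)$ is normal, ZMT forces $f$ to be an isomorphism. The stratification compatibility is then read off on $\FF$-points, exactly as you do. So the paper needs only the pointwise bijection and the projectivity of $\calM_L$, and never has to produce $g$.

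Your plan to build the inverse via Zink's theory is a more constructive alternative, but as written there is a real gap at the central step. You define $M_\ell$ as the preimage of $\ell$ inside $L_{W_0} \otimes \calO_S$, but a Zink display (or window) for a formal $p$-divisible group over $S$ is not an $\calO_S$-module: it is a projective $W(\calO_S)$-module $P$ together with a submodule $Q$ satisfying $I_S P \subset Q \subset P$ (where $I_S = \ker(W(\calO_S)\to\calO_S)$), a Frobenius $F$ and a divided Frobenius $\dot F : Q\to P$, subject to the normal-decomposition and $V$-nilpotence axioms. The preimage of $\ell$ should therefore be taken inside $L_{W_0}\otimes_{W_0} W(\calO_S)$ under the composite surjection onto $V_L\otimes\calO_S$; inside $L_{W_0}\otimes_{W_0}\calO_S$ one has $pM_\ell = 0$, so the asserted chain $pM_\ell \subset VM_\ell \subset M_\ell$ with prescribed colengths is vacuous and cannot determine a $p$-divisible group. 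Even after correcting the setting, the display axioms must be verified and Zink's equivalence invoked; you flag this yourself as the "main technical obstacle\ldots\ must be spelled out carefully" but do not carry it out, and it is precisely this verification that your strategy cannot skip. What you do establish — well-definedness of $f$, the set-theoretic bijection, and stratification compatibility on $\FF$-points — is exactly the input the paper hands to ZMT, which then closes the argument without ever writing down $g$.
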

\begin{proof}
By \ref{P1-set}, $f$ is a bijection on points. Moreover using \ref{projective-split}, we see that $f$ is birational quasi-finite and proper. Therefore $f$ is an isomorphism by Zariski's main theorem since the base is normal. The moreover part can be checked on $\FF$-points which is done in Proposition \ref{P1-set}.
\end{proof}

\subsection{The main result in the $p$-split case} We now summarize the results in the previous subsections. We define the Bruhat-Tits strata by 
\begin{equation}
\begin{split}
&\calM^{\circ}_{\{1,3\}}=\bigcup_{L}\calM^{\circ}_{L} \text{ where $L$ runs through all the vertex lattices};\\
&\calM_{\{0,2\}}= \bigcup_{(L,L^{\prime})}\calM_{L}\cap \calM_{L^{\prime}} \text{ where $(L ,L^{\prime})$ runs through all distinct pair of vertex lattices}.\\
\end{split}
\end{equation}
We call the resulting decomposition $$\calM=\calM^{\circ}_{\{1,3\}}\sqcup \calM_{\{0,2\}}$$ the \emph{Bruhat-Tits stratification} of $\calM$.

\begin{remark}\label{diag-split}
We would like to explain our choice of terminology for the vertex lattices. The affine Dynkin diagram of type $\tilde{A}_{3}$ is a cycle with $4$-vertices. 
\begin{displaymath}
  \xymatrix{& &\underset{0}\bullet \ar@{-}[dl]\ar@{-}[dr]& \\
                 &\underset{1}\bullet \ar@{-}[r] &\underset{2}\bullet &\underset{3}\bullet\ar@{-}[l]}
\end{displaymath}
The derived group $\tilde{J}$  of $J_{b}$ is $\SL_{B_{p}}(C_{0})$ and the automorphism of the affine Dynkin diagram corresponds this group exchange the nodes $1$ and $3$ and exchanges the nodes $0$ and $2$.  See also \cite[Example 1.3]{Gor18} for the explanations of this.
\end{remark}

\begin{theorem}\label{main-p-split}
The reduced scheme $\calM$ is pure of dimension $1$. 
\begin{enumerate}
\item We have a stratification $\calM=\calM^{\circ}_{\{1,3\}}\sqcup \calM_{\{0,2\}}$ called the Bruhat-Tits stratification. The subscheme $\calM^{\circ}_{\{1,3\}}$ is open and dense in $\calM$ and $\calM_{\{0,2\}}$ is the set of superspecial points. Each irreducible component of $\calM$ is isomorphic to $\calM_{L}$ for a vertex lattice $L$.
\item The scheme $\calM_{L}$ is isomorphic to $\PP^{1}$ and the $\FF_{p^{2}}$-rational points of $\PP^{1}$ are precisely the superspecial points on $\calM_{L}$. 
\item For two vertex lattices $L$ and $L^{\prime}$, the intersection $\calM_{L}\cap\calM_{L^{\prime}}$ is a point which is superspecial.
\end{enumerate}
\end{theorem}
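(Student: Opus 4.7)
The strategy is to assemble the structural results established in the preceding subsections; the substantive work has already been done.

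Part (2) is essentially the content of the immediately preceding proposition: the isomorphism $f : \calM_L \xrightarrow{\sim} \PP(V_L)$ together with the fact that $V_L = L_{W_0}/\Pi L_{W_0}$ is two-dimensional over $\FF$ gives $\calM_L \cong \PP^1$, and the identification of $\calM_{L,\{0,2\}}$ with the $\FF_{p^2}$-rational points is part of the same statement.

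For Part (1), Proposition \ref{crucial_lemma_split} gives the set-theoretic decomposition $\calM(\FF) = \bigcup_L \calM_L(\FF)$. Local finiteness of this covering (only finitely many $\calM_L$ meet any given quasi-compact open of $\calM$, since each $\calM_L$ is bounded in the sense of quasi-isogeny height), together with the closedness of each $\calM_L$ in $\calM$ from Lemma \ref{projective-split}, upgrades this to a set-theoretic equality of closed reduced subsets. Since each $\calM_L \cong \PP^1$ is irreducible of dimension $1$, and since the open stratum $\calM_L^\circ$ is non-empty (being the complement of finitely many $\FF_{p^2}$-points in $\PP^1$) and by construction disjoint from every other $\calM_{L'}$, the $\calM_L$ are precisely the irreducible components of $\calM$, which is therefore pure of dimension $1$. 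The Bruhat-Tits decomposition $\calM = \calM^\circ_{\{1,3\}} \sqcup \calM_{\{0,2\}}$ and the claim that $\calM_{\{0,2\}}$ coincides with the superspecial locus then follow from the definitions once Part (3) is in hand.

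For Part (3), fix distinct vertex lattices $L$ and $L'$. Any $M \in \calM_L(\FF) \cap \calM_{L'}(\FF)$ satisfies the combined lattice inclusions $\Pi L_{W_0} + \Pi L'_{W_0} \subset M \subset L_{W_0} \cap L'_{W_0}$. The main obstacle is a combinatorial analysis using the tree structure of the Bruhat-Tits building of $\SL_{B_p}(C_0)$: such an $M$ can exist only when $L$ and $L'$ are adjacent vertices of the tree. In that case, after relabeling so that $\Pi L \subset L' \subset L$, one has $L_{W_0} \cap L'_{W_0} = L'_{W_0}$ and $\Pi L_{W_0} + \Pi L'_{W_0} = \Pi L_{W_0}$; the quotient $L'_{W_0}/\Pi L_{W_0}$ is one-dimensional over $\FF$, and the volume constraint $\vol(M) = 0$ forces $M = L'_{W_0}$, which is superspecial by Lemma \ref{vertex}. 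Conversely, every superspecial $M$ is itself a vertex lattice and lies in $\calM_L$ for each of its (multiple) neighbors $L$ in the tree, so indeed $\calM_{\{0,2\}}$ equals the full superspecial locus, closing the loop with Part (1).
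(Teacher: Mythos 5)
Your treatments of Parts (1) and (2) are sound and match the paper's implicit line of argument: Part (2) is just the isomorphism $f\colon \calM_L \to \PP(V_L)\cong \PP^1$ together with the identification of $\calM_{L,\{0,2\}}$ with the $\FF_{p^2}$-points, and Part (1) then follows from Proposition \ref{crucial_lemma_split}, the closedness and projectivity from Lemma \ref{projective-split}, local finiteness, and the non-emptiness of $\calM^\circ_L$.

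Part (3) has a genuine gap. You assert that $\calM_L(\FF)\cap \calM_{L'}(\FF)\neq\emptyset$ forces $L$ and $L'$ to be \emph{adjacent} in the tree, and you then relabel so that $\Pi L\subset L'\subset L$. But the two vertex lattices relevant to Theorem \ref{main-p-split}(3) are those for which $\calM_L$ and $\calM_{L'}$ are $\PP^1$'s, i.e.\ lattices of the same type $\{1,3\}$. In the bipartite tree of $\SL_{B_p}(C_0)$, two vertices of the same type are always at even distance, so they are never adjacent, and in particular neither of $L,L'$ contains the other; the relabeling $\Pi L\subset L'\subset L$ is impossible here. The correct picture is that such $L$, $L'$ sit at distance two with the common neighbor $L''=L\cap L'$, and the point of intersection is $L''_{W_0}$. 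You also assert the adjacency restriction ``by a combinatorial analysis'' without supplying it, and your later volume deduction tacitly relies on $\vol(L'_{W_0})=0$ without justification. The argument the paper has in mind (spelled out in the remark preceding Proposition \ref{P1-set}) is more direct and avoids all of this: if $M\in\calM_L(\FF)\cap\calM_{L'}(\FF)$ with $L\neq L'$, then the colength constraints from chains \eqref{P1-chain1}--\eqref{P1-chain2} force $\Pi L_{W_0}\subset^1 M\subset^1 L_{W_0}$ and likewise for $L'$, whence $M=L_{W_0}\cap L'_{W_0}$. As an intersection of two $\tau$-stable lattices, $M$ is $\tau$-stable, so $M=L''_{W_0}$ for a vertex lattice $L''\subset L\cap L'$, and $M$ is superspecial by Lemma \ref{vertex}. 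This index argument both proves that the intersection is a single point and identifies it as superspecial, with no appeal to the tree metric.

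A secondary remark: both you and the paper write ``for a vertex lattice $L$'' in the statement without distinguishing the two types. Only vertex lattices of the $\{1,3\}$ type (those with $\calM_L(\FF)$ a $\PP^1$ rather than a single point) index irreducible components; the $\{0,2\}$ type gives superspecial points. Making this distinction explicit would avoid the ambiguity that led to the adjacency confusion in your Part (3).
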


\begin{remark}
As the referee points out, in the above description the scheme $\calM_{L}$ is similar to the Moret-Bailly family which is used by Katsura and Oort \cite{KO-COM87} to study a similar global moduli problem. 
\end{remark}

 \subsection{Application to the supersingular locus: $p$-split case} Recall the integral model defined in section \ref{integral-model}. By the Rapoport-Zink uniformization theorem Theorem \ref{RZ-uniform}, we can transfer  the results we have obtained in the previous subsections to a description of the supersingular locus $\Shim^{ss}_{U^{p}}$.  This is the closed subscheme of $\Sh_{U^{p}, \FF}$ defined by
\begin{equation}
\Shim^{ss}_{U^{p}}=\{(A, \lambda, \iota, \eta)\in \Sh_{U^{p},\FF}: \text{ $A$ is supersingular}\}.
\end{equation}
Here we only consider its underlying reduced scheme.

\begin{theorem}
The scheme $\Shim^{ss}_{U^{p}}$ is pure of dimension $1$. For $U^{p}$ sufficiently small, the irreducible components are isomorphic to the projective line $\PP^{1}$. The intersection of two irreducible components is a superspecial  point.
\end{theorem}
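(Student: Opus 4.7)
The plan is to deduce the global statement from the local structure established in Theorem \ref{main-p-split} by running the Rapoport--Zink uniformization (1.1) backwards. First, I would unpack the uniformization isomorphism
$$
\Shim^{ss}_{U^{p}} \;\cong\; I(\QQ)\backslash \calN_{\text{red}}\times G(\AAA^{p}_{f})/ U^{p}.
$$
Since $\calN_{\text{red}}=\bigsqcup_{i\in\ZZ}\calN_{\text{red}}(i)$ with $\calN_{\text{red}}(i)\cong \calM$, Theorem \ref{main-p-split} shows that $\calN_{\text{red}}$ itself is pure of dimension $1$, with irreducible components isomorphic to $\PP^{1}$ meeting each other in superspecial points. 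So the whole task reduces to showing that the quotient by $I(\QQ)$ preserves these features.

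Next, I would establish that the double coset set $I(\QQ)\backslash G(\AAA^{p}_{f})/U^{p}\times \ZZ$ indexing the $I(\QQ)$-orbits on $\pi_{0}(\calN_{\text{red}})\times G(\AAA^{p}_{f})/U^{p}$ is finite; this is standard from the fact that $I$ is an inner form of $G$ (so its class number is finite) together with the compactness of $U^{p}$. This lets me write $\Shim^{ss}_{U^{p}}$ as a finite disjoint union of quotients of copies of $\calM$ by discrete subgroups $\Gamma_{j}\subset I(\QQ)$.

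Then I would check that for $U^{p}$ sufficiently small each $\Gamma_{j}$ acts freely on $\calM$, so the quotient map is étale and the structural features (pure dimension one, irreducible components $\cong \PP^{1}$, intersections at superspecial points) transfer verbatim to $\Shim^{ss}_{U^{p}}$. This uses a standard neatness argument: the stabilizers in $I(\QQ)$ of points of $\calN_{\text{red}}$ are contained in compact-mod-center subgroups, and intersecting with a sufficiently deep congruence subgroup kills the torsion modulo center; the center acts trivially on the underlying reduced scheme via the polarization, so genuine freeness on $\calN_{\text{red}}\times G(\AAA^{p}_{f})/U^{p}$ follows. With freeness in hand, the images of the $\calM_{L}$'s are the irreducible components of $\Shim^{ss}_{U^{p}}$ and intersections of distinct components come only from intersections $\calM_{L}\cap\calM_{L'}$ in a single copy of $\calM$, which are superspecial by Theorem \ref{main-p-split}(3).

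The main obstacle I expect is the neatness/freeness step: one must verify that the discrete subgroups $\Gamma_{j}$ really act without fixed points on $\calM$ (not merely on the generic fiber), so that the étale quotient $\Gamma_{j}\backslash \calM$ inherits the component structure. In practice this is handled by choosing $U^{p}$ inside a suitable neat level, but I would need to check compatibility with the special unitary setup here; once that is in place the theorem is immediate.
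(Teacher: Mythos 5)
Your proposal is correct and in fact fleshes out exactly the argument that the paper's one-line proof ("This follows from ... Theorem \ref{main-p-split}") leaves implicit: the Rapoport--Zink uniformization isomorphism \eqref{RZ-uniform}, finiteness of the class set for the inner form $I$, and the neatness argument that for sufficiently small $U^{p}$ the arithmetic subgroups $\Gamma_{j}\subset I(\QQ)$ act freely on $\calN_{\text{red}}$ so that the local description transfers. One small remark on your handling of the center: the standard way to phrase the freeness step is that for neat $U^p$ the stabilizer in $I(\QQ)$ of any point of $\calN_{\text{red}}\times G(\AAA^{p}_{f})/U^{p}$ is a discrete-compact, hence finite, group, and neatness forces it to be torsion-free, hence trivial; your appeal to "the center acts trivially via the polarization" is not quite the right formulation, but the conclusion you want does follow from the usual neatness argument and does not affect the validity of the rest of the proof.
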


\begin{proof}
This follows from the main result for the Rapoport-Zink space in the $p$-split case Theorem \ref{main-p-split}.
\end{proof}

\section{Rapoport-Zink Space of PEL type} 
 
\subsection{The Rapoport-Zink space in the $p$ inert case} In the case $p$ is inert, then $E_{p}=\QQ_{p^{2}}$ and we denote by $\psi_{0}, \psi_{1}$ the two embeddings of $\QQ_{p^{2}}$ in $K_{0}$. Recall the functor $\calN$ defined in Subsection \ref{PEL-case}. For $S\in \Nilp$, a $S$-valued point of $\mathcal{N}$ is given by a quadruple $(X, \lambda_{X}, \iota_{X}, \rho_{X})$ where
 \begin{itemize}
\item $X$ is a two dimensional $p$-divisible group over $S$;
\item $\lambda_{X}: X\rightarrow X^{\vee}$ is a principal polarization over $S$;
\item $\iota_{X}: \ZZ_{p^{2}}\rightarrow \End_{S}(X)$ satisfies $$\chara(\iota(a); \Lie(X))= (T-\psi_{0}(a))^{2}(T-\psi_{1}(a))^{2}$$ in $\calO_{S}[T]$;
\item $\rho_{X}: X\times \bar{S}\rightarrow \XX\times \bar{S} $ is a quasi-isogeny where $\bar{S}$ is the locus of $S$ where $p=0$.
\end{itemize}

Let $N$ be the  height $8$ isocrystal associated to $\XX$. Let $V$ be the split Hermitian space of dimension 4 over $\QQ_{p^{2}}$ and consider the quasi-split unitary group $G=\GU(V)$.  We denote by $b\in B(G)$ the $\sigma$-conjugacy class associated to $N$.  We would like to have a concrete description of the group of automorphisms of the isocrystal $N$ that preserve the additional structures that is the group 
\begin{equation}
J_{b}(\QQ_{p})=\{g\in G(K_{0}): g^{-1}b\sigma(g)=b\}.  
\end{equation}
We set $\tau= V^{-1}F$ where $F$ is the Frobenius operator on $N$ and $V^{-1}$ is the operator $p^{-1}F$. Let $N=N_{0}\oplus N_{1}$ where $N_{0}$ is the subspace of $N$ that $\iota(\alpha)$ acts by $\psi_{0}(\alpha)$ and $N_{1}$ is the subspace of $N$ that $\iota(\alpha)$ acts by $\psi_{1}(\alpha)$  for the two embeddings $\psi_{0}, \psi_{1}$. The polarization $\lambda$ induces an alternating form $(\cdot,\cdot)$ on $N$ and $N_{i}$ is totally isotropic with respect $(\cdot, \cdot)$. Then consider the slope zero isocrystal $(N_{0}, \tau)$ and the $\QQ_{p^{2}}$-vector space $C_{0}= N^{\tau=1}_{0}$ of dimension 4.  We define an alternating form $\{\cdot, \cdot\}$ on $N_{0}$ by $\{x, y\}=(x, Fy)$. Similarly for a Dieudonn\'{e} module $M\subset N_{0}$, we can decompose $M=M_{0}\oplus M_{1}$.

\begin{lemma}
The group $J_{b}(\QQ_{p})$ can be identified with $\GU(C_{0})$ the unitary group for the Hermitian space $(C_{0}, \{\cdot,\cdot\})$.
\end{lemma}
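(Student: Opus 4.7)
The plan is to exhibit explicit, mutually inverse maps between $J_b(\QQ_p)$ and $\GU(C_0)$. Unpacking the definition, an element $g \in J_b(\QQ_p)$ is a $K_0$-linear automorphism of $N$ that commutes with the $E_p$-action $\iota$, preserves $(\cdot,\cdot)$ up to a similitude factor $c(g) \in K_0^\times$, and commutes with the Frobenius $F = b\sigma$. Commuting with $\iota$ forces $g$ to respect the decomposition $N = N_0 \oplus N_1$, and commuting with $F$ forces commuting with $V = pF^{-1}$, hence with $\tau = V^{-1}F = p^{-1}F^2$ on $N_0$. Therefore $g$ restricts to a $\QQ_{p^2}$-linear automorphism of $C_0 = N_0^{\tau=1}$, giving a candidate map $g \mapsto g|_{C_0}$.

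Next I would check that this restriction lands in $\GU(C_0,\{\cdot,\cdot\})$. The similitude calculation
\[
\{gx,gy\} = (gx, Fgy) = (gx, gFy) = c(g)(x,Fy) = c(g)\{x,y\}
\]
uses only $g \circ F = F \circ g$ and the similitude property on $(\cdot,\cdot)$. Before this makes sense on $C_0$, I would confirm that $\{\cdot,\cdot\}$ actually restricts to a $\QQ_{p^2}$-valued (skew-)Hermitian form there: the Galois invariance $\sigma^2\{x,y\} = \{x,y\}$ for $x,y \in C_0$ drops out from two applications of the Frobenius--Verschiebung identity $(Fa,Fb) = p\,\sigma((a,b))$ combined with $F^2x = px$, while the sesquilinearity comes from the compatibility $F\circ\iota(\alpha) = \iota(\sigma(\alpha))\circ F$ together with the polarization identity $(\iota(\alpha)x,y) = (x,\iota(\alpha^*)y)$; the latter shows $\{\iota(\alpha)x,y\} = \psi_0(\alpha)\{x,y\}$ and $\{x,\iota(\alpha)y\} = \psi_1(\alpha)\{x,y\}$, which is Hermitian once $\QQ_{p^2}$ is identified with $\psi_0(\QQ_{p^2})$.

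For the inverse direction, any $h \in \GU(C_0)$ extends uniquely to a $K_0$-linear endomorphism of $N_0 = C_0 \otimes_{\QQ_{p^2}} K_0$ commuting with $\tau$, and then to $N_1$ by the rule $h(Fx) := F(h(x))$; this extension is automatically $F$-equivariant, and its compatibility with $(\cdot,\cdot)$ on $N = N_0 \oplus N_1$ is forced by the similitude relation for $\{\cdot,\cdot\}$ together with the fact that $N_0$ and $N_1$ are totally isotropic and paired nondegenerately by $(\cdot,\cdot)$. The main obstacle is really just the bookkeeping — keeping straight the Frobenius semilinearity, the $E_p$-action, and the interplay between the two pairings $(\cdot,\cdot)$ and $\{\cdot,\cdot\}$ — but once the compatibilities above are recorded, the two maps are visibly inverse to each other.
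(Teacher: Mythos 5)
Your proposal is correct and follows essentially the same approach as the paper: observe that commuting with the $E_p$-action preserves the grading $N = N_0 \oplus N_1$, that commuting with $F$ implies commuting with $\tau$ so that $g$ restricts to $C_0$, and then check compatibility with $\{\cdot,\cdot\}$. The paper's own proof is considerably terser — it states the forward direction in three sentences and does not verify that $\{\cdot,\cdot\}$ is actually Hermitian on $C_0$, nor construct the inverse map; your version fills in those checks (the $\sigma^2$-invariance of $\{x,y\}$ via $(Fa,Fb)=p\sigma((a,b))$ and $F^2=p$ on $C_0$, the sesquilinearity from $F\iota(\alpha)=\iota(\sigma\alpha)F$, and the extension of $h\in\GU(C_0)$ to $N$ by $F$-equivariance), which makes the argument self-contained. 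One small point you could add for completeness: the similitude factor $c(g)$ lies in $\QQ_p^\times$ rather than just $K_0^\times$, which follows by comparing $(Fgx,Fgy)$ and $(gFx,gFy)$ using $Fg=gF$; this is needed for the restriction to literally land in $\GU(C_0)$ as a group over $\QQ_p$.
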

\begin{proof}
Since $J_{b}(\QQ)$ respects the $\QQ_{p^{2}}$-action on $N$, it respects the grading $N=N_{0}\oplus N_{1}$.  By definition $J_{b}(\QQ_{p})= \{g\in \End(N)^{\times}_{\QQ}: Fg=gF , (gx, gy)=c(g)(x, y)\}$.  Since $g\in J_{b}(\QQ_{p})$ commutes with the Frobenius $F$, $g$ commutes with $\tau$ and hence it restricts to an action on $C_{0}$ and respect the Hermitian form $\{\cdot,\cdot\}$ up to scalar. 
\end{proof}

\subsection{Set structure in the $p$ inert case} First we would like to describe the set $\calN(\FF)$ by passing from $p$-divisible groups to Dieudonn\'{e} modules. Denote by $\calN(i)$ the open and closed sub formal scheme of $\calN$ defined by the condition that the quasi-isogeny $\rho_{X}$ has height $i$. Then by \cite[Proposition 1.18]{Vol-can10}  we have an isomorphism between $\calN(i)$ and $\calN(0)$. Thus it suffices to describe $\calN(0)$.  We denote by $\calM$ the underlying reduced scheme $\calN_{red}(0)$ of $\calN(0)$.

\begin{lemma}\label{set-description-pre}
The set $\calM(\FF)$ can be identified with
$$\{M\subset N: pM_{1}\subset^{2} VM_{0} \subset^{2} M_{1},  pM_{0}\subset^{2} VM_{1} \subset^{2} M_{0}, M_{0}=M^{\perp}_{1}, M_{1}=M^{\perp}_{0}\}$$
here $\perp$ is the integral dual with respect to $(\cdot, \cdot)$.
\end{lemma}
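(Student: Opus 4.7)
The plan is to translate the moduli problem for $\calM=\calN_{red}(0)$ into a lattice-theoretic description via covariant Dieudonn\'{e} theory, examining each piece of PEL data in turn. First I would use the quasi-isogeny $\rho_{X}$ to identify the rational Dieudonn\'{e} module of $X$ with $N$, so that $M=\DD(X)$ becomes a $W_{0}$-lattice in $N$ stable under $F$ and $V$. The restriction to $\calN(0)$ (height-zero component) should fall out of the self-duality condition below, rather than being imposed separately.

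Next, since the $\ZZ_{p^{2}}$-action $\iota_{X}$ extends the action on $N$ coming from $\boldsymbol{\iota}$, the lattice $M$ is stable under the $\ZZ_{p^{2}}$-action and therefore splits compatibly as $M=M_{0}\oplus M_{1}$. Because $\sigma$ exchanges $\psi_{0}$ and $\psi_{1}$, the $\sigma$-linear operators $F$ and $V$ interchange the two summands, so $VM_{0}\subset M_{1}$ and $VM_{1}\subset M_{0}$. The signature condition $(T-\psi_{0}(a))^{2}(T-\psi_{1}(a))^{2}$ on $\Lie(X)=M/VM$ forces each of $\dim_{\FF}(M_{0}/VM_{1})$ and $\dim_{\FF}(M_{1}/VM_{0})$ to equal $2$. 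Combined with the identity $pM\subset VM$ (which comes from $FV=p$ applied to each graded piece), this yields both chains $pM_{1}\subset^{2} VM_{0}\subset^{2} M_{1}$ and $pM_{0}\subset^{2} VM_{1}\subset^{2} M_{0}$.

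For the polarization, the principal $\lambda_{X}$ induces under Dieudonn\'{e} theory an isomorphism of $M$ with its dual lattice inside $N$ with respect to the alternating form $(\cdot,\cdot)$ on $N$ induced by $\lambda_{\XX}$. The compatibility $\lambda\circ\iota(a^{*})=\iota(a)^{\vee}\circ\lambda$ implies that $N_{0}$ and $N_{1}$ are mutual annihilators (recall they are already totally isotropic), so the self-duality of $M$ translates into the relations $M_{0}=M_{1}^{\perp}$ and $M_{1}=M_{0}^{\perp}$ as stated. Conversely, any lattice $M$ satisfying all the listed conditions produces a tuple $(X,\lambda_{X},\iota_{X},\rho_{X})$ by reversing the dictionary, and the self-duality automatically forces the quasi-isogeny $\rho_{X}$ to have height zero, landing us in $\calN(0)$.

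The only genuinely delicate step is the last point: verifying that the pair of self-duality conditions $M_{i}=M_{1-i}^{\perp}$ subsumes the height-zero condition, so that no additional volume constraint must be added. This will follow by computing the determinant of $\rho_{X}$ through the compatibility of $\lambda_{X}$ and $\lambda_{\XX}$ up to a $\QQ_{p}^{\times}$-scalar, using that both polarizations are principal. Beyond that, the proof is a bookkeeping exercise in comparing colengths on the two graded pieces.
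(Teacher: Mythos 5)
Your proposal is correct and follows essentially the same route as the paper: one translates each piece of PEL data (the quasi-isogeny, the $\ZZ_{p^{2}}$-grading, the Kottwitz signature condition, the principal polarization) into the corresponding lattice-theoretic condition on $M=M_{0}\oplus M_{1}$ via covariant Dieudonn\'{e} theory. The paper's proof is terser and leaves implicit the point you flag about the self-duality $M_{0}=M_{1}^{\perp}$, $M_{1}=M_{0}^{\perp}$ absorbing the height-zero condition, but that observation is correct and is implicitly built into the statement of the lemma.
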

\begin{proof}
The conditions are translations, in terms of Dieudonn\'{e} modules, of the conditions on $p$-divisible groups defining the functor $\calN$. For example,  $pM_{1}\subset^{2} VM_{0} \subset^{2} M_{1}$ and  $pM_{0}\subset^{2} VM_{1} \subset^{2} M_{0}$ are direct translations of the Kottwitz conditions.
\end{proof}

Notice that $M_{0}$ determines $M_{1}$ and therefore we arrive at the following simpler description of $\calM$.
\begin{lemma}\label{set-description}
The set $\calM(\FF)$ can be identified with the following set of lattices in $N_{0}$
$$\{D \subset N_{0}: pD^{\vee}\subset^{2} D\subset^{2} D^{\vee} \}.$$
Here $D^{\vee}$ is the intergal dual of $D$ with respect to the form $\{\cdot,\cdot\}$.
\end{lemma}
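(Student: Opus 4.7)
The strategy is to use the self-duality $M^{\perp}=M$ (equivalently $M_{0}^{\perp}=M_{1}$ in the notation of Lemma \ref{set-description-pre}) to eliminate $M_{1}$ from the data, so that $M \mapsto D := M_{0}$ becomes a bijection from the set described in Lemma \ref{set-description-pre} onto the set of lattices $D \subset N_{0}$ satisfying $pD^{\vee}\subset^{2} D\subset^{2} D^{\vee}$. Everything then reduces to translating the lattice chain conditions on $M$ into conditions on $D$ relative to the form $\{\cdot,\cdot\}$.

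The key computation is the identification
\[
VM_{1}=pD^{\vee}.
\]
To see this, note that $F$ (and hence $V=pF^{-1}$) interchanges $N_{0}$ and $N_{1}$, because $F$ is $\sigma$-semilinear while the grading is determined by the two embeddings $\psi_{0},\psi_{1}$ of $\QQ_{p^{2}}$ in $K_{0}$. Since $N_{0}$ and $N_{1}$ are totally isotropic for $(\cdot,\cdot)$, for $y\in N_{0}$ we have $\{D,y\}=(D,Fy)\subset W_{0}$ if and only if $Fy\in M_{0}^{\perp}=M_{1}$; that is, $D^{\vee}=F^{-1}M_{1}=p^{-1}VM_{1}$, which gives the claimed identity. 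Substituting $VM_{1}=pD^{\vee}$ into the chain $pM_{0}\subset^{2} VM_{1}\subset^{2} M_{0}$ yields $pD\subset^{2} pD^{\vee}\subset^{2} D$, which is equivalent to $pD^{\vee}\subset^{2} D\subset^{2} D^{\vee}$.

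It remains to check that the companion condition $pM_{1}\subset^{2} VM_{0}\subset^{2} M_{1}$ is redundant. Here one uses the compatibility between the polarization-induced pairing on $N$ and the operators $F,V$, namely $(Fx,y)=\sigma(x,Vy)$, which implies that the involution $L\mapsto L^{\perp}$ on lattices of $N$ interchanges the roles of $FM_{0}$ and $VM_{1}$ up to a factor of $p$. Applying $\perp$ to $pM_{0}\subset^{2} VM_{1}\subset^{2} M_{0}$ and invoking $M_{i}^{\perp}=M_{1-i}$ then recovers $pM_{1}\subset^{2} VM_{0}\subset^{2} M_{1}$, so no further condition on $D$ is produced.

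The main obstacle, and the only place requiring care, is the bookkeeping for the adjointness statement $(Fx,y)=\sigma(x,Vy)$ and the fact that $F$ and $V$ swap the grading; once these are set up properly, the two chains on $(M_{0},M_{1})$ collapse cleanly to the single chain $pD^{\vee}\subset^{2} D\subset^{2} D^{\vee}$ on $N_{0}$. The rest is a direct translation, and the map $M\mapsto D$ is manifestly injective (since $M_{1}$ is recovered as $M_{0}^{\perp}$) and surjective (since any $D$ with $pD^{\vee}\subset^{2} D\subset^{2} D^{\vee}$ can be paired with $M_{1}:=D^{\perp}$ to reconstruct a valid $M$).
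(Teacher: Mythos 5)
Your proof is correct and follows essentially the same strategy as the paper: set $D=M_{0}$, use the self-duality to express $M_{1}$ as a dual of $D$ via the pairing, and translate the lattice chain from Lemma \ref{set-description-pre} into the single chain $pD^{\vee}\subset^{2}D\subset^{2}D^{\vee}$. The one cosmetic difference is that you compute $VM_{1}=pD^{\vee}$ and substitute into the chain $pM_{0}\subset^{2}VM_{1}\subset^{2}M_{0}$, whereas the paper applies $F$ to the other chain and states the identity as $FM_{1}=pM_{0}^{\vee}$; your version $D^{\vee}=F^{-1}M_{1}$, equivalently $VM_{1}=pD^{\vee}$, is what the displayed computation actually yields, so your bookkeeping is if anything the cleaner one. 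Your added observation that the companion chain is recovered by applying $\perp$ (using $(VL)^{\perp}=F^{-1}L^{\perp}$ and $M_{i}^{\perp}=M_{1-i}$) is a valid check of redundancy that the paper leaves implicit.
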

\begin{proof}
Given $M\in \calM(\FF)$, we let $D=M_{0}$. First $pM_{1}\subset^{2} VM_{0} \subset^{2} M_{1}$ is equivalent to $FM_{1}\subset^{2} M_{0} \subset^{2} \frac{1}{p}FM_{1}$. Notice  
\begin{align*}\label{tau-dual}
 F M_{1}=  F M_{0}^{\perp} &=\{ x\in N_{1}: ( M_{0},F^{-1} x)\subset W_{0} \}\\
     &=\{x\in N_{1}: (M_{0}, \frac{V}{p} x)\subset W_{0} \}\\
     &=p M^{\vee}_{0}
 \end{align*}
 and  hence $pM^{\vee}_{0}\subset^{2} M_{0} \subset^{2} M_{0}^{\vee}$. Conversely if we set $M_{0}=D$ and $M_{1}=F^{-1}(pD^{\vee})$, then one can check $M=M_{0}\oplus M_{1}$ does give a point in $\calM(\FF)$ by checking it satisfies all the conditions in Lemma \ref{set-description-pre}.
 \end{proof}

\subsection{Vertex lattices in the $p$-inert case} We will decompose the Rapoport-Zink space $\calM$ into a union of $\calM_{L}$ which we call a \emph{lattice stratum}. These strata are indexed by the so called \emph{vertex lattices} which we will now define. First we need the following lemma. 

\begin{lemma}[ {\cite[Proposition 1.15]{Vol-can10}} ]\label{Hermitian_form}
There exists a basis of $C_{0}$ such that the form $\{\cdot, \cdot\}$ with respect to this basis is given by a unit multiple of the identity matrix.
\end{lemma}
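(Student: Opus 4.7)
The plan is to identify $(C_0, \{\cdot,\cdot\})$, up to scalar, as the split Hermitian space of dimension $4$ over the unramified extension $\QQ_{p^2}/\QQ_p$, and then diagonalize to extract the desired basis. First I would verify that $\{\cdot,\cdot\}$ restricts to a non-degenerate Hermitian form on $C_0$ (possibly after multiplying by a fixed trace-zero element of $\QQ_{p^2}^\times$ to absorb a sign). The key input is the Frobenius compatibility $(Fx,y)=\sigma((x,Vy))$ of the polarization form on $N$; combined with the defining relation $Fx=Vx$ for $x\in C_0$, this yields $\sigma^2(\{x,y\})=\{x,y\}$ so that $\{x,y\}\in\QQ_{p^2}$. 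The sesquilinearity $\{\alpha x,y\}=\alpha\{x,y\}$ and $\{x,\alpha y\}=\bar\alpha\{x,y\}$ for $\alpha\in\QQ_{p^2}$ follows from $\iota$-equivariance of $(\cdot,\cdot)$ together with $F\iota(\alpha)=\iota(\sigma\alpha)F$, and the Hermitian symmetry (up to the trace-zero correction) is read off from the alternating property of $(\cdot,\cdot)$.

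Next I would invoke the classification of non-degenerate Hermitian forms over $\QQ_{p^2}/\QQ_p$: since $N_{\QQ_{p^2}/\QQ_p}(\QQ_{p^2}^\times)$ has index $2$ in $\QQ_p^\times$ (with representatives $1$ and $p$), isometry classes in a fixed dimension are determined by discriminant modulo norms. To pin down the class of $(C_0,\{\cdot,\cdot\})$ I would exhibit a self-dual $\ZZ_{p^2}$-lattice, forcing the discriminant to lie in $\ZZ_p^\times$ and hence in the norm group. Equivalently, I would construct a $\tau$-stable $W_0$-lattice in $N_0$ that is self-dual for $\{\cdot,\cdot\}$. Such a lattice arises from the principal polarization on the base point $\XX$: its Dieudonn\'e module $\gothM\subset N$ is self-dual under $(\cdot,\cdot)$, and applying $Fx=Vx$ on $\tau$-invariants translates the self-duality of the $\psi_0$-component of $\gothM$ into self-duality for $\{\cdot,\cdot\}$; if the initial lattice is not $\tau$-stable one replaces it by a common refinement of finitely many $\tau$-translates, which remains self-dual.

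With the discriminant identified as trivial, Gram--Schmidt produces an orthogonal basis $e_1,\dots,e_4$ of $C_0$ with $\{e_i,e_i\}=a_i\in\ZZ_p^\times$. Since $p$ is unramified, Hensel's lemma implies that every element of $\ZZ_p^\times$ is a norm from $\ZZ_{p^2}^\times$, so each $e_i$ can be rescaled by a suitable element of $\QQ_{p^2}^\times$ to normalize all the $a_i$ to a single unit $c\in\ZZ_p^\times$, realizing $\{\cdot,\cdot\}$ as $c\cdot I_4$ in the new basis. The main obstacle I anticipate is the sign-and-Frobenius bookkeeping of the first step: pinning down whether $\{\cdot,\cdot\}$ is honestly Hermitian or skew-Hermitian in the paper's conventions, and fixing the correct trace-zero scalar correction, requires careful tracking of the $\sigma$-twist in the polarization identity. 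A secondary hurdle is constructing the $\tau$-stable self-dual lattice concretely enough to be confident in the discriminant computation.
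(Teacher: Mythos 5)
The paper does not give its own proof here: Lemma~\ref{Hermitian_form} is cited directly from Vollaard's Proposition~1.15, so I can only assess whether your argument is sound.

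Your overall strategy is right — realize $\{\cdot,\cdot\}$ (after multiplying by a trace-zero unit) as a non-degenerate Hermitian form over $\QQ_{p^2}/\QQ_p$, classify by discriminant modulo norms, show the discriminant is a norm, and diagonalize/rescale. Step~1 checks out: using $(Fx,y)=\sigma((x,Vy))$ together with $Fy=Vy$, $F^2y=py$ for $y\in C_0$, one gets $\sigma^2(\{x,y\})=\{x,y\}$, and the symmetry works out to $\{y,x\}=-\sigma(\{x,y\})$ (skew-Hermitian, hence Hermitian after a $\delta$ with $\sigma(\delta)=-\delta$). Step~4 is also fine since $N_{\QQ_{p^2}/\QQ_p}:\ZZ_{p^2}^\times\to\ZZ_p^\times$ is surjective.

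The gap is in Step~3. You claim that self-duality of the base point's Dieudonn\'e module $\gothM=\gothM_0\oplus\gothM_1$ under $(\cdot,\cdot)$ ``translates into self-duality of $\gothM_0$ for $\{\cdot,\cdot\}$.'' This is false. The principal polarization gives $\gothM_1=\gothM_0^\vee$ (dual in the pairing $N_0\times N_1\to K_0$), while $\gothM_0^{\vee,\{\}}=(F\gothM_0)^\vee$. But the Kottwitz condition of signature $(2,2)$ forces $F\gothM_0\subset^2\gothM_1$, hence $\gothM_0\subset^2\gothM_0^{\vee,\{\}}$ — colength $2$, not $0$ — exactly as recorded in Lemma~\ref{set-description} ($pD^\vee\subset^2 D\subset^2 D^\vee$). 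Your argument only yields the inclusion $\gothM_0\subset\gothM_0^{\vee,\{\}}$. Moreover, the proposed repair (``replace by a common refinement of finitely many $\tau$-translates, which remains self-dual'') does not work: even if a lattice $L$ were self-dual, $L\cap\tau L$ satisfies $L\cap\tau L\subsetneq L\subsetneq(L\cap\tau L)^\vee$ and is strictly smaller than its dual. The correct fix is cheaper than what you attempted: you do not need a self-dual lattice, only a Gram determinant of even valuation. From $\gothM_0\subset^2\gothM_0^{\vee,\{\}}$ the Gram matrix of $\gothM_0$ in any $W_0$-basis has determinant $p^2u$ with $u$ a unit, and (since the Gram determinant is $\sigma$-invariant and any change of basis multiplies it by $\det(A)\sigma(\det A)$, which always has even valuation) this already places the discriminant in $N_{\QQ_{p^2}/\QQ_p}(\QQ_{p^2}^\times)$. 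With the discriminant pinned down this way, the rest of your proof goes through and produces the basis claimed in the lemma.
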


\begin{definition}
A $\ZZ_{p^{2}}$-lattice $L\subset C_{0} $ is called a vertex lattice if $pL^{\vee}\subset L\subset L^{\vee}$.
\end{definition}
 
An elementary computation using a fixed basis of $C_{0}$ and Lemma \ref{Hermitian_form} shows that the index of $L$ defined as the $\ZZ_{p^{2}}$-length of $L/pL^{\vee}$ is an even integer between $0$ and $4$. Therefore there are $3$ types of vertext lattices:

\begin{enumerate}
\item A vertex lattice $L_{1}$: $pL_{1}^{\vee}\subset^{4} L_{1}\subset^{0} L_{1}^{\vee}$ of type $1$;
\item A vertex lattice $L_{02}$: $pL_{02}^{\vee}\subset^{2} L_{02}\subset^{2} L_{02}^{\vee}$ of type $\{0,2\}$;
\item A vertex lattice $L_{3}$: $pL_{3}^{\vee}\subset^{0} L_{3}\subset^{4} L_{3}^{\vee}$ of type $3$.
\end{enumerate}

\begin{remark}\label{diagram-inert}
We would like to explain our choice of terminology for the vertex lattices. The affine Dynkin diagram of type $\tilde{A}_{3}$ is a cycle with $4$-vertices. 
\begin{displaymath}
  \xymatrix{& &\underset{0}\bullet \ar@{-}[dl]\ar@{-}[dr]& \\
                 &\underset{1}\bullet \ar@{-}[r] &\underset{2}\bullet &\underset{3}\bullet\ar@{-}[l]}
\end{displaymath}
The derived group $\tilde{J}$  of $J_{b}$ is $\SU(C_{0})$ and the automorphism of the Dynkin diagram corresponds this group fixes the nodes $1$ and $3$ while exchanges the nodes $0$ and $2$. In particular, in the rational Bruhat-Tits building of $\tilde{J}$, $\{0,2\}$ should be viewed as a single vertex of a fixed standard alcove.
\end{remark}

\begin{definition}\label{lattice-stratum-set} To each vertex lattice $L$, we define the following lattice stratum $\calM_{L}(\FF)$ 
\begin{enumerate}
\item For each vertex lattice $L_{1}$ of type $1$, we define $\calM_{L_{1}}(\FF)=\{D\in \calM(\FF): D\subset L_{1,W_{0}}\}$. 
\item For each vertex lattice $L_{3}$ of type $3$, we define $\calM_{L_{3}}(\FF)=\{D\in \calM(\FF): L_{3,W_{0}}\subset D\}$. 
\item For each vertex lattice $L_{02}$ of type $\{0,2\}$, we define $\calM_{L_{02}}(\FF)=\{D\in \calM(\FF):  L_{02,W_{0}}\subset D\}$. 
\end{enumerate}
\end{definition}

The following two results show that we can decompose $\calM(\FF)$ into a union of lattice strata defined in Defintion \ref{lattice-stratum-set}. 
\begin{lemma}\label{Pappas}
Let $D\in\calM(\FF)$ as in Lemma \ref{set-description}, then $\dim D+\tau(D)/D\leq 1$. 
\end{lemma}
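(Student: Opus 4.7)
The plan is to translate the inequality into a dimension bound for sublattices of $M_1$ and then apply a Hermitian-form argument on the Lie algebra side. Writing $D = M_0$ for the Dieudonn\'{e} module $M = M_0 \oplus M_1$ of the point and recalling $\tau = V^{-1}F$, I would first observe that applying $V$ gives $V(D + \tau D) = VM_0 + FM_0 \subset M_1$ and that $V$ induces an $\FF$-linear isomorphism $V^{-1}M_1/M_0 \xrightarrow{\sim} M_1/VM_0$ carrying $(D + \tau D)/D$ onto $(VM_0 + FM_0)/VM_0$. Since both $VM_0$ and $FM_0$ are colength-$2$ sublattices of $M_1$ containing $pM_1$ --- the relation $FM_0 \subset^{2} M_1$ is obtained by applying $F$ to the Kottwitz chain $pM_0 \subset^{2} VM_1 \subset^{2} M_0$ --- the problem reduces to showing that, as $2$-dimensional subspaces of $M_1/pM_1 \cong \FF^{4}$, the images of $VM_0$ and $FM_0$ meet in a subspace of $\FF$-dimension at least $1$.

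For this I would equip $N_1$ with the Hermitian form $\{x,y\}_1 := (x, Vy)$, mirroring $\{\cdot,\cdot\}$ on $N_0$. With respect to this form, $M_1^{\vee_1} = V^{-1}M_0$, so the reduction of $\{\cdot,\cdot\}_1$ modulo $p$ on $M_1/pM_1$ has radical $pM_1^{\vee_1}/pM_1$, a $2$-dimensional subspace, and the quotient is a non-degenerate $2$-dimensional Hermitian $\FF$-space. The key structural identity is $FM_0 = pM_1^{\vee_1}$: since $V(FM_0) = pM_0 = V(pM_1^{\vee_1})$ and $V$ is injective on the isocrystal, the two lattices coincide. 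This identifies $FM_0/pM_1$ with the entire radical of the reduced form. A short adjoint computation using $(Fa,b) = \sigma(a, Vb)$ then gives $\{Vx, Vy\}_1 = (Vx, V^2 y) = p\,\sigma^{-1}(x, Vy) \in pW_0$, so $VM_0/pM_1$ is a $2$-dimensional totally isotropic subspace of the reduced Hermitian space.

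The final step exploits the fact that $\FF$ is algebraically closed and $p$ is odd: every non-degenerate $2$-dimensional Hermitian form over $\FF$ admits an isotropic line but no $2$-dimensional totally isotropic subspace. Consequently, the image of $VM_0/pM_1$ in the non-degenerate quotient $(M_1/pM_1)/\mathrm{rad}$ is totally isotropic and hence of dimension at most $1$, forcing $VM_0/pM_1 \cap \mathrm{rad} = (VM_0 \cap FM_0)/pM_1$ to have $\FF$-dimension at least $1$. This yields $\dim_{\FF}(VM_0 + FM_0)/VM_0 \leq 1$, which is exactly the desired inequality. The main obstacle will be establishing the identity $FM_0 = pM_1^{\vee_1}$ cleanly --- it is the bridge between the Frobenius/Verschiebung structure of the $p$-divisible group and the Hermitian duality on $N_1$; once this is in hand, everything else is routine lattice bookkeeping together with $2$-dimensional Hermitian linear algebra over $\FF$.
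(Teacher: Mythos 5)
Your argument has a genuine gap, and the root of it is that the form $\{x,y\}_1 = (x,Vy)$ on $M_1$ is not Hermitian and not even reflexive. Computing directly with the adjoint relations $(Fa,b)=\sigma(a,Vb)$ and $(Va,b)=\sigma^{-1}(a,Fb)$, the \emph{left} radical of the mod-$p$ form on $M_1/pM_1$ is $VM_0/pM_1$ while the \emph{right} radical is $FM_0/pM_1$; these two $2$-dimensional subspaces coincide only when $M$ is superspecial. Consequently there is no well-defined induced form on a single quotient ``$(M_1/pM_1)/\mathrm{rad}$''---what one actually gets is a perfect pairing between $(M_1/pM_1)/VM_0$ and $(M_1/pM_1)/FM_0$. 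Your computation that $VM_0/pM_1$ is totally isotropic amounts precisely to the statement that it lies in the left radical, which is tautological (it \emph{is} the left radical) and puts no constraint on $\dim(VM_0\cap FM_0)/pM_1$. Indeed, for a general sesquilinear form of rank $2$ on a $4$-dimensional space the left and right radicals can intersect trivially, so the ``hence of dimension at most $1$'' step does not follow.

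The deeper issue is that your argument never invokes supersingularity of $M$, which is essential. Everything you use (the colengths in $pD^{\vee}\subset D\subset D^{\vee}$, the adjoint relations, the duality $M_0=M_1^{\perp}$) holds for any self-dual $\GU(2,2)$-type Dieudonn\'e module, including ordinary ones; but for an ordinary module one has $FM_0\cap VM_0=pM_1$ and $FM_0+VM_0=M_1$, so the conclusion fails. The paper's proof is both shorter and makes the use of supersingularity explicit: since $X$ is supersingular, the $\sigma$-semilinear map $\bar F\colon M_0/VM_1\to M_1/VM_0$ induced by Frobenius (well-defined because $F(VM_1)=pM_1\subset VM_0$) is not invertible, so its image $(FM_0+VM_0)/VM_0$ has dimension at most $1$; applying $V^{-1}$ transports this directly to $\dim(D+\tau D)/D\leq 1$. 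To salvage your Hermitian-form route you would at minimum need to locate where supersingularity enters and to replace the ill-defined single-space quotient by the correct two-sided pairing, at which point the argument essentially collapses back to the paper's.
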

\begin{proof}
Let $M$ be the {\Dieu} described in Lemma \ref{set-description-pre}. Since $M$ is supersingular, $F: M_{0}/VM_{1}\rightarrow M_{1}/VM_{0} $ is not invertible and thus $\dim FM_{0}+VM_{0}/VM_{0}\leq 1$. The result follows since $D$ corresponds to $M_{0}$.
 \end{proof}

\begin{proposition}\label{chain1}
Given $D\in \mathcal{M}(\FF)$, we can find a $\tau$-stable lattice $L(D)$ that it either fits in
\begin{equation}\label{0-4type}pL(D)^{\vee}\subset pD^{\vee}\subset D\subset L(D)\subset L(D)^{\vee}\subset D^{\vee}\end{equation} 
or it fits in 
\begin{equation}\label{4-0type}pD^{\vee}\subset pL(D)^{\vee}\subset L(D)\subset D\subset D^{\vee}\subset L(D)^{\vee}.\end{equation}
\end{proposition}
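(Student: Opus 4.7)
The plan is to handle the $\tau$-stable case trivially, use Norman--Oort to construct a $\tau$-stable lattice $L(D) := D + \tau(D)$ in the generic case, and then select between chains $1$ and $2$ by testing integrality of the pairing $\{D, \tau(D)\}$.

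If $\tau(D) = D$, the lattice $D$ descends to a $\ZZ_{p^2}$-lattice $L \subset C_0$ with $L \otimes_{\ZZ_{p^2}} W_0 = D$, and the conditions of Lemma \ref{set-description} translate to $pL^\vee \subset^2 L \subset^2 L^\vee$; thus $L$ is a type-$\{0,2\}$ vertex lattice and both chains hold with $L(D) = D$. Now suppose $\tau(D) \neq D$. Since $\tau = p^{-1}F^2$ on $N_0$, this is equivalent to the Dieudonn\'{e} module $M$ of Lemma \ref{set-description-pre} having $a(M) = 1$. Lemma \ref{NO_lemma}(2) gives $F^2 M + pM = V^2 M + pM$; intersecting with $N_0$ and using $V^2|_{N_0} = p^2 F^{-2} = p\tau^{-1}$ rewrites this as
\[
D + \tau(D) = D + \tau^{-1}(D).
\]
Applying $\tau$ to both sides shows $\tau^2(D) \subset D + \tau(D)$, so $L(D) := D + \tau(D)$ is closed under $\tau$; volume-preservation of $\tau$ (which has unit determinant on $N_0$, from $\det \tau = p^{-4}\det F^2|_{N_0}$) upgrades this to $\tau(L(D)) = L(D)$. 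Combining with Lemma \ref{Pappas} gives $[L(D):D] = 1$ and dually $[D^\vee : L(D)^\vee] = 1$.

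By Hermitian symmetry of $\{\cdot,\cdot\}$ and the $\sigma^2$-equivariance $\{\tau x, \tau y\} = \sigma^2\{x,y\}$, the condition $L(D) \subset L(D)^\vee$ reduces to $\{D, \tau(D)\} \subset W_0$. If this integrality holds, a length count in $D \subset L(D) \subset L(D)^\vee \subset D^\vee$ forces $L(D) = L(D)^\vee$, making $L(D)$ a type-$1$ vertex lattice and establishing chain $1$. In the remaining case $\{D, \tau(D)\} \not\subset W_0$, I expect the correct choice to be $L'(D) := D \cap \tau(D)$, which ought to be $\tau$-stable and satisfy the type-$3$ condition $pL'(D)^\vee \subset L'(D)$, yielding chain $2$.

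The main obstacle lies in the non-integral case. First, $\tau$-stability of $L'(D) = D \cap \tau(D)$, i.e.\ the equality $\tau(D) \cap \tau^2(D) = D \cap \tau(D)$, is not immediate from the Norman--Oort identity alone and will require exploiting the specific non-integrality of the pairing to control the $\tau$-orbit of $D$ modulo $pL(D)$. Second, the type-$3$ condition reduces via $L'(D)^\vee = D^\vee + \tau(D^\vee)$ and $\tau$-equivariance to the single inclusion $pD^\vee \subset \tau(D)$: while $pD^\vee \subset D$ is automatic from $D \in \calM$, this parallel inclusion must use $\{D, \tau(D)\} \not\subset W_0$ together with the Kottwitz-type bound $[D^\vee:D] = 2$ (which forces $\{D, \tau(D)\} \subset p^{-1}W_0$) in a detailed linear-algebra analysis inside the $W_0$-module $p^{-1}D/pD^\vee$.
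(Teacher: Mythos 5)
Your proposal breaks down in the non-$\tau$-stable case, and for two independent reasons.

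First, the hypothesis of Lemma \ref{NO_lemma} fails. You apply Norman--Oort to the rank-$8$ Dieudonn\'{e} module $M=M_0\oplus M_1$ of Lemma \ref{set-description-pre}, claiming $\tau(D)\neq D$ is equivalent to $a(M)=1$. But the $\ZZ_{p^2}$-grading forces $a(M)\geq 2$ always: since $F,V$ swap $M_0$ and $M_1$ one has $FM+VM=(FM_1+VM_1)\oplus(FM_0+VM_0)$, so $a(M)=\dim M_0/(FM_1+VM_1)+\dim M_1/(FM_0+VM_0)$, and supersingularity (the $\sigma$-linear map $F\colon M_0/VM_1\to M_1/VM_0$ not being invertible, as in the proof of Lemma \ref{Pappas}) makes each summand at least $1$. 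So the hypothesis $a(M)=1$ never holds, and the identity $F^2M+pM=V^2M+pM$, hence $D+\tau(D)=D+\tau^{-1}(D)$ and the $\tau$-stability of $D+\tau(D)$, is not available from this route. Norman--Oort is exactly the right tool in the paper's $p$-split/EL case (where $M$ has rank $4$, no grading, and $a(M)=1$ generically), but not here.

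Second, your fallback dichotomy is vacuous: $\{D,\tau(D)\}\subset W_0$ holds for every $D\in\calM(\FF)$. From $M_1=VD^\vee$ and the Dieudonn\'{e} inclusions $pM_1\subset FM_0\subset M_1$, one gets $pD^\vee\subset\tau(D)\subset D^\vee$; this is precisely display \eqref{condition} in the paper's proof. Thus your "non-integral" branch is never entered, and your argument would only ever produce chain \eqref{0-4type}, i.e.\ type-$1$ and type-$\{0,2\}$ vertex lattices, contradicting the existence of the type-$3$ strata in Theorem \ref{main-result-inert}. The genuine dichotomy is whether $D+\tau(D)$ is $\tau$-stable. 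When it is, the always-valid containments above immediately give \eqref{0-4type}. When it is not, the paper shows $D\cap\tau(D)$ must be $\tau$-stable by contradiction: assuming otherwise gives $\bigcap_l\tau^l(D)=p\tau(D)^\vee$, and a careful analysis of the chain $L_j(D)=D+\tau(D)+\cdots+\tau^j(D)$ (via \cite[Prop.\ 2.17]{RZ-Aoms} and Lemma \ref{Pappas}) forces $L_d(D)\subset L_d(D)^\vee$ and then $L_d(D)=D+\tau(D)$, contradicting the hypothesis; one then checks \eqref{4-0type} directly for $L(D)=D\cap\tau(D)$. Some argument of this type, rather than an integrality test, is what you need to cover the case that $D+\tau(D)$ is not $\tau$-stable, which genuinely occurs.
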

\begin{proof}

Suppose $D$ is $\tau$-stable, then there is nothing to prove.  Otherwise, Lemma \ref{Pappas} shows that $D\subset^{1} D+\tau(D)$. Suppose that $D+\tau(D)$ is $\tau$-stable, then we set $L(D)=D+\tau(D)$. Notice that Since $D\in \mathcal{M}(\FF)$, we have \begin{equation}\label{condition}pD^{\vee}\subset D\subset D^{\vee} \text{ and } pD^{\vee}\subset \tau(D)\subset D^{\vee}.\end{equation} Therefore $L(D)=D+\tau(D)\subset L(D)^{\vee}=D^{\vee}\cap \tau(D)^{\vee}$ and  $L(D)$ fits in \eqref{0-4type}.

Now suppose $D+\tau(D)$ is not $\tau$-stable and we claim that $D\cap \tau(D)$ is $\tau$-stable. Indeed, by \eqref{condition}, we have $p\tau(D)^{\vee}\subset^{1} D\cap \tau(D)$ and $p\tau(D)^{\vee}\subset^{1} \tau(D)\cap\tau^{2}(D)$. Hence \begin{equation}\label{three-intersection}D\cap \tau(D)\cap \tau^{2}(D)=p\tau(D)^{\vee}.\end{equation} Now, we consider the family of lattices $L_{j}(D)=D+\tau(D)+\cdots +\tau^{j}(D)$. By \cite{RZ-Aoms} Proposition 2.17, there is a minimal $d$ such that $L_{d}(D)=\tau(L_{d}(D))$ and we set $L(D)=L_{d}(D)$.  By \ref{Pappas} , we deduce that $$\tau(L_{j-2}(D))\subset^{1} L_{j-1}(D)\subset^{1} L_{j}(D)$$ and $$\tau(L_{j-2}(D))\subset^{1} \tau(L_{j-1}(D))\subset^{1} L_{j}(D).$$ It follows that \begin{equation}\label{inter}\tau(L_{j-2}(D))=L_{j-1}(D)\cap \tau(L_{j-1}).\end{equation} By \eqref{condition} $D+\tau(D)\subset D^{\vee}\subset p^{-1}\tau(D)$ and  $\tau(D)+\tau^{2}(D)\subset \tau(D)^{\vee}\subset p^{-1}\tau(D)$. Then we have $D+\tau(D)+\tau^{2}(D)\subset p^{-1}\tau(D)$. Thus $$L_{d}(D)=D+\tau(D)+\cdots +\tau^{d}(D)\subset p^{-1}\tau(D)+p^{-1}\tau^{2}(M)+\cdots p^{-1}\tau^{d-1}(M)\subset p^{-1}L_{d-1}(D).$$ Applying $\tau^{l}$ for any integer $l$ on both sides, we get $L_{d}(D)\subset p^{-1} \bigcap_{l\in \ZZ}\tau^{l}(L_{d-1}(D))$. It follows from \eqref{inter}, $\bigcap_{l\in\ZZ}L_{d-1}(D)= \bigcap_{l\in \ZZ}D$. Hence $$L_{d}(D)\subset p^{-1} \bigcap_{l\in \ZZ}\tau^{l}(L_{d-1}(D))= p^{-1}\bigcap_{l\in \ZZ}\tau^{l}(D).$$ Now we apply \eqref{three-intersection} and we find $$L_{d}(M)\subset p^{-1} \bigcap_{l\in \ZZ}\tau^{l}(L_{d-1}(D))= p^{-1}\bigcap_{l\in \ZZ}\tau^{l}(D)=\bigcap_{l\in \ZZ} \tau^{l}(D)^{\vee}=L_{d}(D)^{\vee}.$$ Then $L_{d}(D)$ satisfies \eqref{0-4type} and it follows that the indices among the lattices has to be $pL_{d}(D)^{\vee}\subset^{1} pD^{\vee}\subset^{2} D\subset^{1} L_{d}(D)\subset^{0} L_{d}(D)^{\vee}\subset^{1} D^{\vee}$. This implies $L_{d}(D)=D+\tau(D)$ and this is a contradiction to the assumption that $D+\tau(D)$ is not $\tau$ stable. This proves in this case $D\cap \tau(D)$ is $\tau$-stable and we set $L(D)=D\cap\tau(D)$. Then one checks easily as before that $L(D)$ satisfies \eqref{4-0type} since $L(D)^{\vee}=D^{\vee}+\tau(D)^{\vee}$.
\end{proof}

It follows from the proof of the previous proposition $L(D)$ gives a vertex lattice $L$ by $L=L(D)^{\tau=1}$.

\begin{definition}
For $i=1, 3, 02$, we  define $$\calM_{\{i\}}(\FF)=\bigcup_{L_{i}}\calM_{L_{i}}(\FF)$$ where $L_{i}$ runs through all the vertex lattices of type $1, 3$ and $\{0,2\}$. We will refer to these as the closed Bruhat-Tits strata of type $i$ of $\calM(\FF)$.
\end{definition}

\begin{corollary}We have a decomposition
$\calM(\FF)=\calM_{\{1\}}(\FF)\cup\calM_{\{3\}}(\FF)\cup \calM_{\{02\}}(\FF)$.
\end{corollary}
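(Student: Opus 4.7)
The plan is to derive the decomposition directly from Proposition~\ref{chain1}. For each $D \in \calM(\FF)$, that proposition supplies a $\tau$-stable $W_0$-lattice $L(D) \subset N_0$ fitting in either chain~\eqref{0-4type} or chain~\eqref{4-0type}. By the standard descent identifying $\tau$-stable $W_0$-lattices in $N_0$ with $\ZZ_{p^2}$-lattices in $C_0 = N_0^{\tau=1}$, we write $L(D) = L_{W_0}$ where $L := L(D)^{\tau=1}$. Both chains contain the inclusions $pL_{W_0}^\vee \subset L_{W_0} \subset L_{W_0}^\vee$, and since integral duality and $-\otimes_{\ZZ_{p^2}} W_0$ commute (the extension $W_0/\ZZ_{p^2}$ being unramified), these descend to $pL^\vee \subset L \subset L^\vee$ in $C_0$, so $L$ is a vertex lattice.

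The next step is to pin down the type of $L$ and the stratum containing $D$ by a colength count. For chain~\eqref{0-4type}, the three outer colengths $[pD^\vee : pL_{W_0}^\vee]$, $[L_{W_0} : D]$ and $[D^\vee : L_{W_0}^\vee]$ all coincide: the first and third agree by multiplication by $p$, and the second and third by integral duality with respect to $\{\cdot,\cdot\}$. Calling this common value $a$, and using $[D^\vee : pD^\vee] = 4$ together with $[D : pD^\vee] = 2$ from Lemma~\ref{set-description}, one obtains the relation $2a + [L^\vee : L] = 2$. Hence either $a = 1$ and $L$ is of type~$1$ with $D \subsetneq L_{W_0}$, or $a = 0$ and $L$ is of type~$\{0,2\}$ with $D = L_{W_0}$; in both subcases $D \subset L_{W_0}$, placing $D$ in $\calM_{L_1}(\FF) \cup \calM_{L_{02}}(\FF)$. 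A symmetric analysis of chain~\eqref{4-0type}, with $b := [D : L_{W_0}]$, yields $2b + [L : pL^\vee] = 2$, forcing $L$ to be of type~$3$ (with $L_{W_0} \subsetneq D$) or of type~$\{0,2\}$ (with $D = L_{W_0}$); in both subcases $L_{W_0} \subset D$, placing $D$ in $\calM_{L_3}(\FF) \cup \calM_{L_{02}}(\FF)$.

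Combining the two cases and taking a union over all such vertex lattices gives the inclusion $\calM(\FF) \subset \calM_{\{1\}}(\FF) \cup \calM_{\{3\}}(\FF) \cup \calM_{\{02\}}(\FF)$, and the reverse inclusion is tautological from Definition~\ref{lattice-stratum-set}. Since the substantive work has already been carried out in Proposition~\ref{chain1}, the only remaining task is this colength bookkeeping; the main pitfall to watch for is conflating $W_0$-colengths in $N_0$ with $\ZZ_{p^2}$-colengths in $C_0$, which is harmless once one notes that the unramified extension $W_0/\ZZ_{p^2}$ preserves both formation of integral duals and lengths of finite-length modules.
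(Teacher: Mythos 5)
Your argument is correct and follows the paper's intended route exactly: apply Proposition~\ref{chain1} to produce the $\tau$-stable lattice $L(D)$, descend it to a vertex lattice $L=L(D)^{\tau=1}$ (a fact the paper records in the remark immediately after that proposition), and read off the type of $L$ from the colengths forced by Lemma~\ref{set-description}. The colength bookkeeping ($2a+[L^\vee:L]=2$, resp.\ $2b+[L:pL^\vee]=2$) is sound and correctly matches each case to one of the strata of Definition~\ref{lattice-stratum-set}; the paper merely compresses this into a one-line citation of the same two results.
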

\begin{proof}
This follows from the description of $\calM(\FF)$ in Lemma \ref{set-description} and Proposition \ref{chain1}. 
\end{proof}

\subsection{Deligne-Lusztig varieties for unitary groups} Let $G_{0}$ be a connected reductive group over $\FF_{p}$ and let $G=G_{0,\FF}$ be the base change to $\FF$. Let $T$ be a maximal torus contained in $G$ and $B$ a Borel subgroup containing $T$. We can assume $T$ is defined over $\FF_{p}$. Let $W$ be the Weyl group corresponding to $(T, B)$. Then $W$ carries an action by $\sigma$ induced by the Frobenius action on $G$.

Let $\Delta^{*}=\{\alpha_{1}, \cdots, \alpha_{n}\}$ be the simple roots corresponding to $(T, B)$ and let $s_{i}$ be the simple reflection corresponding to the 
root $\alpha_{i}$. For $I\subset \Delta^{*}$, let $W_{I}$ be the subgroup of $W$ generated by $\{s_{i}, i\in I\}$.  Consider $P_{I}$ the corresponding  parabolic subgroup of $G$.  For another set $J\subset \Delta^{*}$, we have a decomposition $$G=\bigcup_{w\in W_{I} \backslash W/ W_{J}}P_{I}wP_{J}.$$ and hence a bijection
$$P_{I}\backslash G/ P_{J} \cong W_{I} \backslash W/ W_{J}.$$
We define the relative position map $$\text{inv}: G/P_{I}\times G/P_{J}\rightarrow W_{I} \backslash W/ W_{J} $$ by sending $(g_{1}, g_{2})$ to $g^{-1}_{1}g_{2}\in P_{I}\backslash G/ P_{J} $. 

\begin{definition}
For a given $w\in W_{I}\backslash W/ W_{\sigma(I)}$, the Deligne-Lusztig variety $X_{P_{I}}(w)$ is the locally closed reduced subscheme of $G/P_{I}$ whose $\FF$-points is described by $$X_{P_{I}}(w)=\{gP_{I}\in G/P_{I}; \mathrm{inv}(g, \sigma(g))=w\}.$$
\end{definition}

Let $G_{0}$ be the unitary group over $\FF_{p}$ defined by a split Hermitian space $(V, [\cdot,\cdot])$ of dimension 4 over $\FF_{p^{2}}$.  The group $G=G_{0}\otimes \FF$ can be identified with $\GL(4)$ over $\FF$. We choose $(T, B)$ such that the torus $T$ corresponding to the diagonal matrices and $B$ the Borel corresponding  to the uppertriangular matrices. Write $V\otimes_{\FF_{p^{2}}} \FF$ as $V_{\FF}=V_{0}\oplus V_{1}$. Then $G=\GL(V_{0})$. Define the parabolics $P_{1,3}$ with Levi subgroup $\GL(1)\times \GL(3)$ and $P_{3, 1}$ with Levi subgroup $\GL(3)\times \GL(1)$. Equip $V_{0}$ with the form $(x\otimes a, y\otimes b)=[x, y]\otimes ab^{p}$. For $U\subset V_{0}$ a subspace, we denote by $U^{\perp}$ the orthogonal complement of $U$ with respect to $(\cdot,\cdot)$. We can choose a basis of $V_{0}$ denoted by $\{e_{1}, e_{2}, e_{3}, e_{4}\}$. We consider the elements in $s_{1}=(1,2)$ and $s_{2}=(2, 3)$ and denote by $w_{1}=s_{1}$ and $w_{2}=s_{1}s_{2}$. We are concerned with the following subvariety of the flag variety $G/P_{1,3}$
\begin{equation}\label{DL_1}
Y^{(+)}_{V_{0}}=\{U\subset V_{0}: \dim_{\FF}U=1, U\subset U^{\perp}\};
\end{equation}
which is also equivalent to the following subvariety of $G/P_{3,1}$ by taking the orthogonal complement of $U$
\begin{equation}\label{DL_3}
Y^{(-)}_{V_{0}}=\{U\subset V_{0}: \dim_{\FF}U=3, U^{\perp}\subset U\}.
\end{equation}

Note that by choosing a suitable coordinate, $Y^{(+)}_{V_{0}}\cong Y^{(-)}_{V_{0}}$ are isomorphic to the Fermat hypersurface $x_{0}^{p+1}+x_{1}^{p+1}+x^{p+1}_{2}+x^{p+1}_{3}=0$. 

We can decompose  $Y^{(\pm)}_{V_{0}}$ into locally closed stratums as in \cite[Theorem 2.15]{Vol-can10}.

\begin{theorem}\label{DL-stratification-inert}
There is a decomposition of $Y^{(-)}_{V_{0}}$ into locally closed subvarieties $$Y^{(-)}_{V_{0}}= X_{P_{3,1}}(1)\sqcup X_{B}(w_{1})\sqcup X_{B}(w_{2}).$$ The subvariety $ X_{P_{3, 1}}(1)$ is closed of dimension $0$, the subvariety $X_{B}(w_{1})$ is of dimension $1$ and an irreducible component of its closure is a $\PP^{1}$, the subvariety $X_{B}(w_{2})$ is open and dense in $Y^{(-)}_{V_{0}}$. Similarly we have $$Y^{(+)}_{V_{0}}= X_{P_{1,3}}(1)\sqcup X_{B}(w_{1})\sqcup X_{B}(w_{2}).$$ 
\end{theorem}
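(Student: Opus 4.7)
The plan is to follow the strategy of Vollaard \cite[Theorem 2.15]{Vol-can10} (see also \cite{VW-invent11}), adapting it from the $\GU(1,n-1)$ setting to the present group $G=\GL(V_0)$ carrying a twisted Frobenius $\sigma$ coming from the unitary descent to $\FF_p$. The first task is to pin down $\sigma$ explicitly: because $(\cdot,\cdot)$ is linear in the first variable and $p$-semilinear in the second, $\sigma$ sends a $d$-dimensional subspace $U\subset V_0$ to the $(4-d)$-dimensional subspace $\sigma(U)=(U^{(p)})^{\perp}$. In particular $\sigma(P_{3,1})=P_{1,3}$, and the defining condition $U^{\perp}\subset U$ of $Y^{(-)}_{V_0}$ translates into $\sigma(U)\subset U$, i.e., $\inv(U,\sigma(U))$ lies in the distinguished double coset in $W_{P_{3,1}}\backslash W/W_{P_{1,3}}$ corresponding to a $1$-space sitting inside a $3$-space. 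After choosing a $\sigma$-stable pair $T\subset B$ refining $P_{3,1}$, the projection $G/B\to G/P_{3,1}$ is $\sigma$-equivariant.

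Next I would stratify $Y^{(-)}_{V_0}$ by refining the partial flag $\sigma(U)\subset U$ to a full flag. On the open locus where $\sigma(U)\ne U^{\perp}$, the chain $0\subset U^{\perp}\subset \sigma(U)+U^{\perp}\subset U\subset V_0$ is a canonical full flag of dimensions $(0,1,2,3,4)$, producing a morphism into $G/B$. The resulting relative position $\inv(gB,\sigma(gB))\in W=S_4$ can take only two values, as it must be compatible with the coarser $P_{3,1}$-level condition $\sigma(U)\subset U$. A direct computation identifies these two values as $w_1=s_1$ (length $1$) and $w_2=s_1s_2$ (length $2$), yielding the locally closed strata $X_B(w_1)$ and $X_B(w_2)$ of dimensions $1$ and $2$. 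On the exceptional locus where $\sigma(U)=U^{\perp}$, no canonical refinement exists, so the stratum lives naturally as the closed Deligne-Lusztig variety $X_{P_{3,1}}(1)\subset G/P_{3,1}$, consisting of the $\sigma$-fixed (equivalently $\FF_{p^2}$-rational) points of $Y^{(-)}_{V_0}$, hence of finitely many points.

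The dimensions $0,1,2$ match the lengths of $1,w_1,w_2$ and the pure $2$-dimensionality of $Y^{(-)}_{V_0}$, so $X_B(w_2)$ is automatically open dense. The closure $\overline{X_B(s_1)}$ in $G/B$ is a union of $\PP^1$-fibers for the minimal parabolic associated to $s_1$, and the projection $G/B\to G/P_{3,1}$ embeds each irreducible component as a $\PP^1$. The identification of $Y^{(\pm)}_{V_0}$ with the Fermat hypersurface $x_0^{p+1}+x_1^{p+1}+x_2^{p+1}+x_3^{p+1}=0$ is then checked by writing the Hermitian condition $(u,u)=0$ in coordinates for a generator $u=\sum_i x_ie_i$ of $U^{\perp}$, which reduces to $\sum_i x_i\cdot x_i^p=0$. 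The corresponding statement for $Y^{(+)}_{V_0}$ follows by symmetry under $U\mapsto U^{\perp}$, exchanging $P_{3,1}$ and $P_{1,3}$.

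The hard part will be step two: constructing the refinement morphism from the open part of $Y^{(-)}_{V_0}$ into $G/B$ scheme-theoretically (not merely on $\FF$-points) and verifying that its image lands exactly in the two Bruhat cells $X_B(w_1)$ and $X_B(w_2)$, rather than in cells indexed by other Weyl elements of the same length. Once that refinement and its combinatorial identification are in place, the closure relations, the Fermat identification, and the analogous statement for $Y^{(+)}_{V_0}$ are standard consequences of Deligne-Lusztig theory on $G/B\cong\GL_4(\FF)/B$.
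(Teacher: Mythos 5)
Your proposal is a valid alternative to the paper's proof, and it sticks close to the Vollaard/Vollaard--Wedhorn template that the paper cites as its model, but the route through the decomposition is organized rather differently. The paper does not pin down a formula for $\sigma$ on the Grassmannian at all and never constructs a refining flag morphism to $G/B$; instead it characterizes the strata by intersection conditions with the \emph{square} of the Frobenius, $\tau=\sigma^{2}$. Concretely, the paper puts $U$ in the $0$-dimensional stratum when $U$ is $\tau$-rational, in $X_{B}(w_{1})$ when $U\cap\tau(U)$ is a $\tau$-invariant plane, and in $X_{B}(w_{2})$ when $U\cap\tau(U)\cap\tau^{2}(U)$ is a line, and then exhibits each irreducible component of $\overline{X_{B}(w_{1})}$ as a $\PP^{1}$ by parametrizing $3$-planes $U$ through a fixed rational isotropic plane $U'$ via the chain $U^{\perp}\subset U'=U\cap\tau(U)\subset U$. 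Your approach instead works directly with the unitary Frobenius $\sigma$, refines the partial flag to a full flag $0\subset U^{\perp}\subset\sigma(U)+U^{\perp}\subset U\subset V_{0}$, and identifies the resulting relative positions in $W$. Both strategies can be made to work; yours gives a cleaner conceptual picture in terms of Bruhat cells and is closer to what you would need if you wanted to push the argument to larger $n$ systematically, whereas the paper's is shorter and avoids committing to a choice of refining flag.

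Two points deserve scrutiny. First, the identity $\sigma(U)=(U^{(p)})^{\perp}$ combined with the claimed equivalence $U^{\perp}\subset U\Longleftrightarrow\sigma(U)\subset U$ is not internally consistent as stated: applying $\perp$ to $(U^{(p)})^{\perp}\subset U$ gives $U^{\perp}\subset U^{(p)}$, which is not the same condition as $U^{\perp}\subset U$. The intended statement is almost certainly that the sesquilinear perp itself realizes the unitary Frobenius on the Grassmannian (so $\sigma(U)=U^{\perp}$ after the right normalizations), at which point the equivalence is a tautology; but as written the formula carries a spurious extra twist and should be rederived carefully from the definition of $(\cdot,\cdot)$ on $V_{0}$. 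Second, your refining flag $0\subset U^{\perp}\subset\sigma(U)+U^{\perp}\subset U\subset V_{0}$ differs from the paper's intermediate subspace $U\cap\tau(U)$, and you should verify either that they agree on the relevant stratum or that your flag also produces exactly the two relative positions $w_{1}=s_{1}$ and $w_{2}=s_{1}s_{2}$; this is the step you yourself flag as the hard part, and the verification, while routine, is the content of the lemma rather than a consequence of general principles. Once those two items are fixed, the closure relations, the $\PP^{1}$-component description via the $s_{1}$-minimal parabolic, the Fermat identification from $(u,u)=0$, and the passage to $Y^{(+)}_{V_{0}}$ by taking $U\mapsto U^{\perp}$ are all correct and match the paper.
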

\begin{proof}

By Definition \ref{DL_3},  $X_{P_{3,1}}(1)$ is the subvariety consisting of those $U$ that is $\sigma$-invariant and $X_{P_{3,1}}(w_{1})$ consists of $U$ that is not $\sigma$-invariant. Therefore we have $Y^{(-)}_{V_{0}}= X_{P_{3,1}}(1)\sqcup X_{P_{3,1}}(w_{1})$. We can further decompose $X_{P_{3,1}}(w_{1})$ by $X_{P_{3,1}}(w_{1})=X_{B}(w_{1})\sqcup X_{B}(w_{2})$. Indeed the two dimensional stratum $X_{B}(w_{2})$ consist of all $U$ of dimension $3$ in $V_{0}$ such that for $\tau=\sigma^{2}$, $U\cap \tau(U)$ is of dimension $2$ and $U\cap \tau(U)\cap \tau^{2}(U)$ is of dimension $1$. The one dimensional stratum $X_{B}(w_{1})$ consists of all $U$ of dimension $3$ such that $U\cap \tau(U)$ is a $\tau$-invariant plane. 

Notice that for $U\in X_{B}(w_{1})$, we have $U\cap\tau(U)= (U\cap\tau(U))^{\perp}$ and we have for a chain 
$$U^{\perp}\subset U\cap \tau(U)\subset U$$ with $U\cap\tau(U)$ a $\tau$-invariant plane and $U^{\perp}$ gives a point on $\PP^{1}(U\cap \tau(U))$.
Conversely for each maximal rational isotropic plane $U^{\prime}\subset V_{0}$ and a three dimensional $U$ that $U^{\prime}\subset U$, we have $U\in X_{B}(w_{1})$. In fact, taking the dual of the inclusion $U^{\prime}\subset U$ gives $U^{\perp}\subset U^{\prime\perp}=U^{\prime}\subset U$.   Therefore each irreducible component of the one dimensional strata is a $\PP^{1}$.  
\end{proof}

\subsection{Ekedahl-Oort stratifications of lattice strata}

For a vertex lattice $L_{1}$ of type $1$ and a vertex lattice $L_{3}$ of type $3$, we define the set theoretic \emph{Ekedahl-Oort stratification} of $\calM_{L_{1}}(\FF)$ and $\calM_{L_{3}}(\FF)$ in this subsection. We set

\begin{equation}\label{EO-L1}
\begin{split}
&\mathcal{M}^{\circ}_{L_{1}}(\FF)=\mathcal{M}_{L_{1}}(\FF)-( \mathcal{M}_{\{02\}}(\FF)\cup \mathcal{M}_{\{3\}}(\FF));\\
&\calM_{L_{1},\{3\}}(\FF)=\calM_{L_{1}}(\FF)\cap \calM_{\{3\}}(\FF);\\
&\calM^{\circ}_{L_{1},\{3\}}(\FF)=(\calM_{L_{1}}(\FF)\cap \calM_{\{3\}}(\FF))- \calM_{\{0, 2\}}(\FF);\\
&\calM_{L_{1},\{0, 2\}}(\FF)=\calM_{L_{1}}(\FF)\cap \calM_{\{0, 2\}}(\FF). \\
\end{split}
\end{equation}
We call the decomposition $$\calM_{L_{1}}(\FF)= \mathcal{M}^{\circ}_{L_{1}}(\FF)\sqcup \calM^{\circ}_{L_{1},\{3\}}(\FF)\sqcup \calM_{L_{1},\{0, 2\}}(\FF)$$ the \emph{Ekedahl-Oort stratification} for $\calM_{L_{1}}(\FF)$. Let $V(L_{1})$ be the Hermitian space defined by $L_{1,W_{0}}/pL^{\vee}_{1,W_{0}}$ with the Hermitian form induced by $\{\cdot,\cdot\}$. Notice that the square of the Frobenius $\tau=\sigma^{2}$ on $V(L_{1})$ is induced by the operator $\tau$ on $L_{1, W_{0}}$ and we abuse the notation here. The following theorem describes the above stratification in terms of Deligne-Lusztig varieties. 

\begin{proposition}\label{DL1}
There is a bijection between $\mathcal{M}_{L_{1}}(\FF)$ and $Y^{(-)}_{V(L_{1})}(\FF)$. This bijection is compatible with the stratification on both sides in the sense that
\begin{enumerate}
\item There is a bijection between $\calM^{\circ}_{L_{1}}(\FF)$ and $X_{B}(w_{2})(\FF)$;
\item There is a bijection between $\calM^{\circ}_{L_{1},\{3\}}(\FF)$ and $X_{B}(w_{1})(\FF)$;
\item There is a bijection between $\calM_{L_{1}, \{0, 2\}}(\FF)$ and $X_{P_{3,1}}(1)(\FF)$.
\end{enumerate}
\end{proposition}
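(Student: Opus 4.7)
The plan is to construct an explicit bijection via reduction modulo $p$, then verify it respects the three strata case by case.

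First, I would build the map $f\colon \mathcal{M}_{L_{1}}(\FF)\to Y^{(-)}_{V(L_{1})}(\FF)$ by sending $D$ to $\bar D := D/pL_{1,W_{0}}$. Since $L_{1}$ is of type $1$, we have $L_{1,W_{0}}=L_{1,W_{0}}^{\vee}$, so $D\subset L_{1,W_{0}}$ gives $L_{1,W_{0}}\subset D^{\vee}$; combined with $pD^{\vee}\subset D$ this yields $pL_{1,W_{0}}\subset pD^{\vee}\subset D\subset L_{1,W_{0}}\subset D^{\vee}\subset p^{-1}L_{1,W_{0}}$. A length count using $[D^{\vee}:D]=2$ and the duality $[L_{1,W_{0}}:D]=[D^{\vee}:L_{1,W_{0}}]$ forces $[L_{1,W_{0}}:D]=1$, so $\dim_{\FF}\bar D=3$. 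A direct computation identifies the orthogonal $\bar D^{\perp}$ inside $V(L_{1})$ with the image of $pD^{\vee}$ in $L_{1,W_{0}}/pL_{1,W_{0}}$, and since $pD^{\vee}\subset D$ this gives $\bar D^{\perp}\subset\bar D$, placing $\bar D$ in $Y^{(-)}_{V(L_{1})}$.

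Next I would exhibit the inverse. Given $U\in Y^{(-)}_{V(L_{1})}(\FF)$, take $D$ to be the preimage of $U$ under $L_{1,W_{0}}\twoheadrightarrow V(L_{1})$. Then $pL_{1,W_{0}}\subset D\subset L_{1,W_{0}}$ with $[L_{1,W_{0}}:D]=1$, hence dually $[D^{\vee}:L_{1,W_{0}}]=1$ and $[D^{\vee}:D]=2$. The identification $(pD^{\vee}\cap L_{1,W_{0}})/pL_{1,W_{0}}=\bar D^{\perp}$, together with $\bar D^{\perp}\subset\bar D=U$, gives $pD^{\vee}\cap L_{1,W_{0}}\subset D$. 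Because $pL_{1,W_{0}}\subset D$ implies $D^{\vee}\subset p^{-1}L_{1,W_{0}}$, we in fact have $pD^{\vee}\subset L_{1,W_{0}}$, hence $pD^{\vee}\subset D$, showing $D\in\mathcal{M}_{L_{1}}(\FF)$ by Lemma \ref{set-description}. This is manifestly inverse to $f$.

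For the compatibility with the stratifications, I would observe that the $\sigma^{2}$-semilinear operator $\tau$ on $V(L_{1})$ has fixed points $L_{1}/pL_{1}$, a split Hermitian space of dimension $4$ over $\FF_{p^{2}}$; so $\tau$ on $V(L_{1})$ is precisely the Frobenius used to define the Deligne-Lusztig strata of $Y^{(-)}_{V(L_{1})}$. For part $(3)$, $D$ is $\tau$-stable iff $\bar D$ is $\tau$-fixed, in which case $D=L_{W_{0}}$ for a vertex lattice $L\subset L_{1}$; the chain $pD^{\vee}\subset^{2} D\subset^{2} D^{\vee}$ forces $L$ to be of type $\{0,2\}$. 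For part $(2)$, if $\bar D\in X_{B}(w_{1})$, then by the proof of Theorem \ref{DL-stratification-inert} the plane $\bar U=\bar D\cap\tau(\bar D)$ is $\tau$-stable and satisfies $\bar U=\bar U^{\perp}$. Its preimage $U$ in $L_{1,W_{0}}$ is then $\tau$-stable with $pU^{\vee}\subset^{0} U\subset^{4} U^{\vee}$, so $U=L_{3,W_{0}}$ for a type $3$ vertex lattice $L_{3}\subset L_{1}$ with $L_{3,W_{0}}\subset D$; the condition $\bar D\notin X_{P_{3,1}}(1)$ corresponds to $D$ not being $\tau$-stable, i.e.\ $D\notin\mathcal{M}_{\{0,2\}}$. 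Conversely, any such $L_{3}\subset L_{1}$ provides a $\tau$-invariant totally isotropic plane $\bar L_{3,W_{0}}\subset\bar D$, which by dimension must equal $\bar D\cap\tau(\bar D)$. Part $(1)$ is then the complement.

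The main obstacle will be part $(2)$: verifying in both directions that a $\tau$-invariant plane inside $\bar D\cap\tau(\bar D)$ is equivalent to the existence of a type $3$ vertex lattice below $D$, which requires the totally isotropic condition $\bar U=\bar U^{\perp}$ to produce the correct dual indices. The rest amounts to length bookkeeping enabled by the self-duality of $L_{1}$.
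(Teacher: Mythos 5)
Your proof is correct and follows essentially the same approach as the paper: reduce modulo $pL^{\vee}_{1,W_0}$ to obtain a $3$-dimensional subspace of $V(L_1)$, then match $\tau$-invariance of $D$ and of $D\cap\tau(D)$ against the Frobenius conditions cutting out $X_{P_{3,1}}(1)$, $X_B(w_1)$, $X_B(w_2)$. Your formula $\bar D = D/pL_{1,W_0}$ is the right one (and agrees with the paper's construction of the inverse and with its scheme-theoretic map $f_1$); the displayed ``$U=pD^{\vee}/pL^{\vee}_{1,W_0}$'' in the paper's proof appears to be a typo, since that would give a line rather than a hyperplane.
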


\begin{proof}
Given a point $D\in \calM_{L_{1}}(\FF)$  with $D$ described as in Lemma \ref{set-description}. Sending $D$ to $U=pD^{\vee}/pL^{\vee}_{1, W_{0}}$ gives a map from $f_{1}: \calM_{L_{1}}(\FF) \rightarrow Y^{(-)}_{V(L_{1})}(\FF)$. Conversely given $U\subset L_{1, W_{0}}/pL^{\vee}_{1, W_{0}}$, let $D$ be the preimage of $U$ under the natural reduction map. Then $pD^{\vee}$ is the preimage of $U^{\perp}$. Since $U^{\perp}\subset U$, we have $pL^{\vee}_{1, W_{0}}\subset pD^{\vee}\subset D\subset L_{1, W_{0}}$.  Then sending $U\in Y^{(-)}_{V(L_{1})}(\FF)$ to $U^{\perp}\subset U$ gives a map  $q: Y^{(-)}_{V(L_{1})}(\FF)\rightarrow \calM_{L_{1}}(\FF)$ which is clearly the inverse map to the previous one. This shows the bijection between $\calM_{L_{1}}(\FF)$ and $Y^{(-)}_{V(L_{1})}(\FF)$. 

Given $D\in \calM^{\circ}_{L_{1}}(\FF)$, then by definition $D\cap \tau(D)$ is not $\tau$-invariant. Therefore $U\cap \tau(U)$ is not $\tau$-invariant and hence $U\cap\tau(U)\cap\tau^{2}(U)$ is one dimensional. Therefore the image of $D$ under the map constructed previously lands in $X_{B}(w_{2})(\FF)$. Conversely given $U\in X_{B}(w_{1})$ then the its image under $q$ is clearly in $\calM^{\circ}_{L_{1}}(\FF)$.

Given $D\in \calM^{\circ}_{L_{1},\{3\}}(\FF)$, then by definition $D\cap \tau(D)$ is $\tau$-invariant. Therefore $U\cap \tau(U)$ is $\tau$-invariant. Therefore the image of $D$ under the map constructed previously lands in $X_{B}(w_{1})(\FF)$. Conversely given $U\in X_{B}(w_{1})$ then the its image under $q$ is clearly in $\calM^{\circ}_{L_{1},\{3\}}(\FF)$.

Similarly if $D\in \calM_{L_{1},\{0, 2\}}(\FF)$, then $D=L_{02,W_{0}}$ for some vertex lattice $L_{02}$ of type $\{0, 2\}$. Therefore $D$ is $\tau$-invariant and hence $U=D/pL^{\vee}_{1}$ is also $\tau$-invariant and its image is in $X_{P_{\{3,1\}}}(1)$.  Conversely given $U\in X_{P_{\{3,1\}}}(1)(\FF)$, then its image under $q$ is in 
$ \calM_{L_{1},\{0, 2\}}(\FF)$. This finishes the proof of all the assertions.
\end{proof}

The decomposition of $\calM_{L_{3}}$ for a vertex lattice of type $3$ is completely parallel to the previous case. We set 
\begin{equation}\label{EO-L3}
\begin{split}
&\mathcal{M}^{\circ}_{L_{3}}(\FF)=\mathcal{M}_{L_{3}}(\FF) -( \mathcal{M}_{\{1\}}(\FF)\cup \mathcal{M}_{\{0, 2\}}(\FF));\\
&\calM_{L_{3},\{1\}}(\FF)=\calM_{L_{3}}(\FF)\cap \calM_{\{1\}}(\FF);\\
&\calM^{\circ}_{L_{3},\{1\}}(\FF)=(\calM_{L_{3}}(\FF)\cap \calM_{\{1\}}(\FF))- \calM_{\{0, 2\}}(\FF);\\
&\calM_{L_{3},\{0, 2\}}(\FF)=\calM_{L_{3}}(\FF)\cap \calM_{\{0, 2\}}(\FF).\\ 
\end{split}
\end{equation}
We call the decomposition $$\calM_{L_{3}}(\FF)= \mathcal{M}^{\circ}_{L_{3}}(\FF)\sqcup \calM^{\circ}_{L_{3},\{1\}}(\FF)\sqcup \calM_{L_{3},\{0, 2\}}(\FF)$$ the set theoretic \emph{Ekedahl-Oort stratification} for $\calM_{L_{3}}(\FF)$. Let $V(L_{3})$ be the Hermitian space defined by $L^{\vee}_{3,W_{0}}/L_{3, W_{0}}$ with the Hermitian form induced by $\{\cdot, \cdot\}$. 

\begin{proposition}\label{DL3}
There is a bijection between $\mathcal{M}_{L_{3}}(\FF)$ and $Y^{(+)}_{V(L_{3})}(\FF)$. This bijection is compatible with the stratification on both sides in the sense that
\begin{enumerate}
\item  There is a bijection between $\calM^{\circ}_{L_{3}}(\FF)$ and $X_{B}(w_{2})(\FF)$;
\item  There is a bijection between $\calM^{\circ}_{L_{3},\{1\}}(\FF)$ and $X_{B}(w_{1})(\FF)$;
\item  There is a bijection between $\calM_{L_{3}, \{0, 2\}}(\FF)$ and $X_{P_{\{1,3\}}}(1)(\FF)$.
\end{enumerate}
\end{proposition}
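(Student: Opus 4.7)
The plan is to mirror the argument of Proposition \ref{DL1} almost verbatim, with the roles of sums and intersections of lattices interchanged because here $L_{3,W_{0}}$ sits \emph{inside} $D$ rather than outside it. First I would construct the map $f_{3} \colon \calM_{L_{3}}(\FF) \to Y^{(+)}_{V(L_{3})}(\FF)$ by sending $D$ to $U = D/L_{3,W_{0}} \subset L^{\vee}_{3,W_{0}}/L_{3,W_{0}} = V(L_{3})$. Since $L_{3,W_{0}} \subset D \subset D^{\vee} \subset L^{\vee}_{3,W_{0}}$, the quotient $U$ is well defined, and the index computation $pD^{\vee} \subset^{2} D \subset^{2} D^{\vee}$ combined with $L_{3} \subset^{4} L_{3}^{\vee}$ forces $\dim_{\FF} U = 1$. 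The inclusion $D \subset D^{\vee}$ together with the fact that $D^{\vee}/L_{3,W_{0}}$ is exactly the orthogonal complement $U^{\perp}$ in $V(L_{3})$ (with respect to the induced Hermitian form) gives $U \subset U^{\perp}$, so $U \in Y^{(+)}_{V(L_{3})}(\FF)$ by \eqref{DL_1}.

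Next I would build the inverse $q_{3} \colon Y^{(+)}_{V(L_{3})}(\FF) \to \calM_{L_{3}}(\FF)$: given $U$ of dimension $1$ with $U \subset U^{\perp}$, take $D$ to be the preimage of $U$ under the reduction $L^{\vee}_{3,W_{0}} \twoheadrightarrow V(L_{3})$. Then $D^{\vee}$ is the preimage of $U^{\perp}$, and since $\dim U = 1$ while $\dim U^{\perp} = 3$, the chain of indices required in Lemma \ref{set-description} is verified. Checking $f_{3} \circ q_{3} = \mathrm{id}$ and $q_{3} \circ f_{3} = \mathrm{id}$ is immediate from the construction, giving the set-theoretic bijection.

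For the compatibility with stratifications, the key observation is that $D$ is $\tau$-stable if and only if $U = D/L_{3,W_{0}}$ is $\tau$-stable (using that $L_{3}$ is defined over $\ZZ_{p^{2}}$ so $L_{3,W_{0}}$ is itself $\tau$-stable). Hence $D \in \calM_{L_{3},\{0,2\}}(\FF)$ corresponds precisely to the $\tau$-invariant lines in $V(L_{3})$, which is $X_{P_{1,3}}(1)(\FF)$. For the remaining two strata, I would use the dual characterization from the proof of Proposition \ref{chain1}: $D \in \calM^{\circ}_{L_{3},\{1\}}(\FF)$ is characterized by $D + \tau(D)$ being $\tau$-stable but $D$ itself not $\tau$-stable, which translates on $V(L_{3})$ into the condition that $U + \tau(U)$ is a $\tau$-invariant plane while $U \neq \tau(U)$; this is exactly the condition defining $X_{B}(w_{1})(\FF)$ on $Y^{(+)}_{V(L_{3})}$. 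The remaining case, $\calM^{\circ}_{L_{3}}(\FF)$, is then the complement and matches $X_{B}(w_{2})(\FF)$ by Theorem \ref{DL-stratification-inert}.

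The main obstacle I anticipate is bookkeeping rather than conceptual: one must verify that the combinatorics of the three strata of $Y^{(+)}_{V(L_{3})}$ (defined via $U + \tau(U) + \cdots$) truly dualize to the combinatorics used in Proposition \ref{DL1} (defined via $U \cap \tau(U) \cap \cdots$). This follows from applying $\perp$ to the chains in \eqref{4-0type}, but it is worth writing out explicitly that $D$ lies in a type-$1$ vertex lattice $L_{1,W_{0}}$ if and only if $L(D) = D + \tau(D)$ satisfies \eqref{0-4type}, which under $f_{3}$ corresponds precisely to $U + \tau(U)$ being the image of $L(D)$ and hence a $\tau$-stable plane in $V(L_{3})$. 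Once this dictionary is set up, all three correspondences follow immediately by the same arguments as in Proposition \ref{DL1}.
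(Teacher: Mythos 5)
Your proposal is correct and follows exactly the route the paper intends: the paper gives no separate argument for Proposition \ref{DL3}, stating only that it is ``completely parallel'' to Proposition \ref{DL1}, and your write-up is a faithful dualization of that proof (replacing $D \mapsto pD^{\vee}/pL^{\vee}_{1,W_0}$ by $D \mapsto D/L_{3,W_0}$, $\cap$ by $+$, and $Y^{(-)}$ by $Y^{(+)}$). The one point worth making fully explicit, which you touch on but could tighten, is that for $D$ not $\tau$-stable the equivalence ``$D \in \calM_{\{1\}}(\FF)$ $\Longleftrightarrow$ $D+\tau(D)$ is $\tau$-stable'' follows from the index count $D \subset^{1} L_{1,W_0}$ (forcing $D+\tau(D)=L_{1,W_0}$ when $D \subset L_{1,W_0}$) together with the dichotomy in the proof of Proposition \ref{chain1}; once that is recorded the three stratum correspondences are immediate as you say.
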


\begin{remark}
Using the theory of windows of display in \cite{Zink-pro99} in place of {\Dieu}, the bijections proved in Proposition \ref{DL1} and Proposition \ref{DL3} can be extended to any field extension $k$ over $\FF$.
\end{remark}

For a vertex lattice $L_{02}$ of type $\{0, 2\}$, recall that the stratum $\mathcal{M}_{L_{02}}(\FF)$ is the set $$\{D\in \mathcal{M}(\FF): L_{02,W_{0}}\subset D\}.$$
\begin{proposition}\label{L2}
The stratum $\mathcal{M}_{L_{02}}(\FF)$ consists of a superspecial point  and $\calM_{\{0, 2\}}(\FF)$ consists of all the superspecial points.
\end{proposition}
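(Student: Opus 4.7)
The plan is to pin down $\calM_{L_{02}}(\FF)$ as a single lattice via a colength count, to verify superspeciality of that point by checking $FM=VM$ on its Dieudonn\'{e} module, and then to establish the converse for $\calM_{\{0,2\}}(\FF)$ by showing that every superspecial $D$ descends from a vertex lattice of type $\{0,2\}$.

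First I would reduce $\calM_{L_{02}}(\FF)$ to a single element. Given $D \in \calM_{L_{02}}(\FF)$, the containment $L_{02, W_0} \subset D$ dualizes to $D^{\vee} \subset L_{02, W_0}^{\vee}$, and combined with $D \subset^2 D^{\vee}$ from Lemma \ref{set-description} yields the chain $L_{02, W_0} \subset D \subset^2 D^{\vee} \subset L_{02, W_0}^{\vee}$. But the type $\{0,2\}$ condition gives $L_{02, W_0} \subset^2 L_{02, W_0}^{\vee}$, so the total colength across the chain is $2$, already consumed by the inner step. Hence both outer inclusions are equalities and $D = L_{02, W_0}$.

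Next I would check that this lattice is superspecial. Using Lemma \ref{set-description-pre}, the corresponding Dieudonn\'{e} module is $M = M_0 \oplus M_1$ with $M_0 = L_{02, W_0}$ and $FM_1 = pM_0^{\vee} = pL_{02, W_0}^{\vee}$, as computed inside the proof of Lemma \ref{set-description}. Superspeciality amounts to $FM = VM$, and respecting the $\ZZ_{p^2}$-grading this splits into $FM_0 = VM_0$ and $FM_1 = VM_1$. Since $L_{02}$ is $\tau$-stable and $\tau = V^{-1}F$, we have $FL_{02, W_0} = VL_{02, W_0}$, handling the first. For the second, the $\tau$-stability of the dual lattice $L_{02}^{\vee}$ yields $V^2 L_{02, W_0}^{\vee} = FV\cdot L_{02, W_0}^{\vee} = p L_{02, W_0}^{\vee}$, and using $V = p F^{-1}$ on the isocrystal one sees $VM_1 = V F^{-1}(pL_{02, W_0}^{\vee}) = V^2(L_{02, W_0}^{\vee}) = pL_{02, W_0}^{\vee} = FM_1$.

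For the converse direction I would reverse this reasoning. If $D \in \calM(\FF)$ is superspecial, then $FM_0 = VM_0$ rewrites as $\tau(D) = D$, so $D = L_{W_0}$ for some $\ZZ_{p^2}$-lattice $L \subset C_0$. The defining condition $pD^{\vee} \subset^2 D \subset^2 D^{\vee}$ descends to $pL^{\vee} \subset^2 L \subset^2 L^{\vee}$, which is precisely the vertex lattice condition of type $\{0,2\}$. Hence $D \in \calM_{L}(\FF) \subset \calM_{\{0,2\}}(\FF)$, completing the description. The main obstacle is not conceptual but bookkeeping: one must carefully track the operators $F$, $V$, $\tau$ and the pairing $\{\cdot,\cdot\}$ under the $\ZZ_{p^2}$-grading, and in particular verify that $\tau$-stability passes from $L_{02}$ to $L_{02}^{\vee}$ so that the computation $FM_1 = VM_1$ is legitimate.
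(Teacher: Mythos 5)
Your proof is correct and follows essentially the same route as the paper: the index/colength chain $L_{02,W_0} \subset D \subset D^\vee \subset L_{02,W_0}^\vee$ forces $D = L_{02,W_0}$, $\tau$-stability of $L_{02}$ gives $FD = VD$ and hence superspeciality, and the converse runs the identification backward. You merely spell out more explicitly the checks ($\tau$-stability of the dual, $FM_1 = VM_1$, and the converse direction) that the paper states more briefly.
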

\begin{proof}
Given $D\in \mathcal{M}_{L_{02}}(\FF)$, we have the following chain
$$L_{02,W_{0}} \subset D\subset D^{\vee}\subset L^{\vee}_{02,W_{0}}.$$
This forces $L_{02,W_{0}} = D$ by considering the index. Since $\tau(L_{02,W_{0}})=L_{02, W_{0}}$, it follows that $VD=F D$. Hence $M=D\oplus VD^{\vee}$ is a superspecial $\GU(2,2)$ type Dieudonn\'{e} module as described in Lemma \ref{set-description-pre}. It is also clear that all superspecial points in $\calM(\FF)$ arises this way.
\end{proof}

\begin{lemma}
Let $L_{02}$ be a vertex lattice of type $\{0,2\}$. Then we can find a vertex lattice $L_{1}$ of type $1$ containing $L_{02}$ and a vertex lattice $L_{3}$ of type $3$ contained in $L_{02}$. 
\end{lemma}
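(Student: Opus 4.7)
The plan is to reduce the existence of $L_{1}$ and $L_{3}$ to the well-known fact that any non-degenerate Hermitian form on a $2$-dimensional vector space over $\FF_{p^{2}}$ is hyperbolic, hence has an isotropic line.

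First I would construct $L_{1}\supset L_{02}$. Since $L_{02}$ is of type $\{0,2\}$, the quotient $\bar V:=L_{02}^{\vee}/L_{02}$ is a $2$-dimensional $\FF_{p^{2}}$-vector space, and the form $\{\cdot,\cdot\}$ (after identifying $\tfrac{1}{p}\ZZ_{p^{2}}/\ZZ_{p^{2}}\cong \FF_{p^{2}}$) induces a Hermitian form on $\bar V$; non-degeneracy follows from $L_{02}^{\vee\vee}=L_{02}$. A $2$-dimensional non-degenerate Hermitian space over $\FF_{p^{2}}$ is hyperbolic, so choose an isotropic line $\ell\subset \bar V$ and let $L_{1}\subset L_{02}^{\vee}$ be the preimage of $\ell$. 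The chain $L_{02}\subset^{1}L_{1}\subset^{1}L_{02}^{\vee}$ combined with the isotropy of $\ell$ gives $L_{1}\subset L_{1}^{\vee}$, and a length count shows $L_{1}=L_{1}^{\vee}$; equivalently $pL_{1}^{\vee}\subset^{4}L_{1}\subset^{0}L_{1}^{\vee}$, so $L_{1}$ is of type $1$.

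Next I would construct $L_{3}\subset L_{02}$. Any type $3$ lattice contained in $L_{02}$ sits in the chain $pL_{02}^{\vee}\subset^{1} L_{3}\subset^{1} L_{02}$ (a short length computation using $[L_{3}^{\vee}:L_{3}]=4$ and $[L_{02}^{\vee}:L_{02}]=2$). Thus I look inside the $2$-dimensional Hermitian $\FF_{p^{2}}$-space $\bar W:=L_{02}/pL_{02}^{\vee}$, with Hermitian form induced by $\{\cdot,\cdot\}$ (taking values in $\ZZ_{p^{2}}/p\ZZ_{p^{2}}\cong \FF_{p^{2}}$; non-degeneracy is checked as before). Pick an isotropic line $\ell'\subset \bar W$, let $L_{3}$ be its preimage, and check that $L_{3}=pL_{3}^{\vee}$: a lattice $L_{3}\supset pL_{02}^{\vee}$ of the form $pL_{02}^{\vee}+\ZZ_{p^{2}}v$ with $v\in L_{02}$ satisfies $p^{-1}v\in L_{3}^{\vee}$ precisely when $\{v,v\}\in p\ZZ_{p^{2}}$, i.e.\ when the image of $v$ in $\bar W$ is isotropic. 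So $L_{3}$ is of type $3$.

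The only substantive input is the hyperbolicity of non-degenerate $2$-dimensional Hermitian forms over the finite field $\FF_{p^{2}}$; all remaining verifications are length/index bookkeeping together with the duality $(\bar\ell)^{\perp}=\bar\ell$ for a maximal isotropic line. Thus there is no real obstacle, only the two dual constructions to write out.
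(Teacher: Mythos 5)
Your proof is correct and follows the same basic route as the paper: pass to the $2$-dimensional $\FF_{p^{2}}$-spaces $L_{02}^{\vee}/L_{02}$ and $L_{02}/pL_{02}^{\vee}$, choose a line, and take its preimage. However, you are actually \emph{more} careful than the paper at the crucial step. The paper says ``choose any line $\bar{l}$ in $L_{02}^{\vee}/L_{02}$,'' but as you correctly observe, an arbitrary line will not do: if $L_{1}=L_{02}+\ZZ_{p^{2}}v$ with $v\in L_{02}^{\vee}$, then $L_{1}\subset L_{1}^{\vee}$ forces $\{v,v\}\in\ZZ_{p^{2}}$, i.e.\ the image of $v$ in $L_{02}^{\vee}/L_{02}$ must be isotropic for the induced nondegenerate Hermitian form (which takes values in $p^{-1}\ZZ_{p^{2}}/\ZZ_{p^{2}}\cong\FF_{p^{2}}$). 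For a nondegenerate Hermitian plane over $\FF_{p^{2}}$ there are exactly $p+1$ isotropic lines out of $p^{2}+1$, so a generic choice genuinely fails. Your invocation of hyperbolicity (equivalently, surjectivity of the norm $\FF_{p^{2}}^{\times}\to\FF_{p}^{\times}$) to guarantee the existence of an isotropic line is precisely the missing ingredient, and the dual argument for $L_{3}$ inside $L_{02}/pL_{02}^{\vee}$ is handled the same way. So the proposal is not only correct but tightens a small inaccuracy in the paper's own proof.
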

\begin{proof}
Since $L_{02}$ is a vertex lattice of type $\{0, 2\}$, $pL_{02}^{\vee}\subset^{2}L_{02}\subset^{2} L_{02}^{\vee}$. Consider the space $L_{02}^{\vee}/L_{02}$ of dimension two. Choose any line $\bar{l}$ in $L^{\vee}_{02}/L_{02}$ and denote by $L_{1}$ its preimage in $L^{\vee}_{02}$ under the natural reduction map. One verifies immediately that $L_{1}$ is a vertex lattice of type $1$. Similarly choose any line in $L_{02}/pL^{\vee}_{02}$ gives a vertex $L_{3}$ lattice of type $3$. 
\end{proof}

This lemma shows that any superspecial point is contained in the intersection $\calM_{L_{1}}(\FF)\cap \calM_{L_{3}}(\FF)$ for a vertex  lattice $L_{1}$ of type $1$ and a vertex lattice $L_{3}$ of type $3$.  

Finally we study the intersection of any two $2$-dimensional strata. We have already seen that for a $1$-vertex $L_{1}$ and a $3$-vertex $L_{3}$, if the intersection $\calM_{L_{1}}(\FF)\cap \calM_{L_{3}}(\FF)$ is non-empty then it equal to  $\PP^{1}(\FF)$. Let $L_{1}$ and $L^{\prime}_{1}$ be two distinct vertex lattices of type $1$.  Similarly let $L_{3}$ and $L^{\prime}_{3}$ be two distinct vertex lattices of type $3$. 

\begin{lemma}
Suppose the intersection of $\calM_{L_{1}}(\FF)$ and $\calM_{L^{\prime}_{1}}(\FF)$ is nonempty, then it consists of a superspecial point corresponding to the vertex lattice $L_{1}\cap L^{\prime}_{1}$ of type $\{0, 2\}$. Similarly if the intersection of $\calM_{L_{3}}(\FF)$ and $\calM_{L^{\prime}_{3}}(\FF)$ is nonempty, then it consists of a superspecial point corresponding to the vertex lattice $L_{3}+ L^{\prime}_{3}$ of type $\{0, 2\}$.
\end{lemma}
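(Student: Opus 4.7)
The plan is to show that any $D$ in the intersection forces $L := L_1 \cap L_1'$ to be a vertex lattice of type $\{0,2\}$ and pins $D$ down to $L_{W_0}$; the resulting superspecial point is then automatically in the intersection and by Proposition \ref{L2} coincides with the unique point of $\calM_{L_{02}}(\FF)$ for $L_{02}=L$.

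First I would observe that $D\in \calM_{L_1}(\FF)\cap \calM_{L_1'}(\FF)$ gives $D\subset L_{1,W_0}\cap L_{1,W_0}'=L_{W_0}$; since $L_1$ and $L_1'$ are self-dual, their sum equals $L^{\vee}$, and dualizing the two inclusions yields $L^{\vee}_{W_0}\subset D^{\vee}$. Multiplying by $p$ and using the Dieudonn\'e condition $pD^{\vee}\subset D$ produces the sandwich $pL^{\vee}_{W_0}\subset D\subset L_{W_0}$, which already proves that $L$ is a vertex lattice.

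Next I would rule out the remaining types for $L$. If $L$ were of type $1$ (i.e.\ $L=L^{\vee}$), then the chain $L\subset L_1\subset L^{\vee}=L$ collapses, forcing $L_1=L=L_1'$ and contradicting $L_1\neq L_1'$. If $L$ were of type $3$ (so $L=pL^{\vee}$), the sandwich would collapse to $D=L_{W_0}$, but then $[L^{\vee}_{W_0}:L_{W_0}]=4\neq 2$, so $L_{W_0}$ is not a Dieudonn\'e module. Hence $L$ is of type $\{0,2\}$, giving $[L_1:L]=1$. Combined with the universal identity $[L_{1,W_0}:D]=1$, obtained from the chain $D\subset L_{1,W_0}\subset D^{\vee}$ of total colength $[D^{\vee}:D]=2$ together with the duality equality $[L_{1,W_0}:D]=[D^{\vee}:L_{1,W_0}]$, one reads off $[L_{W_0}:D]=0$, so $D=L_{W_0}$.

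The second assertion is proved by the dual argument: from $D\in \calM_{L_3}(\FF)\cap \calM_{L_3'}(\FF)$ one obtains $L''_{W_0}\subset D$ for $L'':=L_3+L_3'$, and the identity $L_3^{\vee}=p^{-1}L_3$ rewrites $(L'')^{\vee}=L_3^{\vee}\cap L_3'^{\vee}$ as $p^{-1}(L_3\cap L_3')$, showing $L''$ is a vertex lattice. Its type cannot be $1$ (that would force $D=D^{\vee}$, violating $[D^{\vee}:D]=2$) nor $3$ (that would force $L_3=L_3'$), so $L''$ is of type $\{0,2\}$, and the length-$2$ sandwich $L''_{W_0}\subset D\subset D^{\vee}\subset (L'')^{\vee}_{W_0}$ combined with $[D^{\vee}:D]=2$ immediately pins down $D=L''_{W_0}$. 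The only real obstacle is careful bookkeeping: one must track colengths under the involutions $M\mapsto M^{\vee}$ and $M\mapsto pM$, repeatedly invoking the identity $[M_2^{\vee}:M_1^{\vee}]=[M_1:M_2]$ for $M_2\subset M_1$ to keep indices straight.
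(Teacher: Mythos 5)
Your proposal is correct and follows essentially the same path as the paper's proof: trap $D$ between $L_1\cap L_1'$ (resp.\ $L_3+L_3'$) and its dual, use the self-duality (resp.\ the relation $L_3^{\vee}=p^{-1}L_3$) to identify that dual with the sum (resp.\ intersection), and then count colengths against $[D^{\vee}:D]=2$ to force $D$ to equal the vertex lattice and to pin down its type as $\{0,2\}$. The only stylistic difference is that the paper does the index bookkeeping in one line (and leaves the $L_3$ case to the reader), whereas you spell out explicitly why $L$ cannot be of type $1$ or type $3$ and why $[L_{1,W_0}:D]=1$; one small point worth tightening is that when you claim ``showing $L''$ is a vertex lattice'' for $L''=L_3+L_3'$, the containment $L''\subset(L'')^{\vee}$ is not a purely formal consequence of the identity $(L'')^{\vee}=p^{-1}(L_3\cap L_3')$ but requires the sandwich $L''_{W_0}\subset D\subset D^{\vee}\subset(L'')^{\vee}_{W_0}$, which you only invoke in the next sentence.
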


\begin{proof}
Suppose $M\in \calM_{L_{1}}\cap \calM_{L^{\prime}_{1}}$. Then $pL_{1,W_{0}}\subset^{1} pM^{\vee}\subset^{2} M\subset^{1} L_{1,W_{0}}$ and $pL^{\prime}_{1, W_{0}}\subset^{1} pM^{\vee}\subset^{2} M\subset^{1} L^{\prime}_{1, W_{0}}$, this forces $M=L_{1, W_{0}}\cap L^{\prime}_{1, W_{0}}$. Moreover $L_{1, W_{0}}\cap L^{\prime}_{1, W_{0}}$ is a vertex lattice of type $\{0,2\}$ since it fits in the chain $$L^{\vee}_{1, W_{0}}+L^{\prime \vee}_{1, W_{0}}\subset pM^{\vee}\subset M\subset L_{1, W_{0}}\cap L^{\prime}_{1, W_{0}}.$$
The statement for $L_{3}, L^{\prime}_{3}$ is proved in the exact same way.
\end{proof}

\subsection{The isogeny trick} We have studied the set theoretic structure of the lattice stratum $\calM_{L}(\FF)$ in the previous sections. The goal in this subsection is to equip $\calM_{L}(\FF)$ with a scheme theoretic structure and translates the stratification of $\calM_{L}(\FF)$ as in Proposition \ref{DL1} and Proposition $\ref{DL3}$ to a scheme theoretic level. We follow the method of \cite[Section 3]{VW-invent11} which we call the isogeny trick. 

Let $L_{1}$ be a vertex lattice of type $1$.  We define $L^{+}_{1}=L_{1}\oplus V^{-1}L_{1}$ and $L^{-}_{1}=(L^{+}_{1})^{\perp}$ where $\perp$ is the dual taken with respect to the form $(\cdot,\cdot)$ inside of $N$. By an easy computation, we have $L^{\perp}_{1}=VL^{\vee}_{1}$ and $(V^{-1}L_{1})^{\perp}=pL^{\vee}_{1}$. Therefore $L^{-}_{1}=pL^{\vee}_{1}\oplus VL^{\vee}_{1}$.

\begin{lemma}\label{p-div-L1}
The lattices $L_{1}^{+}$ and $L_{1}^{-}$ gives rise to $p$-divisible groups $\XX_{L_{1}^{+}}$ and $\XX_{L_{1}^{-}}$ with quasi-isogenies $\rho_{L^{+}_{1}}$ and $\rho_{L^{-}_{1}}$ to $\XX$. Moreover there is a commutative diagram:
$$
\begin{tikzcd}
 \XX_{L_{1}^{+}} \arrow[r,  "\sim"] \arrow[d,  "\rho_{L_{1}^{+}}"]
    & \XX^{\vee}_{L_{1}^{-}} \\
   \XX \arrow[r,  "\lambda_{\XX}"] & \XX^{\vee}  \arrow[u,  "\rho^{\vee}_{L_{1}^{-}}"] .
\end{tikzcd}$$ 
\end{lemma}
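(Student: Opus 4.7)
The plan is to build the two Dieudonné submodules $L_1^\pm$ of $N$ by direct verification, translate them into $p$-divisible groups via Dieudonné theory, and then read off the diagram from the observation that $L_1^+$ and $L_1^-$ are perpendicular with respect to the form $(\cdot,\cdot)$ attached to $\lambda_\XX$.

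First I would check that $L_1^+ = L_1 \oplus V^{-1}L_1$ is a $W_0$-lattice in $N$ stable under both $F$ and $V$, so that it is a Dieudonné submodule. The key input is that $L_1$ is a vertex lattice, hence its $W_0$-span is $\tau$-stable with $\tau = V^{-1}F$, giving the identity $FL_1 = VL_1$ inside $N_1$. From this, the relations $FL_1 \subset V^{-1}L_1$ (equivalent to $pL_1\subset L_1$), $F(V^{-1}L_1) = V^{-1}FL_1 = L_1$, $V(L_1)= FL_1 \subset V^{-1}L_1$, and $V(V^{-1}L_1) = L_1$ all follow by elementary manipulation on the isocrystal. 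An analogous computation (using $FL_1^\vee = VL_1^\vee$, which follows from $\tau$-stability of $L_1^\vee$) shows that $L_1^- = pL_1^\vee \oplus VL_1^\vee$ is also a Dieudonné submodule of $N$.

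Next I would apply (covariant) Dieudonné theory over $\FF$ to produce the $p$-divisible groups $\XX_{L_1^\pm}$ corresponding to the lattices $L_1^\pm \subset N$. Since both lattices live in the same rational isocrystal $N$ as the Dieudonné module of $\XX$, the inclusions furnish the required quasi-isogenies $\rho_{L_1^\pm}:\XX_{L_1^\pm}\to \XX$, whose heights are computed from $\mathrm{length}_{W_0}(L_1^\pm / (L_1^\pm \cap M_\XX))- \mathrm{length}_{W_0}(M_\XX/(L_1^\pm\cap M_\XX))$.

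For the commutative diagram, I would use the fact that $\lambda_\XX$ corresponds under Dieudonné theory to the alternating form $(\cdot,\cdot)$ on $N$, which furnishes an identification of $N^\vee$ (the isocrystal of $\XX^\vee$) with $N$. Under this identification, the Dieudonné module $(L_1^-)^{\vee_{\mathrm{D}}}$ of the dual $p$-divisible group $\XX_{L_1^-}^\vee$ corresponds to the perpendicular lattice $(L_1^-)^\perp \subset N$. By the definition $L_1^- = (L_1^+)^\perp$ and the nondegeneracy of $(\cdot,\cdot)$, we have $(L_1^-)^\perp = L_1^+$; hence the Dieudonné module of $\XX_{L_1^-}^\vee$ coincides with $L_1^+$, yielding the top horizontal isomorphism $\XX_{L_1^+}\xrightarrow{\sim} \XX_{L_1^-}^\vee$. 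The commutativity of the square is then automatic: chasing both compositions through the Dieudonné functor, each is the inclusion $L_1^+ \hookrightarrow N \cong N^\vee$, viewed as a quasi-isogeny of $p$-divisible groups.

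The main (and essentially the only) obstacle is tracking sign and duality conventions: one needs to verify that the perpendicular lattice $(L_1^-)^\perp$ taken with respect to the polarization form $(\cdot,\cdot)$ on $N$ really does produce the Dieudonné module of $\XX_{L_1^-}^\vee$ up to the identifications made, and in particular that $L_1^-$ is computed correctly as $pL_1^\vee \oplus VL_1^\vee$ via the relations $L_1^\perp = VL_1^\vee$ and $(V^{-1}L_1)^\perp = pL_1^\vee$; once these are in place, everything is formal.
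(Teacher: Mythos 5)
Your proof is correct and follows essentially the same route as the paper's: you verify directly on Dieudonné modules that $L_1^\pm$ are $F,V$-stable $W_0$-lattices in $N$ satisfying $pM\subset VM\subset M$ (the paper does this via $V^2 = p$ on $L_{1,W_0}$, which is the same $\tau$-stability observation you use), read off the quasi-isogenies from the lattice inclusions, and identify $\XX_{L_1^+}\cong \XX_{L_1^-}^\vee$ via $(L_1^-)^\perp = L_1^+$. The only difference is that the paper dismisses the commutativity with ``follows from construction'' while you spell out the duality identification; that added detail is correct and is a reasonable expansion of the terse original.
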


\begin{proof}
To see $L_{1, W_{0}}^{+}$ defines a $p$-divisible groups, we need to verify that $p L_{1, W_{0}}^{+} \subset V L_{1, W_{0}}^{+} \subset L_{1, W_{0}}^{+}$. This is equivalent to verify that $pV^{-1}L_{1, W_{0}}\subset V L_{1, W_{0}}\subset V^{-1} L_{1, W_{0}}$ and $pL_{1, W_{0}}\subset V V^{-1} L_{1, W_{0}}\subset  L_{1, W_{0}}$. Notice that $V^{2}=p$ on $L_{1, W_{0}}$ and then the two equalities are clear, therefore $L^{+}_{1, W_{0}}$ indeed gives rise to a Dieudonn\'{e} module.  The quasi-isogenies are given by the inclusion $L_{1, W_{0}}\subset N_{0}$ and the commutativity of the diagram follows from construction. \end{proof}

For any $\FF$-algebra $R$, let  $(X, \iota_{X},  \lambda_{X}, \rho_{X})$ be an $R$-point of $\calM_{L_{1}}$. Consider the quasi-isogenies defined by 
\begin{equation}\label{rho0+}
\rho_{X,L_{1}^{+}} :X \xrightarrow{\rho_{X}} \XX_{R} \xrightarrow{\rho^{-1}_{L_{1}^{+}}} \XX_{L_{1}^{+}, R} 
 \end{equation}
and
\begin{equation}\label{rho0-}
\rho_{X,L_{1}^{-}} :\XX_{L_{1}^{-}, R} \xrightarrow{\rho_{L_{1}^{-}}} \XX_{R} \xrightarrow{\rho_{X}^{-1}} X.
 \end{equation}

We define the lattice stratum $\calM_{L_{1}}$ associated to $L_{1}$ as the subfunctor of $\calM$ consisting of those $(X, \iota_{X},  \lambda_{X}, \rho_{X})$ over $R$ such that $\rho_{X,L_{1}^{+}}$ is an isogeny. This is also equivalent to $\rho_{X,L_{1}^{-}}$ being an isogeny.

\begin{proposition}\label{points-of-L1}
The set $\calM_{L_{1}}(\FF)$ as in Definition \ref{lattice-stratum-set} is precisely set of $\FF$-points of $\calM_{L_{1}}$.
\end{proposition}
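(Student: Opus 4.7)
\emph{Plan.} The strategy is to translate the isogeny condition on $\rho_{X,L_1^+}$ into an inclusion of Dieudonn\'{e} lattices inside the isocrystal $N$, and then to reduce this lattice inclusion to the simpler condition on $D = M_0$ of Definition \ref{lattice-stratum-set} by exploiting the polarization and the Dieudonn\'{e} module axioms.

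First, I will invoke covariant Dieudonn\'{e} theory on $\FF$: a quasi-isogeny between $p$-divisible groups is a genuine isogeny precisely when the induced quasi-inclusion of Dieudonn\'{e} modules is a true inclusion of $W_0$-lattices in the ambient isocrystal. Applied to $\rho_{X,L_1^+} : X \to \XX_{L_1^+}$, this rewrites the defining condition of the functor $\calM_{L_1}$ as the lattice inclusion $M \subset L_{1,W_0}^+$, where $M$ is the Dieudonn\'{e} module of $X$ inside $N$. Writing $M = M_0 \oplus M_1$ for the $\QQ_{p^2}$-grading $N = N_0 \oplus N_1$, and noting that by construction $L_{1,W_0} \subset N_0$ while $V^{-1} L_{1,W_0} \subset N_1$, the inclusion splits into the two component conditions $M_0 \subset L_{1,W_0}$ and $M_1 \subset V^{-1} L_{1,W_0}$.

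The key observation is that the second of these is forced by the first. Indeed, the proof of Lemma \ref{set-description} shows $FM_1 = pD^\vee$, and combining this with $V = pF^{-1}$ on the isocrystal gives $M_1 = VD^\vee$. Consequently, $M_1 \subset V^{-1} L_{1,W_0}$ is equivalent to $VM_1 = V^2 D^\vee \subset L_{1,W_0}$. However, the Dieudonn\'{e} axiom $VM \subset M$ restricted to the component $V : N_1 \to N_0$ gives $VM_1 \subset M_0 = D$, so that the hypothesis $D \subset L_{1,W_0}$ from Definition \ref{lattice-stratum-set} immediately yields $VM_1 \subset L_{1,W_0}$, as required. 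Conversely, given an $\FF$-point of the functor $\calM_{L_1}$, the inclusion $M \subset L_{1,W_0}^+$ projects onto $M_0 \subset L_{1,W_0}$, so the two descriptions agree. The equivalence with the condition that $\rho_{X,L_1^-}$ be an isogeny is obtained at once by dualizing through the commutative square of Lemma \ref{p-div-L1}, so no separate verification is needed.

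I do not foresee a substantial obstacle; the only delicate point is to interpret $V^{-1} L_{1,W_0}$ consistently as the preimage of $L_{1,W_0}$ under $V : N_1 \to N_0$ inside the isocrystal, which is exactly what legitimizes the passage $M_1 \subset V^{-1} L_{1,W_0} \iff VM_1 \subset L_{1,W_0}$ used above.
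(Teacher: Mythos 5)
Your proof is correct and takes the same overall approach as the paper: translate the isogeny condition into the lattice inclusion $M \subset L^{+}_{1,W_{0}}$, split along the $\QQ_{p^{2}}$-grading into $M_{0}\subset L_{1,W_{0}}$ and $M_{1}\subset V^{-1}L_{1,W_{0}}$, and observe that the second inclusion is forced by the first. Your argument for that implication is in fact slightly more elementary than the paper's: the paper writes the chain $M_{1}=VM_{0}^{\vee}\subset F^{-1}M_{0}\subset F^{-1}L_{1,W_{0}}=V^{-1}L_{1,W_{0}}$, which invokes the condition $pM_{0}^{\vee}\subset M_{0}$ and the $\tau$-stability of $L_{1}$ (to identify $F^{-1}L_{1,W_{0}}$ with $V^{-1}L_{1,W_{0}}$), whereas you deduce $VM_{1}\subset M_{0}\subset L_{1,W_{0}}$ directly from the Dieudonn\'e axiom $VM\subset M$ and the hypothesis, then apply $V^{-1}$. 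Note that your intermediate computation $M_{1}=VD^{\vee}$ and $VM_{1}=V^{2}D^{\vee}$, while correct, is not actually needed once you have the Dieudonn\'e axiom in hand.
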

\begin{proof}
Giving a $\FF$-point of $\calM_{L_{1}}$ is equivalent to giving a Dieudonn\'{e} module $M\subset L^{+}_{1, W_{0}}$ but this is equivalent to $M_{0}\subset L_{1, W_{0}}$. Indeed, if $M\subset L^{+}_{1, W_{0}}$, then $M_{0}\subset L_{1, W_{0}}$ is obvious. Conversely if $M_{0}\subset L_{1, W_{0}}$, then $M_{1}= V M^{\vee}_{0}\subset F^{-1}M_{0}\subset F^{-1}L_{1, W_{0}}=V^{-1}L_{1, W_{0}}$.
\end{proof}

Let $L_{3}$ be a vertex lattice of type $3$.  We define the $L^{+}_{3}=L_{3}\oplus V L_{3}$ and $L^{-}_{3}=(L^{+}_{3})^{\perp}$ where $\perp$ is the dual taken with respect to the form $(\cdot,\cdot)$ inside of $N_{0}$. By an easy computation, we have $L^{\perp}_{3}=V(L^{\vee}_{3})$ and $(V L_{1})^{\perp}=L^{\vee}_{3}$. Therefore $L^{-}_{3}=L^{\vee}_{3}\oplus V (L^{\vee}_{3})$.  Using the same argument in Proposition \ref{p-div-L1}, we can prove the following result.

\begin{lemma}\label{p-div-L3}
The lattices $L_{3}^{+}$ and $L_{3}^{-}$ gives rise to a $p$-divisible groups $\XX_{L_{3}^{+}}$ and $\XX_{L_{3}^{-}}$ with quasi-isogenies $\rho_{L^{+}_{3}}$ and $\rho_{L^{-}_{3}}$ to $\XX$. Moreover there is a commutative diagram:
$$
\begin{tikzcd}
 \XX_{L_{3}^{+}} \arrow[r,  "\sim"] \arrow[d,  "\rho_{L_{3}^{+}}"]
    & \XX^{\vee}_{L_{3}^{-}} \\
   \XX \arrow[r,  "\lambda_{\XX}"] & \XX^{\vee}  \arrow[u,  "\rho^{\vee}_{L_{3}^{-}}"] .
\end{tikzcd}$$ 
\end{lemma}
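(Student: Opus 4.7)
My plan is to follow the proof of Lemma \ref{p-div-L1} verbatim, with $L_3$ and $V L_3$ in place of $L_1$ and $V^{-1} L_1$. The two things to verify are: (i) $L_3^+ = L_3 \oplus V L_3$ and $L_3^- = (L_3^+)^\perp$ are Dieudonn\'e modules in $N$, giving rise to $p$-divisible groups $\XX_{L_3^\pm}$ equipped with quasi-isogenies $\rho_{L_3^\pm}$ to $\XX$; and (ii) the square in the statement commutes.

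For (i), since $L_3 \subset C_0 = N_0^{\tau = 1}$, one has $\tau L_{3, W_0} = L_{3, W_0}$, so the relation $\tau = V^{-1} F$ forces $F L_{3, W_0} = V L_{3, W_0}$; applying $V$ once more and using $VF = p$, one obtains $V^2 L_{3, W_0} = p L_{3, W_0}$. Hence
\[
V L_{3, W_0}^+ \;=\; V L_{3, W_0} \oplus V^2 L_{3, W_0} \;=\; V L_{3, W_0} \oplus p L_{3, W_0},
\]
which is sandwiched between $p L_{3, W_0}^+$ and $L_{3, W_0}^+$, as required for a Dieudonn\'e module. Since $L_3^\vee \subset C_0$ is likewise fixed by $\tau$, the identical computation shows that $L_3^- = L_3^\vee \oplus V(L_3^\vee)$ is also a Dieudonn\'e module. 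The quasi-isogenies $\rho_{L_3^\pm} \colon \XX_{L_3^\pm} \to \XX$ are then induced by the lattice inclusions $L_{3, W_0}^\pm \hookrightarrow N$.

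For (ii), I would first verify the two identities $L_3^\perp = V(L_3^\vee)$ inside $N_1$ and $(V L_3)^\perp = L_3^\vee$ inside $N_0$. Using $(x, F y) = \{x, y\}$ together with the fact that $N_0$ and $N_1$ are totally isotropic for $(\cdot, \cdot)$, one computes $L_3^\perp \cap N_1 = F L_{3, W_0}^\vee = V L_{3, W_0}^\vee$ (the last equality again using $\tau L_3^\vee = L_3^\vee$) and $(V L_3)^\perp \cap N_0 = L_{3, W_0}^\vee$. Together these give $L_3^- = (L_3^+)^\perp$, so contravariant Dieudonn\'e theory identifies $\XX_{L_3^+}$ with $\XX_{L_3^-}^\vee$ compatibly with the polarization $\lambda_\XX$ and the quasi-isogenies $\rho_{L_3^+}$, $\rho_{L_3^-}^\vee$; this compatibility is precisely the commutativity of the square. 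The only point requiring a moment of care is the perp computation, with its flipping between $F$ and $V$ via $\tau$-invariance; beyond that, the proof is a mechanical transposition of the type $1$ case.
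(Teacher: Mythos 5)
Your proof is correct and takes exactly the approach the paper intends: the paper explicitly defers the argument to the $L_1$ case ("Using the same argument in Proposition \ref{p-div-L1}"), and you carry out that transposition, using $V^2 L_{3,W_0} = p L_{3,W_0}$ (from $\tau$-invariance) to verify $pL_{3,W_0}^+ \subset V L_{3,W_0}^+ \subset L_{3,W_0}^+$, and also spelling out the perp computations $L_3^\perp \cap N_1 = V L_{3,W_0}^\vee$ and $(V L_3)^\perp \cap N_0 = L_{3,W_0}^\vee$ that the paper calls "an easy computation" just before the lemma. No gaps.
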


For a $\FF$-algebra $R$, let  $(X, \iota_{X},  \lambda_{X}, \rho_{X})$ be a $R$-point of $\calM_{L_{1}}$. Consider the quasi-isogenies defined by 

\begin{equation}\label{rho3+}
\rho_{X,L_{3}^{+}} :\XX_{L_{3}^{+}, R} \xrightarrow{\rho_{L_{3}^{+}}} \XX_{R} \xrightarrow{\rho_{X}^{-1}} X;
 \end{equation} 
and 
\begin{equation}\label{rho3-}
\rho_{X,L_{3}^{-}} :X \xrightarrow{\rho_{X}} \XX_{R} \xrightarrow{\rho^{-1}_{L_{3}^{-}}} \XX_{L_{3}^{-}, R} 
 \end{equation}
 
We define the lattice stratum $\calM_{L_{3}}$ associated to $L_{3}$ as the subfunctor of $\calM$ consisting of those $(X, \iota_{X},  \lambda_{X}, \rho_{X})$ over $R$ such that $\rho_{X,L_{3}^{+}}$ is an isogeny. This is also equivalent to $\rho_{X,L_{3}^{-}}$ is an isogeny. 


\subsection{The isomorphism between $\calM_{L}$ and $Y^{(\pm)}_{V(L)}$} Let $L_{1}$ be a vertex lattice of type $1$ and $L_{3}$ be a vertex lattice of type $3$. The goal of this subsection is to define maps $f_{1}: \calM_{L_{1}}\rightarrow Y^{(-)}_{V(L_{1})}$ and $f_{3}: \calM_{L_{3}}\rightarrow Y^{(+)}_{V(L_{3})}$  and show that they are in fact isomorphisms. 

We begin with $L_{1}$. Given a point $(X, \iota_{X},  \lambda_{X}, \rho_{X})\in \calM_{L_{1}}(R)$ for a $\FF$-algebra $R$. Consider the composite isogeny $$\rho_{L_{1}}:\XX_{L_{1}^{-}, R} \xrightarrow{\rho_{X,L_{1}^{-}}} X  \xrightarrow{\rho_{X,L_{1}^{+}}}\XX_{L_{1}^{+}, R}.$$
The kernel of $\rho_{L_{1}}$ is the quotient $L^{+}_{1, R}/L^{-}_{1, R}$ which can be identified with $L_{1,R}/pL^{\vee}_{1,R}\oplus V^{-1}L_{1,R}/V L^{\vee}_{1,R}$. The kernel of $\rho_{X,L^{-}_{1}}$ is a direct summand of $L_{1, R}/pL^{\vee}_{1, R}\oplus V^{-1}L_{1, R}/V L^{\vee}_{1, R}$ which we compute to be $M_{0}/pL^{\vee}_{1, R}\oplus V( M^{\vee}_{0})/V L^{\vee}_{1, R}$ where $M=M_{0}\oplus M_{1}$ is the covariant Dieudonn\'{e} crystal $\DD(X)(R)$ of $X$ evaluated at $R$. Sending $(X, \iota_{X},  \lambda_{X}, \rho_{X})\in \calM_{L_{1}}(R)$ to $M_{0}/ pL^{\vee}_{1, R}$ gives a map from $\calM_{L_{1}}$ to $Y^{(-)}_{V(L_{1})}$ with $V(L_{1})=L_{1, W_{0}}/pL^{\vee}_{1, W_{0}}$. We denote this map by $f_{1}$.

Now we consider $L_{3}$. Given a point $(X, \iota_{X},  \lambda_{X}, \rho_{X})\in \calM_{L_{3}}(R)$ for an $\FF$-algebra $R$. Consider the isogenies $$\rho_{L_{3}}:\XX_{L_{3}^{+}, R} \xrightarrow{\rho_{X,L_{3}^{+}}} X  \xrightarrow{\rho_{X,L_{3}^{-}}}\XX_{L_{3}^{-}, R}.$$
The kernel of $\rho_{L_{3}}$ is the quotient $L^{-}_{3, R}/L^{+}_{3, R}$ which we compute to be $L^{\vee}_{3,R}/L_{3,R}\oplus V L^{\vee}_{3,R}/V L_{3,R}$. The kernel of $\rho_{X,L^{+}_{3}}$ is a direct summand of $L^{\vee}_{3,R}/L_{3,R}\oplus V L^{\vee}_{3,R}/V L_{3,R}$ which we compute to be $M_{0}/L_{3, R}\oplus V M^{\vee}_{0}/V L_{3, R}$ where $M=M_{0}\oplus M_{1}$ is the covariant Dieudonn\'{e} crystal $\DD(X)(R)$ of $X$. Sending $(X, \iota_{X},  \lambda_{X}, \rho_{X})\in \calM_{L_{3}}(R)$ to $M_{0}/ L_{3, R}$ gives a map from $\calM_{L_{3}}$ to $Y^{(+)}_{V(L_{3})}$ with $V(L_{3})=L^{\vee}_{3, W_{0}}/L_{3, W_{0}}$. We denote this map by $f_{3}$.

\begin{proposition}\label{projectivity-L1}
The functors $\calM_{L_{1}}$ and $\calM_{L_{3}}$ are representable by closed subschemes of $\calM$. Moreover they are  projective schemes.
\end{proposition}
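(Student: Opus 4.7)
The plan is to follow the same isogeny trick argument used in the $p$-split case (Lemma \ref{projective-split}), adapted to the PEL setting. For concreteness let me describe the argument for $\calM_{L_{1}}$; the case of $\calM_{L_{3}}$ is completely parallel using Lemma \ref{p-div-L3} and the isogenies \eqref{rho3+}, \eqref{rho3-} in place of \eqref{rho0+}, \eqref{rho0-}.

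First I would show that $\calM_{L_{1}}$ is a closed subfunctor of $\calM$. By Lemma \ref{p-div-L1} the lattice $L_{1}^{+}$ defines a $p$-divisible group $\XX_{L_{1}^{+}}$ with a quasi-isogeny $\rho_{L_{1}^{+}} : \XX_{L_{1}^{+}} \to \XX$, and $\calM_{L_{1}}$ is the subfunctor of $(X,\iota_{X},\lambda_{X},\rho_{X})\in\calM$ such that the quasi-isogeny $\rho_{X,L_{1}^{+}}$ of \eqref{rho0+} is a genuine isogeny. By \cite[Proposition 2.9]{RZ-Aoms}, the locus where a quasi-isogeny of $p$-divisible groups over a base is an actual isogeny is representable by a closed formal subscheme. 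Since $\calM$ is a reduced scheme, pulling back this closed condition along $(X,\rho_{X})\mapsto (\rho_{X,L_{1}^{+}})$ gives a closed subscheme of $\calM$ whose $\FF$-points agree with $\calM_{L_{1}}(\FF)$ as defined in Definition \ref{lattice-stratum-set}; see also Proposition \ref{points-of-L1}.

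Next I would establish projectivity via boundedness. The commutative diagram in Lemma \ref{p-div-L1} together with the dual isogeny $\rho_{X,L_{1}^{-}}$ of \eqref{rho0-} produces, on any $R$-point of $\calM_{L_{1}}$, a factorization
\[
\XX_{L_{1}^{-},R} \xrightarrow{\rho_{X,L_{1}^{-}}} X \xrightarrow{\rho_{X,L_{1}^{+}}} \XX_{L_{1}^{+},R}
\]
by isogenies whose composite has constant kernel $L_{1,R}^{+}/L_{1,R}^{-}$ of bounded order (a fixed finite group scheme). Hence $X$ is sandwiched between two fixed $p$-divisible groups by isogenies of bounded height, which is precisely the boundedness condition of \cite[2.30]{RZ-Aoms}. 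By \cite[Corollary 2.29]{RZ-Aoms}, a bounded closed subscheme of $\calN_{\mathrm{red}}$ is projective over $\Spec \FF$, and this gives the desired projectivity of $\calM_{L_{1}}$.

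The only mild subtlety I anticipate is verifying that in the present PEL setting the boundedness criterion of \cite[2.30, Corollary 2.29]{RZ-Aoms} still applies to the subscheme cut out by the isogeny condition on $\rho_{X,L_{1}^{\pm}}$; but this is guaranteed by the fact that the cokernels of $\rho_{X,L_{1}^{-}}$ and $\rho_{X,L_{1}^{+}}$ are subquotients of the fixed finite group scheme $L_{1,R}^{+}/L_{1,R}^{-}$, exactly as in the $p$-split case treated in Lemma \ref{projective-split}. Once the closedness and boundedness are in place, representability and projectivity are immediate, and the same reasoning applied to $L_{3}^{\pm}$ handles $\calM_{L_{3}}$.
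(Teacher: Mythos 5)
Your proposal is correct and takes essentially the same approach as the paper: closedness from \cite[Proposition 2.9]{RZ-Aoms}, boundedness in the sense of \cite[2.30]{RZ-Aoms} from the sandwich isogenies $\XX_{L_{1}^{-},R}\to X\to \XX_{L_{1}^{+},R}$, and projectivity from \cite[Corollary 2.29]{RZ-Aoms} (the paper just inserts one extra step, citing \cite[Corollary 2.31]{RZ-Aoms} for quasi-projectivity before concluding). The case of $\calM_{L_{3}}$ is handled the same way, as you note.
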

\begin{proof}
We will only give the proof for $\calM_{L_{1}}$. The subfunctor is representable by a closed subscheme of $\calM$  by \cite[Proposition 2.9]{RZ-Aoms}. Moreover $\calM_{L_{1}}$ is bounded in the sense of \cite[2.30]{RZ-Aoms}. This can be seen from the fact that we have construced isogenies
$$\XX_{L_{1}^{-}, R} \xrightarrow{\rho_{X,L_{1}^{-}}} X  \xrightarrow{\rho_{X,L_{1}^{+}}}\XX_{L_{1}^{+}, R}.$$
Hence $\calM_{L_{1}}$ is also quasi-projective \cite[Corollary 2.31]{RZ-Aoms}. Thus $\calM_{L_{1}}$ is closed subscheme of a projective scheme by \cite[Corollary 2.29]{RZ-Aoms}. Thus it is projective it self.
\end{proof}

\begin{theorem}\label{isom0}
The maps $f_{1}: \calM_{L_{1}}\rightarrow Y^{(-)}_{V(L_{1})}$ and  $f_{3}: \calM_{L_{3}}\rightarrow Y^{(+)}_{V(L_{3})}$ are isomorphisms. 
\end{theorem}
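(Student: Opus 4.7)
The plan is to follow the strategy of the $p$-split case (Theorem \ref{main-p-split}): show that $f_1$ is a proper, quasi-finite, birational morphism onto a normal target, and then invoke Zariski's main theorem. Since the arguments for $f_1$ and $f_3$ are formally identical, I describe only $f_1$.

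First I would establish properness of $f_1$: by Proposition \ref{projectivity-L1} the source $\calM_{L_1}$ is projective over $\FF$, while the target $Y^{(-)}_{V(L_1)}$ is separated as a locally closed subscheme of a partial flag variety. Next I would upgrade the bijection of Proposition \ref{DL1} on $\FF$-points to a bijection on $k$-points for every field extension $k/\FF$ by replacing Dieudonn\'e modules with Zink's windows of displays, as indicated in the remark after Proposition \ref{DL3}; combined with properness this makes $f_1$ finite and universally bijective. The target $Y^{(-)}_{V(L_1)}$ is moreover normal, being isomorphic to the smooth Fermat hypersurface $x_0^{p+1}+x_1^{p+1}+x_2^{p+1}+x_3^{p+1}=0$ in $\PP^3$ (smoothness is immediate since $p+1$ is coprime to $p$).

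The remaining and principal step, which is the main obstacle, is to verify that $f_1$ is birational. For this I would construct an inverse morphism on an open dense subscheme using Grothendieck-Messing deformation theory. Given a reduced $\FF$-algebra $R$ of finite type and a rank-$3$ sub-bundle $U \subset V(L_1) \otimes_\FF R$ with $U^\perp \subset U$, one lifts $U$ to a sub-$W(R)$-module $\widetilde{M}_0 \subset L_{1,W_0} \otimes_{W_0} W(R)$ along the canonical PD-thickening $W(R) \twoheadrightarrow R$, and then produces a $p$-divisible group $X/R$ from the Dieudonn\'e module $\widetilde{M}_0 \oplus F^{-1}(p\widetilde{M}_0^\vee)$ via Grothendieck-Messing, equipped with the $\calO_E$-action, polarization, and quasi-isogeny to $\XX_{L_1^+, R}$ inherited from $L_1^+$. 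A direct comparison with the construction of $f_1$ shows that this inverts $f_1$ on the preimage of the open dense Deligne-Lusztig cell $X_B(w_2)$, yielding birationality. Combining properness, quasi-finiteness, birationality, and normality of the target, Zariski's main theorem implies that $f_1$ is an isomorphism; the identical argument gives the statement for $f_3$.
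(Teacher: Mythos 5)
Your strategy matches the paper's in outline: properness from projectivity of the source and separatedness of the target, bijectivity on $k$-points for every field extension $k/\FF$ via Zink's windows, normality of the target, and Zariski's main theorem. The one place you diverge is the birationality step, which you flag as the ``main obstacle'' and propose to resolve by constructing an explicit inverse via Grothendieck--Messing theory. That detour is unnecessary: birationality is already a formal consequence of bijectivity on $k$-points for \emph{all} field extensions $k/\FF$, not merely algebraically closed ones, which you have established. Concretely, let $\eta$ be the generic point of $Y^{(-)}_{V(L_1)}$ and $K=\kappa(\eta)$; the tautological $K$-point of $Y^{(-)}_{V(L_1)}$ supported at $\eta$ lifts uniquely to a $K$-point of $\calM_{L_1}$, and unwinding the definitions shows the fibre of $f_1$ over $\eta$ is $\mathrm{Spec}(K)$, so $f_1$ is an isomorphism on generic points. (This is exactly why one needs the bijection for all $k$ and not just $k=\FF$: the relative Frobenius is bijective on $\overline{k}$-points but not on $k$-points for imperfect $k$, and it is not birational.) This is the argument the paper is invoking when it passes from bijectivity on $k$-points to birationality.

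Moreover, the inverse construction you sketch is not quite correct as stated: there is no canonical lift of the sub-bundle $U$ to a sub-$W(R)$-module of $L_{1,W_0}\otimes_{W_0}W(R)$, and Grothendieck--Messing theory is a deformation theory for lifting Hodge filtrations along nilpotent PD-thickenings, not a mechanism for producing a $p$-divisible group from a submodule of the ambient crystal. What you likely intend is Zink's display theory, but even there the reconstruction over an imperfect base requires care and is not needed here. Dropping this step in favour of the formal birationality argument above closes the proof and brings it into agreement with the paper's.
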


\begin{proof}
We have shown that $f_{1}$ is bijective on $k$-points for any field extension $k$ over $\FF$. It then follows that $f_{1}$ is birational, proper, quasi-finite.  Since $Y^{(-)}_{V(L_{1})}$ is a closed subscheme of a projective scheme, it is projective. Moreover it is smooth and hence normal.Then by Zariski's main theorem, $f_{1}$ is an isomorphism.
\end{proof}

\subsection{The Bruhat-Tits stratification in the $p$-inert case} We can now transfer the results on the Bruhat-Tits stratification from the set theoretic level in Proposition \ref{DL1} and Proposition \ref{DL3} to the scheme theoretic level. We define the closed \emph{Bruhat-Tits stratum} of type $i$ for $i=1, 3, \{0,2\}$ by the union $$\calM_{\{i\}}=\bigcup_{L_{i}}\calM_{\{i\}}$$ where $L_{i}$ runs through all the vertex lattices of type $i$.

We define subschemes $\mathcal{M}^{\circ}_{L_{1}}, \calM^{\circ}_{L_{1},\{3\}}, \calM_{L_{1},\{0,2\}}$ of $\mathcal{M}_{L_{1}}$ called the Ekedahl-Oort strata by
\begin{equation}\label{EO-L1}
\begin{split}
&\mathcal{M}^{\circ}_{L_{1}}=\mathcal{M}_{L_{1}}-( \mathcal{M}_{\{0, 2\}}\cup \mathcal{M}_{\{3\}});\\
&\calM_{L_{1},\{3\}}=\calM_{L_{1}}\cap \calM_{\{3\}};\\
&\calM^{\circ}_{L_{1},\{3\}}=(\calM_{L_{1}}\cap \calM_{\{3\}})- \calM_{\{0, 2\}};\\
&\calM_{L_{1},\{0, 2\}}=\calM_{L_{1}}\cap \calM_{\{0,2\}}. \\
\end{split}
\end{equation}
Similarly, we define subschemes $\mathcal{M}^{\circ}_{L_{3}}, \calM^{\circ}_{L_{3},\{1\}}, \calM_{L_{3},\{0, 2\}}$ of $\mathcal{M}_{L_{3}}$ called the Ekedahl-Oort strata by
\begin{equation}\label{EO-L1}
\begin{split}
&\mathcal{M}^{\circ}_{L_{3}}=\mathcal{M}_{L_{3}}-( \mathcal{M}_{\{0, 2\}}\cup \mathcal{M}_{\{1\}});\\
&\calM_{L_{3},\{1\}}=\calM_{L_{3}}\cap \calM_{\{1\}};\\
&\calM^{\circ}_{L_{3},\{1\}}=(\calM_{L_{3}}\cap \calM_{\{1\}})- \calM_{\{0, 2\}};\\
&\calM_{L_{3},\{0, 2\}}=\calM_{L_{3}}\cap \calM_{\{0, 2\}}. \\
\end{split}
\end{equation}

We have the Ekedahl-Oort stratification $$\calM_{L_{1}}= \mathcal{M}^{\circ}_{L_{1}}\sqcup \calM^{\circ}_{L_{1},\{3\}} \sqcup \calM_{L_{1}, \{0,2\}}$$ and  $$\calM_{L_{3}}= \mathcal{M}^{\circ}_{L_{3}}\sqcup \calM^{\circ}_{L_{3},\{1\}} \sqcup \calM_{L_{3},\{0,2\}}.$$

\begin{theorem}\label{stratification}
Let $L_{1}$ be vertex lattice of type $1$. The isomorphism $f_{1}: \calM_{L_{1}}\rightarrow Y^{(-)}_{V(L_{1})}$ is compatible with the stratification on both sides:
\begin{enumerate}
\item $\calM^{\circ}_{L_{1}}$ is isomorphic to $X_{B}(w_{2})$;
\item $\calM^{\circ}_{L_{1},\{3\}}$ is isomorphic to $X_{B}(w_{1})$;
\item  $\calM_{L_{1},\{0, 2\}}$ is isomorphic to $X_{P_{\{3, 1\}}}(1)$.
\end{enumerate}
Let $L_{3}$ be a vertex lattice of type $3$. The isomorphism $f_{3}: \calM_{L_{3}}\rightarrow Y^{(+)}_{V(L_{3})}$ is compatible with the stratification on both sides:
\begin{enumerate}
\item $\calM^{\circ}_{L_{3}}$ is isomorphic to $X_{B}(w_{2})$;
\item $\calM^{\circ}_{L_{3},\{1\}}$ is isomorphic to $X_{B}(w_{1})$;
\item  $\calM_{L_{3},\{0, 2\}}$ is isomorphic to $X_{P_{\{1, 3\}}}(1)$.
\end{enumerate}
\end{theorem}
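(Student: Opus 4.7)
The plan is to reduce the scheme-theoretic statement to the set-theoretic bijections already proved, exploiting the fact that every scheme in sight is reduced.

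First I would observe that $\calM_{L_1}$ is a reduced projective $\FF$-scheme (Proposition \ref{projectivity-L1} together with the definition of $\calM$ as $\calN_{\mathrm{red}}(0)$), and that each closed Bruhat--Tits stratum $\calM_{\{i\}}=\bigcup_{L_i}\calM_{L_i}$ is a reduced closed subscheme of $\calM$ (finite union of closed subschemes, taken with the reduced structure). Consequently, the scheme-theoretic intersections $\calM_{L_1,\{3\}}=\calM_{L_1}\cap\calM_{\{3\}}$ and $\calM_{L_1,\{0,2\}}=\calM_{L_1}\cap\calM_{\{0,2\}}$ are well-defined reduced closed subschemes of $\calM_{L_1}$, and the open strata $\calM^{\circ}_{L_1}$ and $\calM^{\circ}_{L_1,\{3\}}$ inherit the reduced locally closed structure as complements.

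On the Deligne--Lusztig side, $X_{P_{3,1}}(1)$ is the (reduced) zero-dimensional closed stratum consisting of the $\tau$-fixed points of $Y^{(-)}_{V(L_1)}$, while $\overline{X_B(w_1)}=X_B(w_1)\sqcup X_{P_{3,1}}(1)$ is a reduced one-dimensional closed subscheme, as described in Theorem \ref{DL-stratification-inert}. Both are reduced closed subschemes of the smooth projective variety $Y^{(-)}_{V(L_1)}$.

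Next I would invoke Theorem \ref{isom0}: $f_1$ is an isomorphism of reduced $\FF$-schemes of finite type, so a reduced closed subscheme of either side is determined uniquely by its set of $\FF$-points. The set-theoretic compatibility stated in Proposition \ref{DL1} then immediately forces the scheme-theoretic identifications $f_1(\calM_{L_1,\{0,2\}})=X_{P_{3,1}}(1)$ and $f_1(\calM_{L_1,\{3\}})=\overline{X_B(w_1)}$. Taking complements, the locally closed open strata match as well: $f_1$ restricts to isomorphisms $\calM^{\circ}_{L_1,\{3\}}\cong X_B(w_1)$ and $\calM^{\circ}_{L_1}\cong X_B(w_2)$, with the induced reduced locally closed subscheme structures. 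The argument for $f_3$ and $L_3$ is entirely parallel, using Proposition \ref{DL3} in place of Proposition \ref{DL1}, and noting that $Y^{(+)}_{V(L_3)}$ has exactly the mirror stratification via $U\mapsto U^\perp$.

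I do not expect a serious obstacle here: the only thing to watch is that the scheme structure on each stratum coincides with the reduced structure inherited from the ambient scheme, which is automatic because $\calM$ itself is declared reduced and the Deligne--Lusztig strata are by definition reduced locally closed subvarieties of the flag variety. Thus the theorem follows formally from Theorem \ref{isom0} and the set-theoretic propositions, with no further moduli-theoretic computation required.
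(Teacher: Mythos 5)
Your proof is correct and follows essentially the same route as the paper: once Theorem \ref{isom0} establishes that $f_1$ and $f_3$ are isomorphisms, the compatibility of the stratifications reduces to the set-theoretic bijections of Propositions \ref{DL1} and \ref{DL3}, since the strata are reduced subschemes determined by their $\FF$-points. Your write-up is somewhat more explicit about why reducedness justifies passing to $\FF$-points, but the argument is the same as the paper's.
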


\begin{proof}
Once we know that $f_{1}$ and $f_{3}$ are isomorphisms, the statements about compatibilities of the stratifications can be checked on $\FF$-points. These have been checked in Proposition \ref{DL1} and Proposition \ref{DL3}.
\end{proof}

Notice that the  Ekedahl-Oort stratification has the desired stratification properties. Indeed, the closure of the open stratum $\mathcal{M}^{\circ}_{L_{1}}$ is the union $\mathcal{M}^{\circ}_{L_{1}}\sqcup \calM^{\circ}_{L_{1},\{3\}} \sqcup \calM_{L_{1},\{0,2\}}$ and the closure of the stratum $\calM^{\circ}_{L_{1},\{3\}}$ is the union $\calM^{\circ}_{L_{1},\{3\}}\sqcup \calM_{L_{1},\{0,2\}}$.

Finally we can put everything together and define the Bruhat-Tits strata of $\calM$ by 
\begin{equation}
\begin{split}
&\calM^{\circ}_{\{1\}}=\bigcup_{L_{1}}\calM^{\circ}_{L_{1}}\text{ where $L_{1}$ runs through all vertex lattices of type $1$};\\
&\calM^{\circ}_{\{3\}}=\bigcup_{L_{3}}\calM^{\circ}_{L_{3}}\text{ where $L_{1}$ runs through all vertex lattices of type $3$};\\ 
&\calM^{\circ}_{\{13\}}=\bigcup_{L_{1}}\calM^{\circ}_{L_{1},\{3\}}\text{ where $L_{1}$ runs through all vertex lattices of type $1$} ;\\
&\calM_{\{0,2\}}=\bigcup_{L_{02}}\calM_{L_{0, 2}}\text{ where $L_{02}$ runs through all vertex lattices of type $\{0, 2\}$}. \\
\end{split}
\end{equation}

\subsection{The main result in the $p$ inert case} We now summarize the results obtained in the previous sections and finish the description of the special fiber for Rapoport-Zink space in the case $p$ is inert.

\begin{theorem}\label{main-result-inert}
The formal scheme $\calN$ can be written as $\calN=\bigcup_{i\in \ZZ} \calN(i)$. The connected components $\calN(i)$ are all isomorphic to $\calN(0)$. Let $\calM=\calN_{red}(0)$, then $\calM$ is pure of dimension $2$.
\begin{enumerate}
\item $\calM$ admits the Bruhat-Tits stratification $$\calM=\calM^{\circ}_{\{1\}}\sqcup \calM^{\circ}_{\{3\}} \sqcup \calM^{\circ}_{\{1,3\}}\sqcup \calM_{\{0,2\}}.$$

\item The closure of $\calM^{\circ}_{\{1\}}$ is $\calM_{\{1\}}=\bigcup_{L_{1}}\calM_{L_{1}}$ where  $L_{1}$ runs through all the vertex lattice of type $1$ and each $\calM_{L_{1}}$ is isomorphic to the Fermat hypersurface $x_{0}^{p+1}+x_{1}^{p+1}+x_{2}^{p+1}+x_{3}^{p+1}=0$. Moreover $\calM_{L_{1}}$ admits a stratification $$\calM_{L_{1}}=\calM^{\circ}_{L_{1}}\sqcup \calM^{\circ}_{1,\{3\}}\sqcup \calM_{L_{1}, \{0,2\}}$$ called the Ekedahl-Oort stratification. The closure of $\calM^{\circ}_{L_{1}}$ is $\calM_{L_{1}}$ and $\calM^{\circ}_{L_{1}}$ is isomorphic to a unitary Deligne-Lusztig variety. The closure of $\calM^{\circ}_{L_{1}, \{3\}}$ is ismorphic to $\PP^{1}$ and the complement of $\calM^{\circ}_{L_{1}, \{3\}}$ in $\PP^{1}$ is precisely $\calM_{L_{1}, \{0, 2\}}$ which consists precisely the $\FF_{p^{2}}$-rational points of $\PP^{1}$. 

\item The closure of $\calM^{\circ}_{\{3\}}$ is $\calM_{\{3\}}=\bigcup_{L_{3}}\calM_{L_{3}}$ where  $L_{3}$ runs through all the vertex lattice of type $3$ and each $\calM_{L_{3}}$ is isomorphic to the Fermat hypersurface $x_{0}^{p+1}+x_{1}^{p+1}+x_{2}^{p+1}+x_{3}^{p+1}=0$. Moreover $\calM_{L_{3}}$ admits a stratification $$\calM_{L_{3}}=\calM^{\circ}_{L_{3}}\sqcup \calM^{\circ}_{3,\{1\}}\sqcup \calM_{L_{3}, \{0,2\}}$$ called the Ekedahl-Oort stratification. The closure of $\calM^{\circ}_{L_{3}}$ is $\calM_{L_{3}}$ and $\calM^{\circ}_{L_{3}}$ is isomorphic to a unitary Deligne-Lusztig variety. The closure of $\calM^{\circ}_{L_{3}, \{1\}}$ is ismorphic to $\PP^{1}$ and the complement of $\calM^{\circ}_{L_{3}, \{1\}}$ in $\PP^{1}$ is precisely $\calM_{L_{3}, \{0,2\}}$ which consists precisely the $\FF_{p^{2}}$-rational points of $\PP^{1}$.

\item The intersection between $\calM_{L_{1}}$ and $\calM_{L_{3}}$ for a vertex lattice $L_{1}$ of type $1$ and a vertex lattice $L_{3}$ of type 3, if nonempty, is isomorphic to a $\PP^{1}$. The intersection between $\calM_{L_{1}}$ and $\calM_{L^{\prime}_{1}}$ for two vertex lattices $L_{1}$ and $L^{\prime}_{1}$ of type $1$, if nonempty, is a point which is superspecial. The intersection between $\calM_{L_{3}}$ and $\calM_{L^{\prime}_{3}}$ for two vertex lattices $L_{3}$ and $L^{\prime}_{3}$ of type $3$, if nonempty, is  a point which is superspecial.

\end{enumerate}
\end{theorem}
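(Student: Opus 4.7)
The plan is to assemble the pieces established in the preceding subsections; most of the real work has already been done. The decomposition $\calN = \bigsqcup_{i \in \ZZ} \calN(i)$ with $\calN(i) \cong \calN(0)$ is the opening remark of Subsection \ref{PEL-case}, based on \cite[Proposition 1.18]{Vol-can10}, so I reduce to analyzing $\calM = \calN_{red}(0)$. For the Bruhat-Tits decomposition in (1), I combine Proposition \ref{chain1}, which produces a vertex lattice $L(D)$ for every $D \in \calM(\FF)$, with the scheme-theoretic lattice strata $\calM_{L}$ produced by the isogeny trick in Proposition \ref{projectivity-L1}. Taking $\calM_{\{i\}} = \bigcup_{L_{i}} \calM_{L_{i}}$ gives a closed subscheme, and the definitions of the open pieces $\calM^{\circ}_{\{1\}}, \calM^{\circ}_{\{3\}}, \calM^{\circ}_{\{1,3\}}$ recorded just before the theorem then assemble into the claimed four-term disjoint decomposition.

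For parts (2) and (3), the Fermat hypersurface description of each $\calM_{L_{1}}$ (respectively $\calM_{L_{3}}$) comes from combining Theorem \ref{isom0}, which gives $f_{1}: \calM_{L_{1}} \xrightarrow{\sim} Y^{(-)}_{V(L_{1})}$, with the identification of $Y^{(\pm)}_{V_{0}}$ with $x_{0}^{p+1} + x_{1}^{p+1} + x_{2}^{p+1} + x_{3}^{p+1} = 0$ in suitable coordinates noted just after equation \eqref{DL_3}. The internal Ekedahl-Oort stratification then transfers, via Theorem \ref{stratification}, to the Deligne-Lusztig stratification of Theorem \ref{DL-stratification-inert}; in particular $\calM^{\circ}_{L_{1}}$ is open and dense in $\calM_{L_{1}}$, the closure of $\calM^{\circ}_{L_{1}, \{3\}}$ is isomorphic to $\PP^{1}$, and its complement in that $\PP^{1}$ is exactly $\calM_{L_{1}, \{0,2\}}$, corresponding to the $\FF_{p^{2}}$-rational points. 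The same argument works verbatim for $L_{3}$ after swapping $Y^{(-)}$ for $Y^{(+)}$ and $P_{3,1}$ for $P_{1,3}$.

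For part (4), the intersection statements among maximal strata are precisely the two lemmas immediately preceding the theorem: $\calM_{L_{1}} \cap \calM_{L'_{1}}$, if nonempty, is the superspecial point attached to the type-$\{0,2\}$ lattice $L_{1} \cap L'_{1}$, and symmetrically for type $3$; while $\calM_{L_{1}} \cap \calM_{L_{3}}$, when nonempty, coincides with both $\calM_{L_{1}, \{3\}}$ and $\calM_{L_{3}, \{1\}}$ and is thus a $\PP^{1}$. Purity of dimension $2$ follows because Proposition \ref{chain1} implies $\calM = \bigcup_{L_{1}} \calM_{L_{1}} \cup \bigcup_{L_{3}} \calM_{L_{3}}$ and each maximal stratum is a two-dimensional irreducible Fermat surface. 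The main bookkeeping obstacle is verifying the scheme-theoretic closure relations $\overline{\calM^{\circ}_{L_{1}}} = \calM_{L_{1}}$ and $\overline{\calM^{\circ}_{L_{1}, \{3\}}} = \calM_{L_{1}, \{3\}}$, but these are transferred via $f_{1}$ from the corresponding, straightforwardly verified closure relations for $X_{B}(w_{2})$ and $X_{B}(w_{1})$ inside $Y^{(-)}_{V(L_{1})}$ recorded in Theorem \ref{DL-stratification-inert}.
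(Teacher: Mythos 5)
Your proposal is correct and takes essentially the same route as the paper: Theorem~\ref{main-result-inert} is presented there without a separate proof environment, precisely because it is an assembly of Proposition~\ref{chain1}, Proposition~\ref{projectivity-L1}, Theorem~\ref{isom0}, Theorem~\ref{stratification}, Theorem~\ref{DL-stratification-inert}, Proposition~\ref{L2}, and the two lemmas on pairwise intersections, and you cite exactly these ingredients in the right roles. One small imprecision in part (4): $\calM_{L_{1},\{3\}}$ is by definition $\calM_{L_{1}}\cap\calM_{\{3\}}$, i.e.\ a union over \emph{all} type-$3$ lattices and hence generically a disjoint union of several $\PP^{1}$'s; for a fixed pair $(L_{1},L_{3})$ the intersection $\calM_{L_{1}}\cap\calM_{L_{3}}$ is one irreducible component of that union (the $\PP^{1}$ attached, under $f_{1}$, to the $\tau$-stable isotropic plane in $V(L_{1})$ determined by $L_{3}$), not the whole of $\calM_{L_{1},\{3\}}$. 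This does not affect your conclusion. Your argument for purity of dimension $2$ is also correct: $\calM_{\{0,2\}}$ is absorbed into $\calM_{\{1\}}\cup\calM_{\{3\}}$ (every superspecial point lies on both a type-$1$ and a type-$3$ stratum), and each $\calM_{L_{1}}$, $\calM_{L_{3}}$ is a smooth irreducible Fermat surface, so every irreducible component of $\calM$ has dimension $2$.
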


\subsection{Application to the supersingular locus: $p$-inert case} The Rapoport-Zink uniformization theorem Theorem \eqref{RZ-uniform} bombined with the Theorem \ref{main-result-inert} proved above yield the following result about the supersingular locus $\Shim^{ss}_{U^{p}}$ in the case $p$ is inert in $E$. 
\begin{theorem}
The scheme $\Shim^{ss}_{U^{p}}$ is pure of dimension $2$. The irreducible components are all isomorphic to the Fermat hypersurface $x_{0}^{p+1}+x_{1}^{p+1}+x_{2}^{p+1}+x_{3}^{p+1}=0$. There are two types of irreducible components corresponding to the node $1$ and  node $3$ in the affine Dynkin diagram of type $\tilde{A}_{3}$. We refer to them as type $1$ components and type $3$ components. 
\begin{enumerate}
\item The intersection of a type $1$ component and a type $3$ component is a projective line if this intersection is non-empty;
\item The intersection of a type $1$ component with different type $1$ component is a supserspecial point if the intersection is non-empty;
\item The intersection of a type $3$ component with different type $3$ component is a supserspecial point if the intersection is non-empty.
\end{enumerate}
\end{theorem}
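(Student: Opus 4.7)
The plan is to transfer the scheme-theoretic description of $\calM=\calN_{red}(0)$ obtained in Theorem \ref{main-result-inert} across the Rapoport-Zink uniformization isomorphism
\begin{equation*}
\alpha: I(\QQ)\backslash \calN_{\text{red}}\times G(\AAA^{p}_{f})/ U^{p} \xrightarrow{\sim} \Shim^{ss}_{U^{p}},
\end{equation*}
just as was done in the $p$-split case. First, I would note that $\calN_{\text{red}} = \bigsqcup_{i\in\ZZ}\calN_{red}(i)$ with each $\calN_{red}(i)\cong \calM$, so that locally on the source $\alpha$ is étale (in fact, a local isomorphism for $U^{p}$ sufficiently small since the action of $I(\QQ)$ on $\calN_{\text{red}}\times G(\AAA^{p}_{f})/U^{p}$ is free for small enough $U^{p}$, by a standard argument with neat subgroups). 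Purity of dimension $2$ for $\Shim^{ss}_{U^{p}}$ then follows immediately from part~(1) of Theorem \ref{main-result-inert}.

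Next, for the irreducible components, I would invoke that the irreducible components of $\calM$ are precisely the closures $\calM_{L_{1}}$ and $\calM_{L_{3}}$ (the two families of Fermat hypersurfaces $x_{0}^{p+1}+x_{1}^{p+1}+x_{2}^{p+1}+x_{3}^{p+1}=0$ from parts~(2) and (3) of Theorem \ref{main-result-inert}). Taking the quotient by $I(\QQ)$ and the product with the discrete double coset space $G(\AAA^{p}_{f})/U^{p}$ only permutes these components and identifies some of them in $I(\QQ)$-orbits; since $I(\QQ)$ acts via automorphisms of the formal scheme $\calN$ compatible with the additional PEL structure, each orbit of an irreducible component still looks like the Fermat hypersurface. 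The distinction into type $1$ and type $3$ components is preserved because $I(\QQ)$ respects the type of a vertex lattice (its action on the Bruhat-Tits building of $J_{b}$ preserves the $\tilde{A}_{3}$-node labeling modulo the Frobenius symmetry that swaps $\{0,2\}$ but fixes $\{1\},\{3\}$).

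For the three intersection statements, I would combine part~(4) of Theorem \ref{main-result-inert} with the local nature of $\alpha$: near any point of $\Shim^{ss}_{U^{p}}$, the scheme is étale-locally modeled by $\calM$, hence two irreducible components of $\Shim^{ss}_{U^{p}}$ meet in the image of an intersection of corresponding components of $\calM$ (for some choice of representatives in $G(\AAA^{p}_{f})/U^{p}$). Therefore a type $1$ meeting a type $3$ yields $\PP^{1}$, two type $1$'s or two type $3$'s meet in a single superspecial point, and the superspeciality condition is preserved under $\alpha$ since it is intrinsic to the underlying $p$-divisible group.

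The main obstacle I foresee is the bookkeeping around the $I(\QQ)$-action: one must verify that for $U^{p}$ sufficiently small the stabilizers in $I(\QQ)$ of any connected component of $\calN_{\text{red}}\times G(\AAA^{p}_{f})/U^{p}$ are trivial, so that $\alpha$ is étale and the component-by-component transfer of geometric features (Fermat form, $\PP^{1}$ intersection, superspecial points) is literal rather than merely up to quotient. Once neatness of $U^{p}$ is invoked, however, each assertion of the theorem is a direct read-off from Theorem \ref{main-result-inert}, exactly parallel to the proof given in the $p$-split case.
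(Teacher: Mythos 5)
Your proposal is correct and follows the same route as the paper: the paper's proof consists of a single sentence invoking the Rapoport--Zink uniformization isomorphism \eqref{RZ-uniform} together with Theorem~\ref{main-result-inert}, exactly as you do. You have simply spelled out the mechanics (freeness of the $I(\QQ)$-action for $U^{p}$ neat, local identification of $\Shim^{ss}_{U^{p}}$ with $\calM$, preservation of vertex-lattice types and of superspeciality) that the paper leaves implicit.
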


\section{Bruhat-Tits stratification of affine Deligne-Lusztig varieties}

In this final section we would like point out how our results fit in the results proved in G\"{o}rtz and He \cite{GH-Cam15} in terms of the affine Deligne-Lusztig varieties.
\subsection{Affine Deligne Lusztig variety} Let $F$ be a finite extension of $\QQ_{p}$ and $\breve{F}$ be the completion of the maximal unramified extension of $F$. Let $G$ be a connected reductive group over $F$ and we write $\breve{G}$ its base change to $\breve{F}$. Then $\breve{G}$ is quasi-split and we choose a maximal split torus $S$ and denote by $T$ its centralizer.  We know $T$ is a maximal torus and we denote by $N$ its normalizer. The relative Weyl group is defined to be $W=N(\breve{F})/ T(\breve{F})$. This is a finite group. Let $\Gamma$ be the Galois group of $\breve{F}$ and we have the following Kottwitz homomorphism \cite{RR96}:
$$\kappa_{G}: G(\breve{F})\rightarrow X_{*}(\breve{G})_{\Gamma}.$$
Denote by $\widetilde{W}$ the Iwahori Weyl group of $\breve{G}$ which is by definition $\widetilde{W}=N(\breve{F})/ T(\breve{F})_{1}$ where $T(\breve{F})_{1}$ is the kernel of the Kottwitz homomorphism for $T(\breve{F})$. Let $\breve{\mathfrak{B}}(G)$ be the Bruhat-Tits building of $G$ over $\breve{F}$. The choice of $S$ determines an standard apartment $\breve{\mathfrak{A}}$ which $\widetilde{W}$ acts on by affine transformations. We fix a $\sigma$-invariant alcove $\mathfrak{a}$ and a special vertex of $\mathfrak{a}$. Inside the Iwahori Weyl group $\widetilde{W}$, there is a copy of the affine Weyl group $W_{a}$ which can be identified with $N(\breve{F})\cap G(\breve{F})_{1}/ T(\breve{F})_{1}$ where $G(\breve{F})_{1}$ is the kernel of the Kottwitz morphism for $G(\breve{F})$. The group $\widetilde{W}$ is not quite a Coxeter group while $W_{a}$ is generated by the affine reflections denoted by $\tilde{\mathbb{S}}$ and $(\widetilde{W}, \tilde{\mathbb{S}})$ form a Coxeter system. We in fact have $\widetilde{W}= W_{a}\rtimes \Omega$ where $\Omega$ is the normalizer of a fixed base alcove $\mathfrak{a}$ and more canonically $\Omega=X_{*}(T)_{\Gamma}/ X_{*}(T_{sc})_{\Gamma}$ where $T_{sc}$ is the preimage of $T\cap G_{der}$ in the simply connected cover $G_{sc}$ of $G_{der}$.

Let $\mu\in X_{*}(T)$ be a minuscule cocharacter of $G$ over $\breve{F}$ and $\lambda$ its image in $X_{*}(T)_{\Gamma}$.  We denote by $\tau$ the projection of $\lambda$ in $\Omega$. The \emph{admissible subset} of $\widetilde{W}$ is defined to be
$$\Adm(\mu)=\{w\in \widetilde{W}; w \leq x(\lambda) \text{ for some }x\in W \}.$$
Here $\lambda$ is considered as a translation element in $\widetilde{W}$. Let $K\subset\tilde{\mathbb{S}}$ and $\breve{K}$ its corresponding parahoric subgroup. Let $\widetilde{W}_{K}$ be the subgroup defined by $N(\breve{F})\cap \breve{K}/T(\breve{F})_{1}$. We have the identification $\breve{K}\backslash G(\breve{F})/\breve{K}= \widetilde{W}_{K}\backslash \widetilde{W}/ \widetilde{W}_{K}$. Therefore we can define a relative position map 
\begin{equation}
\begin{split}
\inv: &G(\breve{F})/\breve{K}\times G(\breve{F})/\breve{K}\rightarrow \widetilde{W}_{K}\backslash \widetilde{W}/ \widetilde{W}_{K}\\
 &(g,h)\rightarrow \breve{K}g^{-1}h\breve{K}.\\
\end{split}
\end{equation}

Let $b\in G(\breve{F})$ be an element whose image in $B(G)$, the $\sigma$-conjugacy class of $G(\breve{F})$, lies in the subsets $B(G, \mu)$ of \emph{neutrally acceptable elements} see \cite[4.5, 4.6]{Rap-Ast05}. For $w\in \widetilde{W}_{K}\backslash \widetilde{W}/ \widetilde{W}_{K}$ and $b\in G(\breve{F})$, we define the \emph{affine Deligne-Lusztig variety}
to be the set 
$$X_{w}(b)=\{g\in G(\breve{F})/\breve{K}; \inv(g, b\sigma(g))=w\}.$$
Thanks to the work of \cite{BS-Inv17} and \cite{Zhu-Ann17}, this set can be viewed as an ind-closed-subscheme in the affine flag variety $\breve{G}/\breve{K}$. In this note, we only consider it as a set. The Rapoport-Zink space is not directly related to the affine Deligne-Lusztig variety but rather to the following union of affine Deligne-Lusztig varieties
$$X(\mu, b)_{K}=\{g\in G(\breve{F})/ \breve{K}; g^{-1}b\sigma(g)\in \breve{K}w\breve{K}, w\in \Adm(\mu) \}.$$
We recall the group $J_{b}$ is defined by the $\sigma$-centralizer of $b$ that is $$J_{b}(R)=\{g\in G(R\otimes_{F}\breve{F}); g^{-1}b\sigma(g)=b\}$$ for any $F$-algebra $R$. In the following we will assume that $b$ is \emph{basic} and in this case $J_{b}$ is an inner form of $G$ see \cite{RR96}.

\subsection{Coxeter type ADLV}We define $\Adm^{K}(\mu)$ to be the image of  $\Adm(\mu)$ in $\widetilde{W}_{K}\backslash\widetilde{W}/\widetilde{W}_{K}$ and $^{K}\widetilde{W}$ to be the set of elements of minimal length in $\widetilde{W}_{K}\backslash\widetilde{W}$.  We define the set $\mathrm{EO}^{K}(\mu)=\Adm^{K}(\mu)\cap ^{K}\widetilde{W}$. For $w\in W_{a}$, we set 
$$\mathrm{supp}_{\sigma}(w\tau)=\bigcup_{n\in \ZZ}(\tau\sigma)^{n}(\mathrm{supp}(w)).$$
If the length $l(w)$ of $w$ agrees with the cardinality of $\mathrm{supp}_{\sigma}(w\tau)/\langle\tau\sigma\rangle$, we say $w\tau$ is a $\sigma$-Coxeter element. We denote by $\mathrm{EO}^{K}_{\sigma,\mathrm{cox}}(\mu)$ the subset of $\mathrm{EO}^{K}(\mu)$ such that $w$ is a $\sigma$-Coxeter element and $\mathrm{supp}_{\sigma}(w)$ is not $\widetilde{\mathbb{S}}$. A \emph{$K$-stable piece} is a subset of $G(\breve{F})$ of the form $\breve{K}\cdot_{\sigma}\breve{I}w\breve{I}$ where $\cdot_{\sigma}$ means $\sigma$-conjugation and $\breve{I}$ is an Iwahori subgroup and $w\in {^{K}\widetilde{W}}$. Then we define the Ekedahl-Oort stratum attached to $w\in \mathrm{EO}^{K}(\mu)$ of $X(\mu, b)_{K}$ by the set $$X_{K,w}(b)=\{g\in G(\breve{F})/\breve{K}; g^{-1}b\sigma(g)\in  \breve{K}\cdot_{\sigma}IwI\}.$$  Then by \cite{GH-Cam15} we have the following Ekedahl-Oort stratification 
\begin{equation}X(\mu, b)_{K}=\bigcup_{w\in\mathrm{EO}^{K}(\mu)}X_{K,w}(b).\end{equation} 
The case when 
\begin{equation}\label{ADLV-EO}X(\mu, b)_{K}=\bigcup_{w\in\mathrm{EO}^{K}_{\sigma,\mathrm{cox}}(\mu)}X_{K,w}(b)\end{equation}
is particular interesting and when this happens we say the datum $(G, \mu, K)$ is of Coxeter type. The datum $(G, \mu, K)$ being Coxeter type or not depends only on the associated datum $(\widetilde{W}, \lambda, K, \sigma)$ where $\lambda$ is the image of $\mu\in X_{*}(T)_{I}$ and $\sigma$ is the induced automorphism of the Frobenius $\sigma$ on $\widetilde{W}$. The set of $(G, \mu, K)$ is classified in \cite{GH-Cam15} Theorem 5.11. This includes the two cases we studied in the previous sections.
\begin{itemize}
\item[-] The $p$-split case  corresponds to the datum $$(\tilde{A}_{3}, \omega^{\vee}_{2}, {\mathbb{S}}, id)$$ where $\sigma$ acts on the affine Dynkin diagram by the identity.
\item[-] The $p$-inert case corresponds to the datum $$(\tilde{A}_{3}, \omega^{\vee}_{2}, {\mathbb{S}}, \sigma_{0})$$
where $\sigma$ acts on the affine Dynkin diagram by fixing the nodes $0$ and $2$ and interchanging the nodes $1$ and $3$.
\end{itemize}

\subsection{Bruhat-Tits stratification of ADLV} Now we assume that $K$ is a maximal proper subset of $\tilde{\mathbb{S}}$ such that $\sigma(K)=K$. Consider the following set
$$\mathcal{J}=\{\Sigma\subset\tilde{\mathbb{S}}; \emptyset\neq \Sigma\text{ is }  \tau\sigma\text{-stable}\text{ and }d(v)=d(v^{\prime})\text{ for every } v,v^{\prime}\in \Sigma\}.$$ where $d(v)$ is the distance between $v$ and the unique vertex not in $K$.
In fact every $w\in \mathrm{EO}^{K}_{\sigma,\mathrm{cox}}(\mu)$ corresponds to a $\Sigma\in \mathcal{J}$ and we write $w$ as $w_{\Sigma}$.  If $(G,\mu, K)$ is of Coxeter type, for any $w_{\Sigma}\in\mathrm{EO}^{K}_{\sigma,\mathrm{cox}}(\mu)$, 
\begin{equation}
\label{EO-DL}X_{K,w_{\Sigma}}(b)=\bigcup_{i\in J_{b}/J_{b}\cap \breve{K}_{\tilde{\mathbb{S}}-\Sigma}}i X(w_{\Sigma}).
\end{equation} 
Here $\breve{K}_{\tilde{\mathbb{S}}-\Sigma}$ is the parahoric subgroup associated to the set $\tilde{\mathbb{S}}-\Sigma$ and $X(w_{\Sigma})$ is a classical Deligne-Lusztig variety defined by $$X(w_{\Sigma})=\{g\in \breve{K}_{\mathrm{supp}_{\sigma}(w_{\Sigma})}/\breve{I}; g^{-1}\tau\sigma(g)\in \breve{I}w\breve{I}\}$$ which is a Deligne-Lusztig variety attached to the maximal reductive quotient $G_{w}$ of the special fiber of $\breve{K}_{\tilde{\mathbb{S}}-\Sigma}$. Combine \eqref{ADLV-EO} and \eqref{EO-DL} we arrive at the following \emph{Bruhat-Tits stratification} of $X(\mu, b)_{K}$:
\begin{equation}
X(\mu, b)_{K}=\bigcup_{J_{b}/J_{b}\cap \ker(\kappa_{G})}\bigcup_{w_{\Sigma}\in  \mathrm{EO}^{K}_{\sigma,\mathrm{cox}}} \mathcal{X}^{\circ}_{\Sigma}
\end{equation}
where 
\begin{equation}\label{MSigma}
\mathcal{X}^{\circ}_{\Sigma}=\bigcup_{i\in J_{b}\cap\ker(\kappa_{G})/J_{b}\cap \breve{K}_{\tilde{\mathbb{S}}-\Sigma}}iX(w_{\Sigma}).
\end{equation}
Here the index set is related to the Bruhat-Tits building of $J_{b}$ in the following way. The group  $J_{b}\cap\ker(\kappa_{G})$ acts on the set of faces of type $\Sigma$ transitively and  $J_{b}\cap \breve{K}_{\tilde{\mathbb{S}}-\Sigma}$ is precisely the stabilizer of the face of type $\Sigma$ in the base alcove.

\subsubsection{$p$-split case} In the $p$-split case, we can compute
\begin{center}
\begin{tabular}{lllll}
$\Sigma$                                     & \{0,2\}                    & \{1,3\}   \\
$w_{\Sigma}  $                               & $\tau $     & $s_{0}\tau$   \\
$\tilde{\mathbb{S}}-\Sigma$             & \{1,3\}                  & \{0,2\}   \\
$\mathrm{supp}_{\sigma}(w_{\Sigma})$                             & $\emptyset $           & \{0,2\}.                                                
\end{tabular}
\end{center}

In this case the Deligne-Lusztig varieties $X(w_{\{0,2\}})$ is $0$-dimensional and $X(w_{\{1,3\}})$ is isomorphic the complement of $\FF_{p^{2}}$-points in $\PP^{1}(\FF)$. Therefore we have the following comparison between the Bruhat-Tits stratification for ADLV and Bruhat-Tits stratification for the Rapoport-Zink space studied in the $p$-split case. Recall that for $\calM$ the Bruhat-Tits stratification in Theorem \ref{main-p-split} is given by 
\begin{equation}\label{BT-stra-AFDL2}
\calM=\calM^{\circ }_{\{1,3\}}\sqcup \calM_{\{0,2\}}.
\end{equation}
\begin{itemize}
\item[-] $\mathcal{X}^{\circ}_{\{1,3\}}$ in \eqref{MSigma} can be identified with $\calM^{\circ}_{\{1,3\}}$ whose irreducible components are $X(w_{\{1,3\}})$;
\item[-] $\mathcal{X}_{\{0,2\}}$ in \eqref{MSigma} can be identified with $\calM_{\{0,2\}}$ and is the set of superspecial points.
\end{itemize}

\subsubsection{$p$-inert case} In the case $p$ is inert, we can compute
\begin{center}
\begin{tabular}{lllll}
$\Sigma$                                     & \{0,2\}                    & \{1,3\}                     & \{3\}                               & \{1\} \\
$w_{\Sigma}$                               & $\tau $     & $s_{0}\tau$ & $s_{0}s_{1}\tau$ &        $s_{0}s_{3}\tau$ \\
$\tilde{\mathbb{S}}-\Sigma$             & \{1,3\}                  & \{0,2\}                       & \{0,1,2\}                             & \{0,2,3\}\\
$\mathrm{supp}_{\sigma}(w_{\Sigma})$ & $\emptyset $& \{0,2\}                       & \{0,1,2\}                             & \{0,2,3\}.                            
\end{tabular}
\end{center}

In this case the Deligne-Lusztig varieties $X(w_{\{1\}})$ and $X(w_{\{3\}})$ agrees with $X_{B}(w_{2})$ in Theorem \ref{DL-stratification-inert}. The Deligne-Lusztig variety $X(w_{\{1,3\}})$ is isomorphic to $X_{B}(w_{1})$ and $X(w_{\{0,2\}})$ is $0$-dimensional and agrees with $X_{P_{1,3}}(1)$. Therefore we have the following comparison between the Bruhat-Tits stratification for ADLV and Bruhat-Tits stratification for the Rapoport-Zink space studied in the $p$-inert case. First recall that for $\calM$ the Brhat-Tits stratification in Theorem \ref{main-result-inert} is given by \begin{equation}\label{BT-stra-AFDL1}\calM=\calM^{\circ }_{\{1\}}\sqcup\calM^{\circ}_{\{3\}}\sqcup \calM^{\circ}_{\{1,3\}}\sqcup \calM_{\{0,2\}}.\end{equation}
\begin{itemize}
\item[-] $\mathcal{X}^{\circ}_{\{1\}}$ in \eqref{MSigma} is identified with $\calM^{\circ }_{\{1\}}$ in \eqref{BT-stra-AFDL1} whose irreducible components are $X_{B}(w_{2})$;
\item[-] $\mathcal{X}^{\circ}_{\{3\}}$ in \eqref{MSigma} is identified with $\calM^{\circ }_{\{3\}}$ in \eqref{BT-stra-AFDL1} whose irreducible components are $X_{B}(w_{2})$;
\item[-] $\mathcal{X}^{\circ}_{\{1, 3\}}$ in \eqref{MSigma}  is identified with $\calM^{\circ }_{\{1,3\}}$ in \eqref{BT-stra-AFDL1} whose irreducible components are $X_{B}(w_{1})$;
\item[-] $\mathcal{X}^{\circ}_{\{0,2\}}$ in \eqref{MSigma}  is identified with $\calM_{\{0,2\}}$ in \eqref{BT-stra-AFDL1} and is the set of superspecial points.
\end{itemize}

\end{document}